\numberwithin{equation}{section}
\theoremstyle{plain}
\newtheorem{Thm}{Theorem}[section]
\newtheorem*{Thm*}{Theorem}
\newtheorem{Lem}[Thm]{Lemma}
\newtheorem{Cor}[Thm]{Corollary}
\newtheorem{Prop}[Thm]{Proposition}
\theoremstyle{definition}
\newtheorem{Rem}[Thm]{Remark}
\newtheorem{?}[Thm]{Problem}
\newcommand{\pii}{\partial_i}
\newcommand{\pij}{\partial_{ij}}
\newcommand{\p}{\partial}
\newcommand{\pt}{\partial_t}
\newcommand{\px}{\partial_1}
\newcommand{\R}{\mathbb{R}}
\newcommand{\T}{\theta}
\newcommand{\e}{\varepsilon}
\newcommand{\E}{\mathcal{E}}
\newcommand{\D}{\mathbf{D}}
\newcommand{\Do}{\mathbf{D}_0}
\newcommand{\Dn}{\mathbf{D}_{\neq}}
\newcommand{\Et}{\tilde{\mathcal{E}}}
\newcommand{\Eb}{\bar{\mathcal{E}}}
\newcommand{\Tt}{\tilde{\theta}}
\newcommand{\Tb}{\bar{\theta}}
\newcommand{\Tm}{\mathring{\theta}}
\newcommand{\Ta}{\acute{\theta}}
\newcommand{\phia}{\acute{\phi}}
\newcommand{\psia}{\acute{\psi}}
\newcommand{\zetaa}{\acute{\zeta}}
\newcommand{\mr}{\mathring}
\newcommand{\uf}{u}
\newcommand{\Torus}{\mathbb{T}}
\newcommand{\dv}{\text{div}}
\newcommand{\rhob}{\bar{\rho}}
\newcommand{\rhot}{\tilde{\rho}}
\newcommand{\rhom}{\mathring{\rho}}
\newcommand{\rhoa}{\acute{\rho}}
\newcommand{\ub}{\bar{u}}
\newcommand{\ut}{\tilde{u}}
\newcommand{\um}{\mathring{u}}
\newcommand{\ua}{\acute{u}}
\newcommand{\mb}{\bar{m}}
\newcommand{\mt}{\tilde{m}}
\newcommand{\thetat}{\tilde{\theta}}
\newcommand{\thetab}{\bar{\theta}}
\newcommand{\thetam}{\mathring{\theta}}
\newcommand{\thetaa}{\acute{\theta}}
\newcommand{\phim}{\mathring{\phi}}
\newcommand{\psim}{\mathring{\psi}}
\newcommand{\zetam}{\mathring{\zeta}}
\newcommand{\ev}{\mathbf{e}}
\newcommand{\red}{\textcolor{red}}
\newcommand{\blue}{\textcolor{blue}}
\newcommand{\green}{\textcolor{green}}
\newcommand{\norm}[1]{\left\lVert#1\right\rVert}
\begin{document}
	
	\begin{titlepage}
		\title{Vanishing viscosity limit to the planar rarefaction wave with vacuum for 3-D full compressible Navier-Stokes equations with temperature-dependent transport coefficients 
		}
		
		\author[]{Meichen Hou $ ^{1}$}
		
		\author[]{{Lingjun Liu $ ^2$}\thanks{Corresponding author.} }
		
		\author[]{Shu Wang $ ^2$}
		
		\author[]{Lingda Xu $ ^{3
		}$}
		
			\affil{\footnotesize $ ^1 $ Center for nonlinear studies, School of Mathematics, Northwest University, Xi' an 710069, P.R.China,\\
			meichenhou@nwu.edu.cn (M. Hou). }
		\affil{\footnotesize $ ^2 $ School of Mathematics, Statistics and Mechanics, 
		Beijing University of Technology, Beijing 100124, P.R.China,\\
		lingjunliu@bjut.edu.cn (L. Liu), wangshu@bjut.edu.cn (S. Wang).}
\affil{\footnotesize $ ^3 $ Department of Applied Mathematics, The Hong Kong Polytechnic University, Hong Kong, China,
\\
lingda.xu@polyu.edu.hk 
 (L. Xu). }
\date{}
\end{titlepage}
\maketitle
\begin{abstract}
In this paper, we construct a family of global-in-time solutions of the 3-D full compressible Navier-Stokes (N-S) equations with temperature-dependent transport coefficients (including viscosity and heat-conductivity), and show that at arbitrary times {and arbitrary strength} this family of solutions converges to planar rarefaction waves connected to the vacuum as the viscosity vanishes in the sense of $L^\infty(\R^3)$.  
We consider the Cauchy problem in $\R^3$ with perturbations of the infinite global norm, particularly, periodic perturbations. To deal with the infinite oscillation, we construct a suitable ansatz carrying this periodic oscillation such that the difference between the solution and the ansatz belongs to some Sobolev space and thus the energy method is feasible. The novelty of this paper is that the viscosity and heat-conductivity are temperature-dependent and degeneracies caused by vacuum. Thus the a priori assumptions and two Gagliardo-Nirenberg type inequalities are essentially used. Next, more careful energy estimates are carried out in this paper, by studying the zero and non-zero modes of the solutions, we obtain not only the convergence rate concerning the viscosity and heat conductivity coefficients but also the exponential time decay rate for the non-zero mode. 

\end{abstract}

{\bf{Keywords.}} Multi-dimensional Navier-Stokes equations, planar rarefaction wave, vanishing viscosity limit, periodic perturbation, vacuum.

{\bf{Mathematics Subject Classification.}} 
35Q30, 35Q31, 35Q35, 76N06, 76N10.
\section{Introduction and Main Result}

We study the vanishing viscosity 
 limit of the 3-D full compressible 
  Navier-Stokes (N-S) 
 equations, which read
\begin{align}\label{NS}
\begin{cases}
	\rho_{t}+\dv (\rho  u)=0, \\
	(\rho  u)_{t}+\dv(\rho  u \otimes  u)+\nabla p=\varepsilon\dv \mathcal{T},\ \ \ \ \ \ \ \ \ \ \ \ \ \ \ \ \ \ (t,x)\in\R_{+}\times\R^3, \\
	(\rho E)_{t}+\dv(\rho E  u+p  u)=\varepsilon\dv(\kappa(\theta) \nabla \theta)+\varepsilon\dv( u \mathcal{T}),
\end{cases}
\end{align}
where $t \geq 0$ is the time variable and $ x
=\left( {x_1}
, x'\right)=\left(x_{1}, x_{2}, x_{3}\right) \in \mathbb{R}^{3}$ is the spatial variable. The functions $\rho,\ p,\ \theta,\ e:\ \R_{+}\times\R^3\rightarrow\R_{+}$ are all related to the space $x$ and time $t$, represent the fluid density, pressure,  absolute temperature, and internal energy, respectively. $ u=$ $\left(u_{1}, u_{2}, u_{3}\right)^{t}:\ \R_{+}\times\R^3\rightarrow\R^3$ is the fluid velocity, and $E:=e+\frac{1}{2}| u|^{2}$ denotes the specific total energy.
The viscous stress tensor $\mathcal{T}$ is 
\begin{align*}
\mathcal{T}=2\mu(\theta)\mathbb{D}( u)+\lambda(\theta)\dv u\mathbb{I},
\end{align*}
where $\mathbb{D}( u):=\frac{\nabla  u+(\nabla  u)^{t}}{2}\in\R^{3\times3}$ represents the deformation tensor, $\mathbb{I}$ is the identity matrix. 
 $\varepsilon\kappa(\theta)>0$ denotes the heat-conductivity 
 for the shear and bulk viscosity coefficients of the fluids, with $\varepsilon>0$ being positive constant, and the viscosity coefficients $\e\mu(\theta)$ and $\e\lambda(\theta)$ satisfying
$$
\mu(\theta)>0, \quad  \mu(\theta)+\lambda(\theta) \geq 0,
$$
\[
\mu(\theta)=\mu_1\theta^\alpha,\quad\lambda(\theta)=\lambda_1\theta^{\alpha},\quad\kappa(\theta)=\kappa_1\theta^{\alpha} ,
\]
{where $\alpha$, $\mu_1,\lambda_1
$ and $\kappa_1$ are some positive constants.} 
For the ideal polytropic gas, the pressure $p$ and the internal energy $e$ can be determined by 
\begin{align}\label{1}
p=R\rho\theta=A\rho^\gamma \exp(\frac{\gamma-1}{R}S),\ \ \ \ \ \ e=\frac{R}{\gamma-1}\theta,
\end{align}
where fluid constants $A>0$ and $R>0$ are normalized to
$A=R=\gamma-1$, the adiabatic exponent $\gamma>1$. 
The second law of thermodynamics
\begin{align}\label{2}
de=\theta dS+\frac{p}{\rho^2}d\rho,
\end{align}
where $S=S(t,x)$ denotes the entropy of the gas. 

 {The viscosity terms are vanishing as the parameter $\varepsilon\rightarrow0$,} 
N-S equations \eqref{NS} 
 converge to the 3-D inviscid Euler equations, 
\begin{align}\label{EE}
\left\{\begin{aligned}
	&\rho_{t}+\dv (\rho  u)=0, \\
	&(\rho  u)_{t}+\dv (\rho  u \otimes  u)+\nabla p=0, \\
	&{\big[\rho\big(\theta+\frac{| u|^{2}}{2}\big)\big]_{t}+\dv \left[\rho  u\left(\theta+\frac{| u|^{2}}{2}\right)+p  u\right]}=0. \\
\end{aligned}\right.
\end{align}
A planar centered rarefaction wave 
\begin{align}\label{center}
(\rho^r,  u^r, \theta^r)(t,x)=(\rho^r, u_1^r, 0, 0, \theta^r)(t,x_1)
\end{align}
 is a weak entropy solution to this hyperbolic system \cref{EE}, 
  $(\rho^r, u_1^r,\T^r)$ solves the following 1-D Riemann problem,
\begin{align}\label{EE1}
\begin{cases}
	\pt\rho+\px\left(\rho u_{1}\right)=0, &  \\
	\pt\left(\rho u_{1}\right)+\px\left(\rho u_{1}^{2}+p\right)=0, &{x_1} \in \mathbb{R},\  t>0, \\
	\pt(\rho E)+\px\left(\rho E u_{1}+p u_{1}\right)=0, &
\end{cases}
\end{align}
with the initial data:
\begin{align}\label{EE0}
\left(\rho, u_{1}, \theta\right)\left(0, x\right)=\left(\rho_{0}^{r}, u_{10}^{r}, \theta_{0}^{r}\right)\left(x\right)=\left\{\begin{aligned}
	&\left(\rho_{-}, u_{1-}, \theta_{-}\right)=(0, u_{1-}, 0), & {x_1}<0, \\
	&\left(\rho_{+}, u_{1+}, \theta_{+}\right), \ \ \  \ \ \ &{x_1}>0,
\end{aligned}\right.
\end{align}
where $\px=\p_{x_1}$ and $(\rho_{\pm},u_{1\pm},\theta_{\pm})$ are two constant states with $\rho_{+}>0, \theta_{+}>0$. 
The system \cref{EE1} has three distinct eigenvalues
\begin{align}\label{eig}
\lambda_{i}\left(\rho, u_{1}, S\right)=u_{1}+(-1)^{\frac{i+1}{2}} \sqrt{p_{\rho}
(\rho, S)} \ \ \text{for } \ i=1,3, \quad \lambda_{2}\left(\rho, u_{1}, S\right)=u_{1}.
\end{align}
The $i$-Riemann invariant $R_{i,j} \ (j=1,2)$ is given by 
\begin{align}\label{RI}
R_{i,1}=u_{1}+(-1)^{\frac{i-1}{2}} \int^{\rho} \frac{\sqrt{p_{z}
(z, S)}}{z} d z, \quad R_{i,2}=S,
\end{align}
which are 
 constants 
  along $i$-right eigenvectors. And the $i$-rarefaction wave curve in the phase space $\left(\rho, u_{1}, \theta\right)$ with $\rho>0$ and $\theta>0$ can be defined by
\begin{align}\label{rc}
R_{i}\left(\rho_{+}, u_{1+}, \theta_{+}\right):=\left\{\left(\rho, u_{1}, \theta\right) | \px\lambda_{i }>0, R_{i,j}\left(\rho, u_{1}, S\right)=R_{i,j}\left(\rho_{+}, u_{1+}, S_{+}\right), j=1,2\right\}.
\end{align}

Singularity is an important content in the study of wave phenomena to compressible Euler equations, including discontinuous solutions such as shock wave and contact discontinuity, 
continuous but not smooth solutions such as rarefaction wave, 
 degenerate solutions such as vacuum. In particular, 
 these strange phenomena often occur together (such as fluids near supersonic aircraft), which makes the study more difficult.
 As pointed out by Liu-Smoller \cite{LS}, only the rarefaction wave can be connected to the vacuum states.  In this paper, we study the vanishing viscosity 
  limit to the planar rarefaction wave for the multi-dimensional full compressible 
  N-S equations \eqref{NS} with vacuum. 

 For definiteness, we consider 3-rarefaction wave $ (\rho^r,u_1^r,\theta^r)(\frac{x_1}{t}) $ 
 connecting the vacuum state $(0,u_{1-},0)$ to $(\rho_+, u_{1+}, \theta_+)$, the velocity satisfies $u_{1-}=R_{3,1}\left(\rho_+, u_{1 +}, \theta_+\right)$. 
 $ (\rho^r,u_1^r,\theta^r)(\frac{x_1}{t}) $ is a self-similar solution of the 1-d Euler equations \cref{EE1} 
  and solved explicitly through
\begin{align}\label{nrw2}
\left\{\begin{aligned}
	&w^{r}\left(\frac{x_1}{t}\right)=\lambda_{3}\left(\rho^{r}, u_{1}^{r}, \theta^{r}\right)\left(\frac{x_1}{t}\right),\\
	&R_{3,1}\left(\rho^{r}, u_{1}^{r}, \theta^{r}\right)\left(\frac{x_1}{t}\right)=R_{3,1}\left(\rho_+, u_{1 +}, \theta_+\right)=R_{3,1}\left(0, u_{1 -}, 0\right), \\ 
	&R_{3,2}=S^r=S_+:=-(\gamma-1)\log \rho_++\log\theta_+,
\end{aligned}\right.
\end{align}
where 
\begin{equation}\label{nrw3}
w^{r}
\left(\frac{x_1}{t}\right)=
\begin{cases}
	\rho^r(\frac{x_1}{t})\equiv 0,  &\frac{x_1}{t}<\lambda_{3}(0, u_{1-}, 0)=u_{1-},\\  
	\frac{x_1}{t}, &u_-\leq \frac{x_1}{t} \leq \lambda_{3}(\rho_+, u_{1+}, \theta_+), \\
	\lambda_{3}(\rho_+, u_{1+}, \theta_+), &\frac{x_1}{t}>\lambda_{3}(\rho_+, u_{1+}, \theta_+).
\end{cases}
\end{equation}
We define the momentum $m^r$ and the total internal energy $n^r$ of a 3-rarefaction wave by 
\begin{align}\label{ss2}
m^r(\frac{x_1}{t}):=
\begin{cases}
\rho^r(\frac{x_1}{t})u^r(\frac{x_1}{t}), \quad \ \ \ & \text{if} \ \  \rho^r>0,\\
0, \quad \ \ \ & \text{if}\ \  \rho^r=0,
\end{cases}
\end{align}
and 
\begin{align}\label{ss1}
n^r(\frac{x_1}{t}):=
\begin{cases}
\rho^r(\frac{x_1}{t})\theta^r(\frac{x_1}{t}), \quad \ \ \ & \text{if} \ \  \rho^r>0,\\
0, \quad \ \ \ & \text{if}\ \  \rho^r=0.
\end{cases}
\end{align}
In this paper, we construct a sequence of global-in-time solutions $(\rho^\e,m^\e=\rho^\e u^\e, n^\e=\rho^\e\theta^\e)(t,x
)$ to the N-S equations \eqref{NS}, which converge to the 3-rarefaction wave $(\rho^r,m^r,n^r)(\frac{x_1}{t})$ as $\e$ tends to zero. Since the initial data \eqref{init0} is dependent on the viscosity for the compressible N-S equations, the effects of initial layers will be ignored.

Many efforts have been made on the vanishing viscosity 
 limit of the compressible 
fluid with basic wave patterns without vacuum in $\R$, such as \cite{BianchiniBressan2005,GoodmanXin1992,YuSH1999} for the hyperbolic conservation laws with artificial viscosity, 
\cite{ChenPerepelitsa2010,HoffLiu1989, Xin1993} for the isentropic compressible N-S equations (where the conservation of energy in \eqref{NS} is neglected), 
\cite{HuangWangYang2012,HuangWangWangYang2013,JiangSNiSun2006,XinZeng2010} for the full compressible N-S equations. For multi-dimensional case, by introducing hyperbolic waves, Li-Wang-Wang \cite{LWW2020, LWW2022} studied the vanishing viscosity limit to the planar rarefaction wave for 2-D isentropic N-S equations in $[0,T]\times\R\times\mathbb{T}$ and 3-D N-S-Fourier equations in $[0,T]\times\R\times\mathbb{T}^2$ ($\mathbb{T}$ is unit flat torus, $T<\infty$), respectively, with constant viscosity. 
Meanwhile, the vanishing viscosity limit of 2-D full compressible N-S equations was obtained by Li-Li \cite{LiLi2020JDE}.

 Since the vacuum states may occur in the Riemann solution instantaneously as $t>0$ even if the initial Riemann data are non-vacuum states, the vacuum states are important and make the physical system degenerate, which causes some essential analytical difficulties. 
There are some mathematical results about  
the vanishing viscosity limit to the rarefaction wave with the vacuum. 
Huang-Li-Wang \cite{HLW} considered the zero dissipation 
 limit of 1-D isentropic 
 N-S equations with constant viscosity to the rarefaction wave with one-side vacuum state to the corresponding compressible Euler equations. 
Then Li-Wang-Wang \cite{LWW} extended to 
 the 1-D full compressible N-S equations with temperature-dependent viscosity and heat conduction coefficients.  And the large time 
  behavior toward rarefaction wave with vacuum in $\R$ can refer to \cite{JWX,P}. 
Recently, Chen-Chen-Zhu \cite{CCZ2022} focused on the lower power-law case and finite mass to show the vanishing viscosity limit of the 3-D barotropic compressible N-S equations with degenerate viscosity and far-field vacuum. In addition, by similar construction of hyperbolic wave as \cite{LWW2020,LWW2022}, Bian-Wang-Xie \cite{BWX2023} investigated the vanishing viscosity limit to planar rarefaction wave with vacuum for the compressible isentropic N-S equations in $[0,T]\times\R\times\mathbb{T}^2$, with constant viscosity.

For compressible viscous flow away from a vacuum, the local existence and uniqueness of classical solution can be traced back to \cite{Nash1962, Serrin1959}. If the initial density does not have a positive lower bound, Cho-CHoe-Kim \cite{ChoCHoeKim2004} established the local well-posedness of strong solutions with vacuum in $\R^3$ by imposing initially a compatibility condition. Huang-Li-Xin \cite{HuangLiXin2012} extended this solution to be a global one with small energy for 3-D barotropic flow by the uniform estimate of the upper bound of the density. However, for degenerate viscous flow, the arguments above-mentioned cannot work. There are some significant results on weak solutions 
by employing the B-D entropy, which introduced in Bresch-Desjardins \cite{BreschDesjardins2003,BreschDesjardins2004}, such as \cite{BreschDesjardinsLin2003,LiXinarXiv, MelletVasseur2007, VasseurYu2016}. 
Then the 3-D global well-posedness of classical solutions in some homogeneous Sobolev spaces was established by Xin-Zhu \cite{XinZhu2021} 
 under initial smallness assumptions. In fact, the global well-posedness of weak solutions to many fundamental equations remains open for other cases involving vacuum.
In the present paper, we study the multi-dimensional viscosity vanishing problem in global time, 
and the setting of Riemann initial data has an initial discontinuity and vacuum states, 
which propagate with the development of solution.

It's worth noting that the viscosity vanishing limit to planar rarefaction wave with space-periodic perturbations is also full of challenge and importance in the theories of conservation laws, in which some resonant phenomena
may happen for the non-isentropic Euler equations, refer to \cite{MajdaRosales1984}. Some important progress has been made in 
 the stability of wave patterns with space-periodic perturbations, such as 
\cite{GlimmLax1970,Lax1957,XinYuanYuan2019,XinYuanYuan2021,YuanYuan2019} for 1-D conservation laws. Different from the perturbations in the previous works \cite{LW2018,LWW2018}, Huang-Yuan \cite{HY2020} studied the stability of planar rarefaction waves under multi-dimensional periodic perturbations for the 
 scalar viscous conservation laws and obtained the time-decay rates. Then Huang-Xu-Yuan \cite{HuangXuYuan2022} continues to show the nonlinear stability for planar rarefaction wave of 3-D isentropic N-S equations. 
However, as far as we know, there are no any results on the viscosity vanishing limit to the planar rarefaction wave with 
multi-dimensional periodic perturbations and even in the 1-D case. 


We consider the Cauchy problem in $\R^3$, where the initial perturbations 
  are periodic and have infinite oscillations at the far field and are not integrable along any direction of space. 
  And we obtain global-in-time solutions that converge to the planar rarefaction wave with vacuum at arbitrary times. We construct a suitable ansatz $(\tilde\rho,\tilde m,\tilde\E)$ to carry the same oscillations as those of solution $(\rho,m,\E)$ at the far field in the normal direction of the wave propagation, i.e. the difference $(\rho-\tilde\rho,m-\tilde m,\E-\tilde\E)(t,x)\rightarrow0$ as $|x_1|\rightarrow+\infty$. The viscosities are constants and the states are far away from a vacuum in \cite{HuangXuYuan2022}, but temperature-dependent viscosity and rarefaction wave with one-side vacuum are considered in this paper. 
    To overcome the degeneracies, we cut off the rarefaction wave from the vacuum with $\nu$ dependent on viscosity $\varepsilon$, and then use the viscosity to control the degeneracies. Thus, more accurate energy estimates are needed and two Gagliardo-Nirenberg (G-N) type inequalities are essentially used, which are generalized results of  \cite{HY2020}. Furthermore,  time exponential decay rates for the non-zero mode of solutions are obtained. 

Now we are ready to formulate the main result. Since the centered rarefaction wave \cref{center} is only Lipschitz continuous, we should construct a smooth profile as in \cite{MatsumuraNishihara1986}, moreover, the convergence rate with respect to $\varepsilon$ is also considered
. Inspired by \eqref{nrw2}, let $(\rho^{\bar r},u^{\bar r}_1,\T^{\bar r})(t,x_1)$ be the unique smooth solution solved by 
\begin{align}\label{nrw4}
\left\{\begin{aligned}
	&w^{\bar r}\left(t,x_1\right)=\lambda_{3}\left(\rho^{\bar r}, u_{1}^{\bar r}, \theta^{\bar r}\right)\left(t,x_1\right),\\
	&R_{3,1}\left(\rho^{\bar r}, u_{1}^{\bar r}, \theta^{\bar r}\right)\left(t,x_1\right)=R_{3,1}\left(\rho_+, u_{1 +}, \theta_+\right)=R_{3,1}\left(0, u_{1 -}, 0\right), \\ 
	&R_{3,2}=S^{\bar r}=S_+:=-(\gamma-1)\log \rho_++\log\theta_+,
\end{aligned}\right.
\end{align}
where $w^{\bar r}(t,x_1)$ is the unique smooth solution to the problem, 
\begin{align}\label{bes}
\left\{\begin{aligned}
	&\pt w^{\bar{r}}+w^{\bar{r}} \px w^{\bar{r}}=0, \\
	&w^{\bar{r}}\left(0, {x_1}
	\right)=w^{\bar{r}}_\delta({x_1})=w^{\bar{r}}(\frac{{x_1}}{\delta})=\frac{w_{+}+w_{-}}{2}+\frac{w_{+}-w_{-}}{2} \tanh\frac{{x_1}}{\delta},
\end{aligned}\right.
\end{align}
where $\delta>0$ is a small parameter with respect to $\varepsilon$ and will be given later, the 
function $\tanh x_1=\frac{e^{x_1}-e^{-x_1}}{e^{x_1}+e^{-x_1}}$. Note that this smooth rarefaction wave is time-asymptotically equivalent to the centered rarefaction wave \eqref{center} in the $L^\infty(\R)$ space, see \cite{HuangXuYuan2022,MatsumuraNishihara1986}.
However, in order to overcome the degeneracies caused by the vacuum, we need to construct the cut-off 3-rarefaction wave to obtain a new smooth planar rarefaction wave $(\bar\rho,\bar u,\bar\T)(t,x_1)$, 
 which will be described in section 2.

To study the vanishing viscosity limit to planar rarefaction wave with vacuum
, we prescribe the initial data of \eqref{NS} as 
\begin{align}\label{inih}
\begin{aligned}
(\rho_0,m_0,{\E_0}
){(x)}=(\rhob_0,\mb_0,\Eb_0){(x)}+(V_0,W_0,Z_0){(x)}, \ \ 
(V_0,W_0,Z_0){(x)}\in H^5(\Torus_\e^3), \  \Torus_\e^3=[0,\e^b]^3, %
\end{aligned}
\end{align}
for positive constant $ b\ge1$ to be determined, where 
\begin{align}\label{ini}
(\rho,m,\E)(t=0, x)=(\rho_0,m_0,\E_0)(x),
\end{align}
\begin{align}
m:=\rho u=(\rho u_1,\rho u_2,\rho u_3),\qquad \E:=\rho(\frac{R}{\gamma-1}\theta+\frac{1}{2}|u|^2),
\end{align}
 the perturbation $(V_0,W_0,Z_0)(x)=(V_0,W_{01},W_{02},W_{03},Z_0)(x)\in\R\times\R^3\times\R$ is periodic over the 3-D torus $\Torus_\e^3$, satisfying  
\begin{align}\label{cd-zeroav}
\int_{\Torus_\e^3}(V_0,W_0,Z_0){(x)}dx=0.
\end{align}
Then there exists a constant $\eta_0:=O(\e^{20})>0$ such that for
\begin{align}\label{eqe095} 
\eta:=\left\|(V_{0}, {W}_{0},Z_0)(x)\right\|_{H^{5}\left(\mathbb{T}_{\e}^{3}\right)} \leqslant \eta_{0}.
\end{align}  

\begin{Thm}\label{mt}
Denote the planar rarefaction wave solution with one-side vacuum to the 3-d compressible Euler system \cref{EE} as $\left(\rho^{r},  m^r=\rho^{r}u^{r}, n^r=\rho^{r}\theta^{r}\right)\left(\frac{x_{1}}{t}\right)$
, which is defined in 
\cref{nrw2}-\eqref{ss1}. Then there exists a constant $\varepsilon_{0}>0$ such that for any $\varepsilon \in\left(0, \varepsilon_{0}\right)$, the N-S equations \cref{NS} 
admits a family of global-in-time smooth solutions $\left(\rho^{\varepsilon},  m^{\varepsilon}=\rho^{\varepsilon}u^{\varepsilon}, n^{\varepsilon}=\rho^{\varepsilon}\theta^{\varepsilon}\right)(t, x)$
, which satisfies the following
\begin{align}
	\begin{cases}
		\left(\rho^{\varepsilon}-\rho^{r},  m^{\varepsilon}- m^{r}, n^{\varepsilon}-n^{r}\right) 
\in C
		\left(0,\infty; L^{2}(\R^{3})\right), \\
		\left(\nabla \rho^{\varepsilon}, \nabla  m^{\varepsilon}, \nabla n^{\varepsilon}\right) \in C
		\left(0,\infty ; H^{1}(\R^{3})\right), \\
		\left(\nabla^{3}  u^{\varepsilon}, \nabla^{3} \theta^{\varepsilon}\right) 
\in L^{2}\left(0,\infty; L^{2}(\R^{3})\right),
	\end{cases}
\end{align}
and for any small constant $h>0$, there exists a constant $C_{h}>0$ independent of $\varepsilon,\eta,t$, such that
\begin{align}\label{eqa}
\sup _{h \leq t}\left\|\left(\rho^{\varepsilon},  m^{\varepsilon}, n^{\varepsilon}\right)(t, x)-\left(\rho^{r},  m_1^{r},0,0, n^{r}\right)\left(\frac{x_{1}}{t}\right)\right\|_{L^{\infty}(\R^{3})}
	 \leq C_{h}\varepsilon^{ Za}|\log\varepsilon|
	,
\end{align}
where the positive constants $a$ and $Z$ are 
\begin{align}\label{eqda}
	{a=
	\frac{(\alpha+1)
	}{9\gamma(\alpha+2)
	-3},\ \ \ \ \ 
	 Z=
\frac{1}{4
(\alpha+1)\gamma}.}
\end{align}
\end{Thm}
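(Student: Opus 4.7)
The plan is a combination of (i) careful construction of a smooth, oscillation-carrying ansatz that removes both the vacuum degeneracy and the non-integrable periodic perturbation from the error, and (ii) a global-in-time energy method based on decomposition into zero and non-zero Fourier modes, closed by a continuation argument. The overall skeleton follows the strategy of \cite{HuangXuYuan2022,LWW}, but a new layer of care is needed to accommodate simultaneously the temperature-dependent coefficients $\mu(\theta)=\mu_1\theta^\alpha$, $\kappa(\theta)=\kappa_1\theta^\alpha$ and the one-sided vacuum in the target rarefaction wave.

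\textbf{Step 1 (Construction of the ansatz).} First I would replace the centered wave \eqref{nrw2}--\eqref{nrw3} by the smoothed wave $(\rho^{\bar r},u_1^{\bar r},\theta^{\bar r})$ defined via \eqref{nrw4}--\eqref{bes}, and then cut it off away from the vacuum at a level $\nu=\nu(\varepsilon)$: on the set where $\rho^{\bar r}\le \nu$ I would splice in a strictly positive profile $(\bar\rho,\bar u,\bar\theta)$ so that $\bar\rho\ge c\,\nu$ on $\R^3$ while $(\bar\rho-\rho^{\bar r},\bar m-m^{\bar r},\bar n-n^{\bar r})$ remains $L^\infty$-small with a quantitative rate in $\varepsilon$ (after fixing $\delta=\delta(\varepsilon)$). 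The periodic oscillation is then added by letting $(\tilde\rho,\tilde m,\tilde\E)$ solve \eqref{NS} with initial data $(\bar\rho+V_0,\bar m+W_0,\bar\E+Z_0)$, i.e.\ the cut-off rarefaction profile plus the periodic perturbation, and I would verify that $(\tilde\rho-\bar\rho,\tilde m-\bar m,\tilde\E-\bar\E)$ stays periodic in $\Torus_\varepsilon^3$ with mean zero by \eqref{cd-zeroav}. Thus the perturbation $(\phi,\psi,\zeta):=(\rho-\tilde\rho,m-\tilde m,\E-\tilde\E)$ lies in $H^2(\R^3)$.

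\textbf{Step 2 (Local existence and the a priori assumption).} By Cho--Choe--Kim type local theory the system \eqref{NS} admits a local smooth solution with a strictly positive density (guaranteed by $\bar\rho\ge c\nu$ in the ansatz). I would impose the standard a priori assumption
\begin{equation*}
\sup_{0\le t\le T}\norm{(\phi,\psi,\zeta)(t)}_{H^2(\R^3)}\le \chi(\varepsilon),
\end{equation*}
with $\chi(\varepsilon)$ a small power of $\varepsilon$ chosen so that $\rho\ge \tfrac12 c\nu$ persists, and so that the nonlinear terms can be absorbed by the dissipation.

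\textbf{Step 3 (Mode decomposition and energy estimates).} This is the core of the proof and the main technical obstacle. I would split $(\phi,\psi,\zeta)$ into its zero mode (mean over $\Torus_\varepsilon^3$ in the transverse directions, depending on $(t,x_1)$) and non-zero mode. For the zero mode the problem is essentially 1-D and is controlled by the rarefaction-wave energy estimates à la \cite{MatsumuraNishihara1986, LWW}, giving a polynomial-in-$\varepsilon$ convergence rate. For the non-zero mode the key is a Poincaré inequality on $\Torus_\varepsilon^3$, which produces an exponential time-decay rate once the dissipation beats the loss coming from the $\theta^\alpha$ degeneracy near vacuum. The temperature dependence forces me to rewrite the viscous terms as $\varepsilon\bar\theta^\alpha \dv\mathbb{D}(\psi)+(\hbox{commutator})$ and to estimate the commutator using two Gagliardo--Nirenberg interpolation inequalities of the type in \cite{HY2020}: these are the non-trivial ingredient that allows third-order derivatives of $u$ and $\theta$ to be controlled while $\rho$ has only degenerate weight $\rho\sim\nu$ in the cut-off zone. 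All error terms generated by the ansatz construction (smoothing error, cut-off error, and the source introduced by $(\tilde\rho,\tilde m,\tilde\E)$ not solving \eqref{NS} exactly) are reducible to controllable quantities once one chooses $\nu$ and $\delta$ as small powers of $\varepsilon$ balanced against the viscous dissipation; this balancing dictates the exponents \eqref{eqda}.

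\textbf{Step 4 (Global continuation and the $L^\infty$ rate).} Once the a priori estimate is closed, the continuation principle gives the global smooth solution asserted in the regularity statement. For the final $L^\infty$ bound \eqref{eqa}, I would use Sobolev embedding on $\R^3$ applied to the zero mode in $x_1$ (plus the exponential decay of the non-zero mode) to transfer the $H^2$ energy bound into an $L^\infty$ bound with a factor $\varepsilon^{Za}$, picking up the $|\log\varepsilon|$ from the transition between the smoothed rarefaction wave and the original centered one exactly as in \cite{MatsumuraNishihara1986,HuangXuYuan2022}. Finally the triangle inequality $\|(\rho^\varepsilon,m^\varepsilon,n^\varepsilon)-(\rho^r,m_1^r,0,0,n^r)\|_{L^\infty}\le \|(\phi,\psi,\zeta)\|_{L^\infty}+\|(\tilde\rho,\tilde m,\tilde\E)-(\bar\rho,\bar m,\bar\E)\|_{L^\infty}+\|(\bar\rho,\bar m,\bar\E)-(\rho^r,m^r,n^r)\|_{L^\infty}$ yields \eqref{eqa}. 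The principal obstacle is Step 3: balancing the viscous dissipation, which is itself degenerate like $\theta^\alpha\sim\nu^\alpha$ near vacuum, against the nonlinear error terms and the cut-off error, so that the exponents $a$ and $Z$ in \eqref{eqda} come out positive and compatible with the a priori smallness.
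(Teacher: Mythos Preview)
Your overall architecture is right, but two points diverge substantially from the paper's actual argument, and the first is a genuine gap.

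\textbf{Ansatz construction.} Your Step~1 proposes taking $(\tilde\rho,\tilde m,\tilde\E)$ to be the \emph{solution} of \eqref{NS} with initial data $(\bar\rho+V_0,\bar m+W_0,\bar\E+Z_0)$. As written this cannot work: the background profile $\bar\rho$ is not periodic, so this Cauchy problem is essentially the original one; you have no way to verify that the resulting $(\tilde\rho,\tilde m,\tilde\E)$ carries exactly the periodic oscillation at $|x_1|\to\pm\infty$, hence no way to guarantee $(\phi,\psi,\zeta)\in H^2$. The paper's ansatz is built \emph{algebraically}, not dynamically: one first solves the genuinely periodic problems \eqref{initial} with constant backgrounds $(\rho_\pm,m_\pm,\E_\pm)+(V_0,W_0,Z_0)$ to obtain $(\bar\rho_\pm,\bar m_\pm,\bar\E_\pm)$ (Lemma~\ref{lem-periodic-solution}), and then \emph{interpolates} between these using the smooth cut-off rarefaction wave as weight, cf.\ \eqref{eq-ansatz}--\eqref{eq-ansatzs1}. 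The ansatz does \emph{not} solve \eqref{NS}; the source terms $e_0,\ev,e_4$ in \eqref{ae} are then controlled by Lemma~\ref{er}. This interpolation is precisely the device that forces $(\phi,\psi,\zeta)\to 0$ as $|x_1|\to\pm\infty$.

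\textbf{Mode decomposition is not used for Theorem~\ref{mt}.} Your Step~3 invokes the zero/non-zero mode splitting and a Poincar\'e inequality on the torus. In the paper that machinery appears only in Section~4, for the exponential decay of the non-zero mode in Theorem~\ref{mt1}. The a priori estimate behind Theorem~\ref{mt} (Lemma~\ref{ape1}) is a \emph{direct} weighted energy argument on $(\phi,\psi,\zeta)$ in the scaled variables \eqref{eq11}: a relative-entropy estimate (Step~1), the standard cross-term trick $\psi\cdot\nabla\phi$ for $\|\nabla\phi\|$ (Step~2), and higher-order estimates (Steps~3--4), all carrying the degenerate weights $\bar\rho^{\gamma-2},\bar\rho,\bar\rho^{2-\gamma},\bar\theta^\alpha,\bar\theta^{\alpha-1}$ that track the vacuum. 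The G--N inequalities of Lemma~\ref{Lem-GN} on $\R\times\Torus_\varepsilon^2$ are used to close the $L^\infty$ a priori assumptions \eqref{eq09_1}--\eqref{eq09_2} (see \eqref{eqe0021}--\eqref{eqe0022}), but no spectral-gap argument enters. Your decomposition route is not wrong in principle, but it conflates the proofs of the two theorems and is unnecessary for the rate \eqref{eqa}. One minor correction: the $|\log\varepsilon|$ in \eqref{eqa} comes primarily from the cut-off choice $\nu=\varepsilon^{Za}|\log\varepsilon|$ in \eqref{eq091} together with Lemma~\ref{sr1}, not only from the smoothing error of Lemma~\ref{sr}(iii).
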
 

\begin{Thm}\label{mt1}
Assume the perturbation $(V_0,W_0,Z_0)$ in \eqref{inih} is periodic on torus $\mathbb T^3_\e$, satisfying \eqref{cd-zeroav} and \eqref{eqe095}.  The N-S equations \cref{NS}, \eqref{inih} admits a family of global-in-time smooth solutions $(\rho^\e,u^\e,\theta^\e)(x,t)$, which is periodic in $x'\in\Torus_\e^2$ and satisfies that 
\begin{align}
	\begin{cases}
	\left(\rho^{\varepsilon}-\rhot,  u^{\varepsilon}- \tilde u, \theta^{\varepsilon}-\thetat\right) 
\in C
		\left(0,+\infty; H^{3}(\Omega_\e)\right), \\
		\nabla(\rho^\e-\rhot)\in L^2\left((0,+\infty;H^2(\Omega_\e)\right),\\
		\nabla\big(u^\e-\tilde u,,\T^\e-\Tt\big)\in L^2\left(0,+\infty; H^3(\Omega_\e)\right),
	\end{cases}
\end{align}
where $\Omega_\e:=\R\times\Torus^2_\e$ and $(\rhot,\tilde u,\Tt)$ is the ansatz given by \eqref{eq-ansatz}-\eqref{eq-ansatzs1}. Furthermore, it holds that 
\begin{align}\label{time0}
\|(\rho^\e,u^\e,\T^\e)(t,x)-(\rho^r,u_1^r,0,0,\T^r)(\frac{x_1}{t})\|_{W^{1.\infty}(\R^3)}\rightarrow 0,\ \ \text{as} \ \ t\rightarrow +\infty.
\end{align}
 The non-zero mode of the solution is 
\begin{align}\label{nonzero}
(\rhoa,\acute u,\Ta)(t,x)=(\rho,u,\T)(t,x)-\int_{{\mathbb T}_\e^2}(\rho,u,\T)(t,x_1,x')dx',\ \ x\in\R^3,\ \ t>0,
\end{align}
 there exists a constant $\eta\le\eta_0:=O(\e^{20})>0$ such that 
\begin{align}\label{nonzero1}
 \|(\rhoa,\acute u,\Ta)(t,\cdot)\|_{L^\infty(\R^3)}\le O(\e^{\frac{1}{3}})
 e^{-\e^{n_2} t},\ \ \forall t>0,
 \end{align}
 where constant $n_2>1$ and $C>0$ is independent of $\varepsilon,\eta
 ,t$.
\end{Thm}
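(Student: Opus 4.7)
The plan is to rewrite \eqref{NS},\eqref{inih} as a Cauchy problem for the perturbation
\[
(V,W,Z)(t,x):=(\rho,m,\E)(t,x)-(\rhot,\mt,\Et)(t,x),
\]
where the ansatz $(\rhot,\mt,\Et)$ of \eqref{eq-ansatz}--\eqref{eq-ansatzs1} is obtained by adding to the smooth cut-off planar rarefaction wave $(\rhob,\mb,\Eb)$ an $x'$-independent periodic corrector designed to absorb the far-field oscillation of $(V_0,W_0,Z_0)$ in the $x_1$ direction. This makes $(V,W,Z)(0,\cdot)$ lie in $H^3(\Omega_\e)$ with norm controlled by $\eta$, even though $(V_0,W_0,Z_0)$ itself is only periodic on $\Torus_\e^3$. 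Subtracting the ansatz yields a quasilinear hyperbolic--parabolic system for $(V,W,Z)$ with an inhomogeneity that measures how far $(\rhot,\mt,\Et)$ is from solving \eqref{NS}, controllable as in Theorem \ref{mt}. Local well-posedness provides a classical solution; the task is to extend it globally by continuation based on uniform a priori estimates.

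The bootstrap assumption will be $\sup_{0\le t\le T}\|(V,W,Z)(t)\|_{H^3(\Omega_\e)}\le\chi$ for a suitably small $\chi=O(\varepsilon^{\beta})$. Two features require care: the vacuum and the $\theta^{\alpha}$-dependence of $\mu,\lambda,\kappa$. As in Theorem \ref{mt} I would truncate the rarefaction wave at height $\nu\sim\varepsilon^{a}$, paying for the cut-off error, which is $O(\nu^{1+\alpha})$ in natural norms, by the viscosity. Weighted $H^3$ energy estimates then produce coercive dissipation of the form $\varepsilon\int\theta^{\alpha}(|\nabla u|^2+|\nabla\theta|^2)$, while the cubic error terms are controlled by the two Gagliardo--Nirenberg inequalities promised in the introduction, which give $L^p$ bounds on $\Omega_\e=\R\times\Torus_\e^2$ with explicit $\varepsilon$-dependence. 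Combined with the initial smallness $\eta\le\eta_0=O(\varepsilon^{20})$, this closes the bootstrap; the regularity and the convergence \eqref{time0} then follow from the triangle inequality and the known $L^\infty$ convergence of the smooth cut-off wave to the centered rarefaction wave.

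For the exponential decay \eqref{nonzero1} of the non-zero mode, decompose every unknown $f$ into its $x'$-average $\bar f(t,x_1):=|\Torus_\e^2|^{-1}\int_{\Torus_\e^2}f\,dx'$ and its non-zero mode $\acute f:=f-\bar f$. Because the ansatz is $x'$-independent, projecting the perturbation equations onto the non-zero mode removes the rarefaction-wave forcing at leading order. For every $x_1$ the non-zero mode has zero mean on $\Torus_\e^2$, so the Poincar\'e inequality on a torus of side length $\varepsilon^b$ gives
\[
\|\acute f(t,x_1,\cdot)\|_{L^2(\Torus_\e^2)}\le C\varepsilon^{b}\|\nabla_{x'}\acute f(t,x_1,\cdot)\|_{L^2(\Torus_\e^2)}.
\]
Inserted into the dissipation $\varepsilon\int\theta^{\alpha}|\nabla\acute u|^2$, this yields a Gronwall-type inequality
\[
\frac{d}{dt}\N(t)+c\,\varepsilon^{n_2}\N(t)\le C\chi\,\N(t),\qquad \N(t)\sim\|(\acute V,\acute W,\acute Z)(t)\|_{H^3(\Omega_\e)}^2,
\]
with $n_2>1$ depending on $b$, $\alpha$ and the cut-off power $a$. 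Absorbing the right-hand side into the left using $\chi\ll\varepsilon^{n_2}$ gives $\N(t)\le \N(0)\,e^{-c\varepsilon^{n_2}t/2}$, and Sobolev embedding $H^3(\Omega_\e)\hookrightarrow L^\infty(\R^3)$ yields \eqref{nonzero1}. The main obstacle is bookkeeping the $\varepsilon$-exponents: the Poincar\'e constant scales like $\varepsilon^{b}$, the cut-off like $\varepsilon^{a}$, and the initial perturbation like $\varepsilon^{20}$, so $a,b,\alpha$ must be chosen simultaneously to close the global bootstrap while still producing the decay rate $n_2>1$ and the prefactor $O(\varepsilon^{1/3})$ in \eqref{nonzero1}.
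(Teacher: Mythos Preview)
Your overall strategy---zero/non-zero mode decomposition, Poincar\'e on $\Torus_\e^2$, Gronwall for exponential decay---matches the paper's, but there is a genuine gap in your understanding of the ansatz that would make the argument fail as written.

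You assert that the ansatz is $x'$-independent. It is not, and it cannot be. The initial perturbation $(V_0,W_0,Z_0)$ is periodic on the full 3D torus $\Torus_\e^3$, so it oscillates in $x'$ as well as in $x_1$. If the corrector you add to $(\rhob,\mb,\Eb)$ were $x'$-independent, the difference $(\rho_0-\rhot(0),m_0-\mt(0),\E_0-\Et(0))$ would still carry a nontrivial $x'$-periodic oscillation as $|x_1|\to\infty$, hence would not lie in $H^3(\Omega_\e)$, and your entire bootstrap on $\|(V,W,Z)\|_{H^3(\Omega_\e)}$ never gets off the ground. In the paper the ansatz \eqref{eq-ansatz} is built from the full 3D periodic solutions $(\rhob_\pm,\mb_\pm,\Eb_\pm)(t,x)$ of Lemma~\ref{lem-periodic-solution}, interpolated via the weights $\eta_i$; these solutions depend on all of $x$.

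Consequently your mechanism for the non-zero mode (``projecting removes the rarefaction-wave forcing'') is misidentified. The planar wave $(\rhob,\ub,\Tb)$ is indeed $x'$-independent and enters the non-zero mode equations only as variable coefficients; but the periodic correctors $\rhot_\pm,\mt_\pm,\Et_\pm$ do contribute non-zero mode forcing. What makes the forcing harmless is not that it vanishes, but that Lemma~\ref{lem-periodic-solution} gives $\|(\rhob_\pm-\rho_\pm,\mb_\pm-m_\pm,\Eb_\pm-\E_\pm)\|_{W^{k,\infty}}\lesssim\eta_0 e^{-\e^{n_k}t}$, so the non-zero mode source terms are already $O(\eta_0)e^{-\e^{n_2}\tau}$ (cf.\ Lemma~\ref{er} and the estimates \eqref{q0}--\eqref{eq0120}). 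The Gronwall step then reads $H'(\tau)+\nu^{\gamma+1}H(\tau)\le O(\eta_0)e^{-\e^{n_2}\tau}$ rather than your $\frac{d}{dt}\N+c\e^{n_2}\N\le C\chi\N$; the exponential on the right is inherited from the periodic solutions, not manufactured by absorbing $C\chi\N$. A minor point: the paper works in primitive variables $(\phi,\psi,\zeta)=(\rho-\rhot,u-\ut,\theta-\Tt)$ after the scaling $y=x/\e$, $\tau=t/\e$, not in the conservative variables $(V,W,Z)$ you propose.
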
  

\begin{Rem}\label{2}
 Compared to the convergence rate in the previous results  \cite{CCZ2022,LWW2020,LWW2022}, the ones in this paper are independent of the time so that the decay is feasible for infinite time. 
\end{Rem}

\begin{Rem}\label{1}
In this paper, although $(\rho,m,\E)$ oscillates in the transverse directions $x_2$ and $x_3$ when the initial perturbations carry periodic oscillations at the far field,  we still use the energy method with the aid of the G-N inequality on $\R\times\mathbb{T}_\e^2$ to achieve the vanishing viscosity limit of planar rarefaction waves in global space $\R^3$
. Moreover, we get an exponential decay rate of the non-zero mode of the solution. 
\end{Rem}


\begin{Rem}\label{4}
It is interesting to find that when $1\le b$ 
and set $a=
\frac{(\alpha+1)
}{9\gamma(\alpha+2)
-3}$, the exponential decay of the non-zero modes is independent of the values of $\gamma>1$ and $\alpha>0$. 
 The exponential decay of the non-zero modes can still be obtained when $1\ge b\ge b(\gamma,\alpha)$.
\end{Rem}

Since the well-structured and dissipative mechanism of the multi-dimensional compressible N-S equations \eqref{NS}, the purpose of this paper is to study the vanishing viscosity limit to planar rarefaction wave with vacuum by using an energy method. Let us outline the proof of main Theorems \ref{mt} and \ref{mt1}.
 Motivated by \cite{HLW,LWW}, the vacuum part of the rarefaction wave is cut off, and the cut-off parameter is very small and tends to zero as 
  the vanishing viscosity 
   converges to vacuum. Similarly to \cite{HuangXuYuan2022,LiuWangXu2023}, we construct a suitable ansatz $(\rhot,\ut,\Tt)$ to overcome the difficulties caused by the multi-dimensional 
   periodic perturbations. By the energy method with the aid of the G-N inequality, 
   we can construct a family of time-global solutions to the Cauchy problem for  
 3-D compressible N-S equations \eqref{NS}. When the viscosity tends to zero, the solutions of this family eventually converge to the corresponding planar rarefaction wave with vacuum in the sense of $L^\infty(\R^3)$. 
  In order to get the exponential decay of the non-zero mode, the solution of equations \eqref{NS} is decomposed into zero 
   and non-zero modes 
    in Fourier space for more accurate energy estimation. 

In this paper, we denote $\|\cdot\|=\|\cdot\|_{L^2
}$ and $\|\cdot\|_{\infty}=\|\cdot\|_{L^\infty
}$. For simplicity, we write $C$ as generic positive constants which are independent of time $t$ or $\tau$ and viscosity $\e$.

\section{ Preliminaries}




\subsection{Rarefaction Wave}
In order to overcome the degeneracies caused by the vacuum, we will cut off the 3-rarefaction wave $ (\rho^r,u_1^r,\theta^r)(\frac{x_1}{t}) $ with vacuum along the wave curve. For any constant $\nu>0$, there is a state $(\rho, u_1, \theta)=(\nu, u_{1\nu}, e^{\bar{S}}\nu^{\gamma-1})$ belonging to the 3-rarefaction wave curve, where $$u_{1\nu}=R_{3,1}\left(\rho_+, u_{1+}, S_+\right)+2\sqrt{\frac{\gamma}{\gamma-1}\nu^{\gamma-1}e^{S_+}},\ \ \ \bar{S}=S_+.$$ Now, the new cut-off 3-rarefaction wave $(\rho^r_{\nu}, u^r_{1\nu}, \theta^r_{\nu})(\xi)$, $(\xi=\frac{x_1}{t})$ connecting the state $(\rho_-, u_{1-}, \theta_-):=(\nu, u_{1\nu}, e^{\bar{S}}\nu^{\gamma-1})$ to the state $(\rho_+, u_{1+}, \theta_+)$ can be expressed explicitly by 
\begin{equation}\label{nrw4}
\lambda_{3}\left(\rho_{\nu}^{r}, u_{1\nu}^{r}, \theta_{\nu}^{r}\right)\left(\xi\right)=
\begin{cases}
	\lambda_3(\nu, u_{1\nu}, e^{\bar{S}}\nu^{\gamma-1}),  &\xi<\lambda_{3}(\nu, u_{1\nu}, e^{\bar{S}}\nu^{\gamma-1}),\\  
	\xi, &\lambda_3(\nu, u_{1\nu}, e^{\bar{S}}\nu^{\gamma-1})\leq \xi \leq \lambda_3(\nu, u_{1\nu}, e^{\bar{S}}\nu^{\gamma-1}), \\
	\lambda_{3}(\rho_+, u_{1+}, \theta_+), &\frac{x_1}{t}>\lambda_{3}(\rho_+, u_{1+}, \theta_+),
\end{cases}
\end{equation}
and 
\begin{equation}\label{nrw5}
R_{3,1}\left(\rho_{\nu}^{r}, u_{1\nu}^{r}, \theta_{\nu}^{r}\right)=R_{3,1}\left(\nu, u_{1\nu}, e^{\bar{S}}\nu^{\gamma-1}\right)=R_{3,1}\left(\rho_+, u_{1 +}, \theta_+\right).
\end{equation}
Then we have the following Lemma {to show that the cut-off 3-rarefaction wave $(\rho_\nu^r,u_{1\nu}^r, n^r_{\nu})$ converges to the original 3-rarefaction wave $(\rho^r, u_1^r, n^r)$ in sup-norm with the convergence rate $\nu$ as $\nu\rightarrow0$.}
\begin{Lem}\label{sr1}
There exists a constant $\nu_0\in(0, 1)$, for $\forall\nu\in(0, \nu_0]$ such that
\begin{equation}\label{nrw6}
	||(\rho_{\nu}^r, m_{1\nu}^r, n_{\nu}^r)(\frac{\cdot}{t})-(\rho^r, m_1^r,n^r)(\frac{\cdot}{t})||_{{L^\infty}}\leq C\nu, \ \ \  \ t>0,
\end{equation}
where the positive constant $C$ is independent of $\nu$, $m^r_{1\nu}=\rho^r_\nu  u^r_{1\nu}, m_1^r=\rho^ru_1^r$ and $n^r_\nu=\rho^r_\nu \theta^r_\nu, n^r=\rho^r\theta^r$ represent the momentum function and the total internal energy respectively.
\end{Lem}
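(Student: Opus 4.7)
\textbf{Proof plan for Lemma~\ref{sr1}.} The key observation is that both rarefaction waves $(\rho^r,u_1^r,\theta^r)$ and $(\rho_\nu^r,u_{1\nu}^r,\theta_\nu^r)$ are parametrized by the \emph{same} curve in state space, differing only in where the parametrization is truncated near the vacuum. Indeed, both are required to share the Riemann invariants $R_{3,1}=R_{3,1}(\rho_+,u_{1+},S_+)=u_{1-}$ and $R_{3,2}=S_+$, and the paper's definition $u_{1\nu}=R_{3,1}(\rho_+,u_{1+},S_+)+2\sqrt{\gamma\nu^{\gamma-1}e^{S_+}/(\gamma-1)}$ is precisely what places $(\nu,u_{1\nu},e^{S_+}\nu^{\gamma-1})$ on the original 3-rarefaction curve.

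Using $p=(\gamma-1)\rho^\gamma e^{S_+}$, I would integrate the Riemann invariant explicitly to obtain, on the fan,
\[
u_1(\rho)=u_{1-}+\tfrac{2\sqrt{\gamma e^{S_+}}}{\sqrt{\gamma-1}}\,\rho^{(\gamma-1)/2},\qquad \theta(\rho)=e^{S_+}\rho^{\gamma-1},
\]
so that $\lambda_3(\rho,u_1(\rho),\theta(\rho))=u_{1-}+\kappa_0\,\rho^{(\gamma-1)/2}$ with $\kappa_0:=(\gamma+1)\sqrt{\gamma e^{S_+}}/\sqrt{\gamma-1}$, and the inverse $\rho(\xi)=((\xi-u_{1-})/\kappa_0)^{2/(\gamma-1)}$ gives a single explicit formula that governs \emph{both} the original and the cut-off wave on whatever subinterval of the fan each occupies. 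In particular the original fan is $[u_{1-},\lambda_3(\rho_+,u_{1+},\theta_+)]$ while the cut-off fan is $[u_{1-}+\kappa_0\nu^{(\gamma-1)/2},\lambda_3(\rho_+,u_{1+},\theta_+)]$.

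Next I would partition $\mathbb{R}$ by $\xi=x_1/t$ into three regions and estimate pointwise. (i) For $\xi\ge\lambda_3(\rho_+,u_{1+},\theta_+)$, both waves equal the constant right state, giving zero difference. (ii) For $\lambda_3(\nu,u_{1\nu},e^{S_+}\nu^{\gamma-1})\le \xi<\lambda_3(\rho_+,u_{1+},\theta_+)$, both waves sit on the fan and are given by exactly the same explicit $\rho(\xi)$, $u_1(\rho(\xi))$, $\theta(\rho(\xi))$, so $(\rho^r,m_1^r,n^r)\equiv(\rho_\nu^r,m_{1\nu}^r,n_\nu^r)$. (iii) For $\xi<u_{1-}+\kappa_0\nu^{(\gamma-1)/2}$, there are two sub-cases: if $\xi<u_{1-}$, then $(\rho^r,m_1^r,n^r)=(0,0,0)$ while $(\rho_\nu^r,m_{1\nu}^r,n_\nu^r)=(\nu,\nu u_{1\nu},e^{S_+}\nu^\gamma)$, each of which is $O(\nu)$ since $u_{1\nu}-u_{1-}=O(\nu^{(\gamma-1)/2})$ and $\gamma>1$; if $u_{1-}\le\xi<u_{1-}+\kappa_0\nu^{(\gamma-1)/2}$, then on the original side one has $0\le\rho^r(\xi)\le\nu$, $u_{1-}\le u_1^r(\xi)\le u_{1\nu}$, $0\le\theta^r(\xi)\le e^{S_+}\nu^{\gamma-1}$, while the cut-off is pinned at its left state, so again each component of the difference is bounded by a constant multiple of $\nu$.

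Combining the three regions yields $\|(\rho^r-\rho_\nu^r,\,m_1^r-m_{1\nu}^r,\,n^r-n_\nu^r)\|_{L^\infty}\le C\nu$ with $C$ depending only on $\gamma,S_+,u_{1-},\rho_+$ and, importantly, independent of $t$ because both waves are self-similar in $\xi=x_1/t$. The argument is essentially algebraic book-keeping once the key observation about the shared rarefaction curve is in place; the only thing one must watch is the sub-case (iii) where the original wave passes through the fan while the cut-off sits at the constant left state, but even there $\rho^r(\xi)\le\nu$ forces everything to stay $O(\nu)$. I do not foresee a substantive obstacle.
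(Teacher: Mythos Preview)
Your argument is correct. The paper does not supply its own proof of this lemma; it is stated as a preparatory fact (the same cut-off estimate appears in the cited one-dimensional works \cite{HLW,LWW}) and the exposition moves directly on to the smooth approximate rarefaction wave. Your plan supplies exactly the computation one would expect: the crucial point is that $(\nu,u_{1\nu},e^{S_+}\nu^{\gamma-1})$ lies on the \emph{same} 3-rarefaction curve through $(\rho_+,u_{1+},\theta_+)$, so the two self-similar profiles coincide identically on the common fan $[\lambda_3(\nu,u_{1\nu},\theta_\nu),\lambda_3(\rho_+,u_{1+},\theta_+)]$ and to the right of it, and differ only on $\xi<\lambda_3(\nu,u_{1\nu},\theta_\nu)$, where $\rho^r(\xi)\le\nu$ forces each of $\rho^r$, $m_1^r=\rho^r u_1^r$, $n^r=\rho^r\theta^r=e^{S_+}(\rho^r)^\gamma$ and the corresponding cut-off quantities to be $O(\nu)$. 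The self-similarity in $\xi=x_1/t$ makes the bound uniform in $t>0$, as you note. There is nothing to add.
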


Similar to \cref{nrw2}, the approximate 3-rarefaction wave $(\rho_{\nu}^{\bar{r}}, u_{1\nu}^{\bar{r}}, \theta_{\nu}^{\bar{r}})\left(t, {x_1}\right)$ of the cut-off 3-rarefaction wave $(\rho_{\nu}^r, u_{1\nu}^r, \theta_{\nu}^r)(\frac{x_1}{t})$ to the Euler system \cref{EE} can be obtained 
 by solving
\begin{align}\label{nrws}
\left\{\begin{aligned}
	&w_+=\lambda_{3}(\rho_+,u_{1+},\theta_+), \ \  w_-=\lambda_{3}(\nu, u_{1\nu}, e^{\bar{S}}\nu^{\gamma-1}),\\
	&w^{\bar{r}}\left(1+t, {x_1}\right)=\lambda_{3}\left(\rho_{\nu}^{\bar{r}}, u^{\bar{r}}_{1\nu}, \theta_{\nu}^{\bar{r}}\right)\left(t, {x_1}\right), \\
	&R_{3,1}\left(\rho_{\nu}^{\bar{r}}, u^{\bar{r}}_{1\nu}, \theta_{\nu}^{\bar{r}}\right)\left(t, {x_1}\right)=R_{3,1}\left(\rho_{+}, u_{1 +}, \theta_{+}\right)=R_{3,1}(\nu, u_{1\nu}, e^{\bar{S}}\nu^{\gamma-1}),  \\
	&R_{3,2}\left(\rho_{\nu}^{\bar{r}}, u^{\bar{r}}_{1\nu}, \theta_{\nu}^{\bar{r}}\right)\left(t, {x_1}\right)=S_{\nu}^{\bar{r}}=S_+.
\end{aligned}\right.
\end{align}
Direct calculations give that the smooth planar rarefaction wave 
\begin{equation}\label{prw}
(\rho_{\nu}^{\bar{r}},  u_{\nu}^{\bar{r}}, \theta_{\nu}^{\bar{r}})\left(t, {x_1} \right) = (\rho_{\nu}^{\bar{r}},u_{1\nu}^{\bar{r}},0,0,\theta_{\nu}^{\bar{r}})(t,{x_1})
\end{equation}
{is the solution of} 
Euler system \cref{EE}. By the remarkable result of \cite{MatsumuraNishihara1986}, we verify that the profile $(\rho_{\nu}^{\bar{r}},  u_{\nu}^{\bar{r}}, \theta_{\nu}^{\bar{r}})\left(t, x_1 \right)$ has following properties. For simplicity, we write $(\rho_{\nu}^{\bar{r}}, u_{\nu}^{\bar{r}},\theta_{\nu}^{\bar{r}})$ for short as $(\bar{\rho}, \bar{ u}, \bar{\theta})$ that satisfies 
\begin{align}\label{eq7}
\left\{\begin{aligned}
	&\pt\bar{\rho}+\px(\bar\rho \bar u_1)=0, \\
	&\bar\rho\pt \bar u+\bar\rho \bar u_1 \px\bar  u+\px\bar p\mathbb{I}_1=0, \\
	&\bar\rho\pt{\bar \theta}+\bar\rho\bar u_1\px\bar \theta+\bar p\px\bar u_{1}=0, \\
\end{aligned}\right.
\end{align}
where $\bar p=R\bar\rho\bar\theta=A\bar\rho^\gamma\exp(\frac{\gamma-1}{R}\bar S)$.

\begin{Lem}\cite{MatsumuraNishihara1986} 
\label{sr}
The approximate smooth 3-rarefaction wave $(\bar{\rho}, \bar{u}_1, \bar{\theta})$ constructed in \cref{nrw2} such that

(i) $\px\bar{u}_{1}>0, \px\bar{\rho}=\frac{1}{\sqrt{R \gamma \rho_{+}^{1-\gamma} \theta_{+}}}   \bar{\rho}^{\frac{3-\gamma}{2}} \px\bar{u}_{1}>0$, $\px\bar{\theta}=\frac{\gamma-1}{\sqrt{R \gamma}} \sqrt{\bar\theta} \px\bar{u}_{1}>0,$ and
\begin{align}\label{eq66}
	\begin{aligned}
		\px^2\bar{\rho}=\frac{1}{\sqrt{R \gamma \rho_{+}^{1-\gamma} \theta_{+}}}   \bar{\rho}^{\frac{3-\gamma}{2}}  \px^2\bar{u}_{1 }+\frac{3-\gamma}{2R \gamma \rho_{+}^{1-\gamma} \theta_{+}}   \bar{\rho}^{2-\gamma} (\px\bar u)^2,\ \ \px^2\bar{\theta}=\frac{\gamma-1}{\sqrt{R \gamma}} \sqrt{\bar\theta} \px^2\bar{u}_{1 }+\frac{\gamma-1}{2R\gamma}(\px\bar u)^2,
	\end{aligned}
\end{align}
for all
$
x_1 \in \mathbb{R} \text { and } t \geq 0.
$

(ii) For all $t \geq 0, \delta>0$ and $p \in[1,+\infty]$, the following estimate holds 
\begin{align}
\left\|\px \bar {u}_{1}\right\|_{L^{p}(\mathbb{R})} \leq C(\delta+t)^{-1+1/p}, \quad %
\left\|\px^2\bar {u}_{1}\right\|_{L^{p}(\mathbb{R})} \leq C(\delta+t)^{-1}\delta^{-1+1/p}.
\end{align}

(iii) There exists a constant $\delta_{0} \in(0,1)$ and a uniform constant $C$ such that for all $\delta \in \left(0, \delta_{0}\right]$ and $t>0$,
\begin{align}\label{eqsr}
\left\|\left(\bar\rho, \bar{u}_{1}, \bar\theta\right)(t, \cdot)-\left(\rho_{\nu}^{r}, u_{1\nu}^{r}, \theta_{\nu}^{r}\right)(t, \cdot)\right\|_{L^{\infty}(\mathbb{R})} \leq C \delta t^{-1}{[\log (1+t)+|\log \delta|]} .
\end{align}
\end{Lem}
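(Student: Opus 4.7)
The plan is to reduce all three statements to properties of the Burgers solution $w^{\bar r}(t,x_1)$ from \eqref{bes}, and then transfer them to $(\bar\rho,\bar u_1,\bar\theta)$ through the algebraic relations coming from the Riemann invariants in \eqref{nrws}. Since the second and third Riemann invariants $R_{3,1}$ and $R_{3,2}=S_+$ are constants along the 3-rarefaction curve, and since for a polytropic gas with constant entropy one has $\sqrt{p_\rho(\rho,S_+)}=\sqrt{R\gamma\,e^{(\gamma-1)S_+/R}}\,\rho^{(\gamma-1)/2}$, the two relations
\[
\bar u_1+\tfrac{2}{\gamma-1}\sqrt{p_\rho(\bar\rho,S_+)}=R_{3,1}(\rho_+,u_{1+},S_+),\qquad
\bar u_1+\sqrt{p_\rho(\bar\rho,S_+)}=w^{\bar r},
\]
can be inverted to express $\bar\rho$, $\bar u_1$, $\bar\theta=e^{S_+}\bar\rho^{\gamma-1}$ as smooth, strictly monotone functions of $w^{\bar r}$ alone on the physically relevant range. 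Differentiating these identities gives $\partial_{x_1}\bar u_1=\frac{2}{3-\gamma}\partial_{x_1} w^{\bar r}$ (and similar explicit constants) together with the stated formulas relating $\partial_{x_1}\bar\rho,\partial_{x_1}\bar\theta$ to $\partial_{x_1}\bar u_1$ in (i); the second-order formulas in \eqref{eq66} come from differentiating once more and collecting quadratic terms in $\partial_{x_1}w^{\bar r}$. Positivity in (i) follows immediately from $\partial_{x_1}w^{\bar r}>0$, which itself follows from the method-of-characteristics formula $w^{\bar r}(t,x_1)=w^{\bar r}_\delta(x_0)$, $x_1=x_0+w^{\bar r}_\delta(x_0)t$, since $(w^{\bar r}_\delta)'>0$ for $\tanh$-data.

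For part (ii), I would start from the characteristic identity
\[
\partial_{x_1}w^{\bar r}(t,x_1)=\frac{(w^{\bar r}_\delta)'(x_0)}{1+(w^{\bar r}_\delta)'(x_0)\,t},
\]
together with $(w^{\bar r}_\delta)'(x_0)=\frac{w_+-w_-}{2\delta}\mathrm{sech}^2(x_0/\delta)\le C/\delta$. The two elementary bounds $\partial_{x_1}w^{\bar r}\le(w^{\bar r}_\delta)'(x_0)\le C/\delta$ and $\partial_{x_1}w^{\bar r}\le 1/t$ immediately give $\|\partial_{x_1}w^{\bar r}\|_{L^\infty}\le C/(\delta+t)$. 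The $L^1$ bound is $\int\partial_{x_1}w^{\bar r}\,dx_1=w_+-w_-$. Interpolating between these endpoints, or rescaling $y=x_0/\delta$ in the change-of-variables integral $\int\frac{((w^{\bar r}_\delta)')^p}{(1+(w^{\bar r}_\delta)' t)^{p-1}}dx_0$, yields $\|\partial_{x_1}\bar u_1\|_{L^p}\le C(\delta+t)^{-1+1/p}$. The second-derivative estimate is obtained by differentiating the characteristic formula once more: the extra factor of $(w^{\bar r}_\delta)'(x_0)(1+(w^{\bar r}_\delta)'t)^{-2}$ combined with the upper bound $(w^{\bar r}_\delta)''=O(\delta^{-2})$ produces the additional factor $\delta^{-1+1/p}$ appearing in the $L^p$ bound for $\partial_{x_1}^2\bar u_1$.

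For the comparison estimate (iii), I would partition $\mathbb R$ into three regions determined by the self-similar fan $u_-t\le x_1\le \lambda_3(\rho_+,u_{1+},\theta_+)t$. Outside the fan the exact $(\rho^r_\nu,u^r_{1\nu},\theta^r_\nu)$ is constant, while $(\bar\rho,\bar u_1,\bar\theta)$ converges to these constants exponentially fast in $|x_1-w_\pm t|/\delta$, so those contributions are of size $O(\delta/t)$. Inside the fan the exact wave satisfies $u^r_{1\nu}=u^r_{1\nu}(x_1/t)$ and $w^{\bar r}(t,x_1)=w^{\bar r}_\delta(x_0)$ with $x_0=x_1-w^{\bar r}_\delta(x_0)t$; writing $x_0=x_1-(x_1/t)\,t+\bigl((x_1/t)-w^{\bar r}_\delta(x_0)\bigr)t$ gives $|x_0|\lesssim \delta\,|\log\delta|$ in the bulk of the fan, and a direct estimate using $|w^{\bar r}_\delta(x_0)-x_0/t|\lesssim \delta$ on the transition layers gives $\|w^{\bar r}-w^r_\nu\|_{L^\infty}\le C\delta\,t^{-1}[\log(1+t)+|\log\delta|]$. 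The monotone smooth dependence of $(\bar\rho,\bar u_1,\bar\theta)$ on $w^{\bar r}$ established in (i) then transfers this bound to the full vector $(\bar\rho,\bar u_1,\bar\theta)$, proving \eqref{eqsr}.

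The main obstacle is getting the sharp factor $\log(1+t)+|\log\delta|$ in (iii): one must track carefully the logarithmic size of the transition layers between the two constant states and the linear profile of the fan, using the sharp decay of $\tanh$ and of the implicit map $x_0\mapsto x_1$, rather than absorbing them into a crude $O(1)$ constant. The remaining ingredients — characteristic estimates for Burgers and algebraic inversion via the Riemann invariants — are routine, and the final convergence rate will govern the choice of $\delta=\delta(\varepsilon)$ in the vanishing-viscosity argument.
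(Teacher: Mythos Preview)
The paper does not give its own proof of this lemma; it simply cites \cite{MatsumuraNishihara1986} and moves on. Your outline is precisely the standard Matsumura--Nishihara argument from that reference: invert the Riemann-invariant relations to write $(\bar\rho,\bar u_1,\bar\theta)$ as smooth monotone functions of the Burgers solution $w^{\bar r}$, read off (i) by differentiation, obtain (ii) from the method-of-characteristics formula $\partial_{x_1}w^{\bar r}=(w^{\bar r}_\delta)'/(1+(w^{\bar r}_\delta)'t)$ together with $L^1$--$L^\infty$ interpolation, and prove (iii) by the three-region comparison with the self-similar fan. One small slip: from $R_{3,1}=\bar u_1-\tfrac{2}{\gamma-1}\sqrt{p_\rho}$ and $w^{\bar r}=\bar u_1+\sqrt{p_\rho}$ one gets $\partial_{x_1}\bar u_1=\tfrac{2}{\gamma+1}\,\partial_{x_1}w^{\bar r}$, not $\tfrac{2}{3-\gamma}$; this does not affect the argument since only positivity of the constant matters.
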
 

\subsection{Ansatz}
 Although the initial perturbations \eqref{inih} are periodic, we cannot expect that the solutions are also periodic because of 
  the non-trivial background solution. Thus, the construction of ansatz to make the perturbations in the right Sobolev space is crucial for our proof. The key of construction is to eliminate the oscillations at infinity. We use the periodic solutions to achieve this aim. Set $(\rhob_\pm,\mb_\pm,\Eb_\pm){(t,x)}$ as the solutions of N-S equations with the following periodic initial data
\begin{align}\label{initial}
(\rhob_\pm,\mb_\pm,\Eb_\pm)(0,x)=(\rho_\pm,m_\pm,\E_\pm)+(V_0,W_0,Z_0){(x)},
\end{align}
{where $\rho_\pm, {\E}_\pm>0, {m}_\pm \in \mathbb{R}^{3}$ are given constants.} 
We can deduce following Lemma.  
\begin{Lem}\label{lem-periodic-solution}
For the N-S equations \eqref{NS} with the 
periodic initial data
 {\eqref{initial}},
and $(V_0,W_0,Z_0){(x)} \in H^{k+2}\left(\mathbb{T}_\e^{3}\right)(k \geqslant 1)$ in \eqref{cd-zeroav} are periodic functions defining on $\mathbb{T}_\e^{3}=[0,\e^b]^{3}$ 
for $1\le b
$. 
Then there exists a constant $\eta_0:=O(\e^{20})>0$ such that for
\begin{align}\label{eq095} 
\eta:=\left\|(V_{0}, {W}_{0},Z_0)(x)\right\|_{H^{k+2}\left(\mathbb{T}_{\e}^{3}\right)} \leqslant \eta_{0},
\end{align}  
 the Cauchy problem \cref{NS,initial} admits a pair of unique global periodic solutions $(\rhob_\pm, \mb_\pm,\Eb_\pm) \in C\left((0,+\infty) ; H^{k+2}\left(\mathbb{T}_\e^{3}\right)\right)$, satisfying
$$
\int_{\mathbb{T}_\e^{3}}(\rhob_\pm-{\rho}_\pm, \mb_\pm-m_\pm,\Eb_\pm-{\E_\pm}
)(t,x) d x=0, \quad \forall t \geqslant 0,
$$
and
\begin{align}\label{eq096}
\|(\rhob_\pm-{\rho}_\pm, \mb_\pm-m_\pm,\Eb_\pm-\E_\pm)\|_{W^{k, \infty}\left(\mathbb{R}^{3}\right)} \lesssim \eta_0 e^{-\e^{n_k} t}, \quad \forall t>0,
\end{align}
where $n_k>0$ is a constant that depends on $k$.
\end{Lem}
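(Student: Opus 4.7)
The plan is to view $(V_\pm,W_\pm,Z_\pm):=(\rhob_\pm-\rho_\pm,\mb_\pm-m_\pm,\Eb_\pm-\E_\pm)$ as a small mean-zero periodic perturbation of the stable constant state $(\rho_\pm,m_\pm,\E_\pm)$, and to combine a classical local existence result with a symmetrised $H^{k+2}$ energy estimate. The exponential decay $e^{-\e^{n_k}t}$ will emerge from the interplay between the viscosity of size $\e$ and the Poincar\'e constant on the small torus $\Torus_\e^3=[0,\e^b]^3$, which is of order $\e^b$.

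\textbf{Local existence and mean conservation.} Since $\rho_\pm>0$ and $\eta_0$ is small, the initial density is bounded away from $0$, so local well-posedness of \eqref{NS} in $H^{k+2}(\Torus_\e^3)$ follows from standard theory for the full compressible Navier--Stokes system with smooth strictly positive coefficients; the dependence $\mu(\theta)=\mu_1\theta^\alpha$ causes no trouble since $\theta$ remains in a compact set on which $\theta^\alpha$ is smooth. The conservation form of \eqref{NS} coupled with periodicity gives
\begin{equation*}
\frac{d}{dt}\int_{\Torus_\e^3}(V_\pm,W_\pm,Z_\pm)(t,x)\,dx=0,
\end{equation*}
so the zero-mean property \eqref{cd-zeroav} propagates in time, which is what allows Poincar\'e below.

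\textbf{Energy estimate and decay.} Next I run the standard $H^{k+2}$ energy estimate for the perturbation system. Viscosity and heat conduction directly supply dissipation for $\nabla W_\pm$ and $\nabla Z_\pm$ of size $\e$; dissipation for $\nabla V_\pm$ is recovered by the Matsumura--Nishida trick of testing the momentum equation against $\nabla V_\pm$ and exploiting $p_\rho(\rho_\pm,\theta_\pm)>0$. Grouping these gives an inequality of the form
\begin{equation*}
\frac{d}{dt}\mathcal{E}_k(t)+c\e\,\|\nabla(V_\pm,W_\pm,Z_\pm)\|_{H^{k+1}}^2\le C\sqrt{\mathcal{E}_k(t)}\,\|\nabla(V_\pm,W_\pm,Z_\pm)\|_{H^{k+1}}^2,
\end{equation*}
with $\mathcal{E}_k\sim\|(V_\pm,W_\pm,Z_\pm)\|_{H^{k+2}}^2$; the smallness $\sqrt{\mathcal{E}_k(0)}\le\eta_0\ll\e$ absorbs the cubic right-hand side into the dissipation. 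Because $(V_\pm,W_\pm,Z_\pm)$ is mean-zero on $\Torus_\e^3$, Poincar\'e yields $\|\cdot\|_{L^2}\le C\e^{b}\|\nabla\cdot\|_{L^2}$, so the dissipation controls $c\,\e^{\,1-2b}\mathcal{E}_k$ from below and Gr\"onwall produces $\mathcal{E}_k(t)\le\mathcal{E}_k(0)\exp(-c\,\e^{\,1-2b}t)$. Choosing $n_k:=2b-1\ge 1$ and applying the Sobolev embedding $H^{k+2}(\Torus_\e^3)\hookrightarrow W^{k,\infty}(\Torus_\e^3)$ together with periodicity (so that $\|\cdot\|_{W^{k,\infty}(\R^3)}=\|\cdot\|_{W^{k,\infty}(\Torus_\e^3)}$) produces \eqref{eq096}. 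The uniform smallness of $\mathcal{E}_k(t)$ then continues the local solution to all of $[0,\infty)$ by a standard bootstrap.

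\textbf{Main obstacle.} The only delicate point is establishing the density dissipation for $\nabla V_\pm$ with constants uniform in $\e$ on the small torus, since the continuity equation is purely hyperbolic and does not dissipate directly; the Matsumura--Nishida compensating functional has to be quantified so that the resulting lower bound has the correct $\e$-scaling to survive through Poincar\'e. Once this is in hand, the polynomial losses in $\e$ coming from Sobolev embedding constants on $\Torus_\e^3$ and from nonlinear interaction terms are all absorbed by the generous smallness $\eta_0=O(\e^{20})$, and the rate $\e^{n_k}$ in \eqref{eq096} is exactly what dimensional counting of viscosity against torus size predicts.
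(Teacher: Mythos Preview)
Your overall skeleton (local existence, conservation of the mean, symmetrised energy, Matsumura--Nishida functional for $\nabla V_\pm$, Poincar\'e to convert dissipation into exponential decay, Sobolev embedding) is exactly the scheme the paper follows in its appendix. The gap is not in the architecture but in what you take as ``constants''.

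In the paper the lemma is applied with the \emph{left} state equal to the cut-off value $(\rho_-,u_-,\theta_-)=(\nu,u_{1\nu},e^{\bar S}\nu^{\gamma-1})$, where $\nu=\e^{Za}|\log\e|$; see \eqref{nrws}, \eqref{eq091} and the first lines of the appendix. Thus $\rho_-,\theta_-\to 0$ with $\e$, and the transport coefficients $\mu(\theta_-)=\mu_1\theta_-^\alpha$, $\kappa(\theta_-)$ as well as $p_\rho(\rho_-,\theta_-)=R\theta_-$ are all degenerate. Your sentence ``$\theta$ remains in a compact set on which $\theta^\alpha$ is smooth'' hides the issue: the compact set shrinks to $\{0\}$. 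Consequently the constants $c$ and $C$ in your displayed energy inequality are not $O(1)$ but carry negative powers of $\nu$, and the equivalence $\mathcal E_k\sim\|(V_\pm,W_\pm,Z_\pm)\|_{H^{k+2}}^2$ also degenerates. This is precisely why the paper works with the weighted quantities $\rho_-^{\gamma-2}\phi^2$, $\rho_-\psi^2$, $\rho_-^{2-\gamma}\zeta^2$, $\theta_-^\alpha|\nabla\psi|^2$, $\theta_-^{2\alpha}\rho_-^{-3}|\nabla\phi|^2$, etc., and tracks every power of $\nu$ through Steps~1--3 of the appendix before applying Poincar\'e. It is also why $\eta_0$ must be as small as $O(\e^{20})$ and why the final rate is the \emph{slow} one $e^{-\e^{n_k}t}$ rather than anything better: the degeneracy of the background eats the gain from the small torus.

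A related symptom is your rate bookkeeping. From $\frac{d}{dt}\mathcal E_k+c\e^{1-2b}\mathcal E_k\le 0$ with $b\ge 1$ you obtain decay at rate $\e^{1-2b}=\e^{-(2b-1)}$, which is \emph{large}; writing this as $\e^{n_k}$ with $n_k=2b-1>0$ is a sign error (you would be claiming $\e^{-(2b-1)}=\e^{2b-1}$). For the non-degenerate state $(\rho_+,\theta_+)$ your argument indeed gives a rate much faster than the stated $\e^{n_k}$, which is fine; but for the degenerate $(\rho_-,\theta_-)=(\nu,e^{\bar S}\nu^{\gamma-1})$ the honest rate picks up additional positive powers of $\e$ from $\mu(\theta_-)$ and $p_\rho$, and it is this accounting---not the Poincar\'e/viscosity balance alone---that produces the $\e^{n_k}$ in \eqref{eq096}.
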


\begin{proof}
The proof of Lemma \ref{lem-periodic-solution} is stated in the appendix, which is based on the energy method with the aid of the G-N inequality and Poincar
{\'e}'s inequality on the bounded domain.
\end{proof}

Next, we define some weight functions 
\begin{align}
\eta_1:=\frac{\rhob-\rho_-}{\rho_+-\rho_-},\quad\eta_2:=\frac{\mb-m_-}{m_+-m_-},\quad\eta_3:=\frac{\Eb-\E_-}{\E_+-\E_-},
\end{align}then the proper ansatz is constructed as
\begin{align}\label{eq-ansatz}
	\begin{aligned}
	\rhot&=\bar\rho+(1-\eta_{1}){\rhot}_-+\eta_{1}{\rhot}_+,\quad
	\tilde{m}=\mb+(1-\eta_{2}){\mt}_-+\eta_{2}{\mt}_+,\quad
		\Et=\bar\E+(1-\eta_3){\Et}_-+\eta_3{\Et}_+,
	\end{aligned}
\end{align}
where $\rhot_{\pm}=\bar\rho_{\pm}-\rho_{\pm}, \mt_{\pm}=\mb_{\pm}-m_{\pm}, \Et_{\pm}=\bar\E_{\pm}-\E_{\pm}.$
And correspondingly,
\begin{align}\label{eq-ansatzs1}
	\ut:=\frac{\mt}{\rhot},\qquad\thetat=\frac{\gamma-1}{R}\left(\frac{\Et}{\rhot}-\frac{|\ut|^2}{2}\right).
\end{align} 
Moreover, the ansatz $(\rhot,\ut,\Tt)(t,x)$ satisfies the following asymptotic N-S equations,
\begin{align}\label{ae}
	\begin{cases}
		\rhot_{t}+\dv (\rhot \ut):={e_0}, \\
		\rhot \ut_{t}+\rhot \ut \cdot \nabla\ut+R \nabla(\rhot \thetat)-\varepsilon\dv(2\mu(\thetat
		) \mathbb{D}( \ut)+\lambda(\thetat
		)\dv\tilde{u}\mathbb{I}):=\ev{-e_0\tilde{u}}, \\
	\rhot \thetat_{t}+\rhot\ut\cdot \nabla\thetat+\tilde{p} \dv  \ut
		-\varepsilon\dv(\kappa(\thetat
		) \nabla \thetat)-\frac{\mu( \thetat)\varepsilon}{2}\left|\nabla \ut+(\nabla \ut)^{t}\right|^{2}-\lambda(\thetat)\varepsilon(\dv  \ut)^{2}\\
		\qquad :=e_4-\tilde{u}\cdot\ev{+{e_0}\left(\frac{|\tilde{u}|^2}{2}-\tilde\theta\right)},
	\end{cases}
\end{align}
where $\mu(\thetat)=\mu_1\thetat^\alpha, \lambda(\thetat)=\lambda_1\thetat^{\alpha}, \kappa(\thetat)=\kappa_1\thetat^{\alpha}, e_0=e_0(t,x),  \ev=(e_1,e_2,e_3)^t(t,x)$
. {{In the light of the Lemma \ref{lem-periodic-solution}, it is easy to check that }}
\begin{align}\label{eq094}
\|(\rhot-\bar\rho, \mt-\bar m, \Et-\bar\E)(t, x)\|_{W^{k,\infty}(\R^3)}\le C\eta_0
e^{-\varepsilon^{n_k} t},\ \ \ \forall t\ge0.
\end{align}
The errors satisfy the following estimates. 
 \begin{Lem}\label{er} 
	Under the same assumptions of \cref{mt1}, the error terms defined in \cref{ae} satisfy that
	\begin{equation}\label{est-h}
		\begin{aligned}
			&\norm{e_0}_{W^{2,p}(\Omega_\e)} +\norm{e_4+\left[(2\mu_1+\lambda_1)\e\Tb^\alpha|\px\bar{u}|^2+\px(\e\kappa_1\Tb^{\alpha}\px\Tb)\right]}_{W^{1,p}(\Omega_\e)}  \leq C\eta_{0}
			 e^{-\e^{n_2} t},\ \forall t>0,\\
			& \norm{e_1+\px\left[(2\mu_1+\lambda_1)\Tb^{\alpha}\e\px\bar{u}_{1}\right]}_{W^{1,p}(\Omega_\e)}+\norm{(e_2,e_3)}_{W^{2,p}(\Omega_\e)} \leq C\eta_{0} 
			e^{-\e^{n_2} t}, \ \ \forall t>0,
		\end{aligned}
	\end{equation}
	where 
	 ${p\in[1,+\infty]}, n_2>1
	$ and $\Omega_\e:=\R\times\Torus^2_\e.$
\end{Lem}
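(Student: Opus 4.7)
The plan is to substitute the ansatz $(\rhot,\ut,\Tt)$ defined in \eqref{eq-ansatz}--\eqref{eq-ansatzs1} directly into the non-conservative form of \cref{NS} and read off $e_0, \ev, e_4$ as explicit residuals. Because the ansatz is a sum of the smooth cut-off rarefaction wave $(\rhob,\ub,\Tb)$, which satisfies the \emph{inviscid} Euler system \cref{eq7}, and weighted periodic corrections $\rhot_\pm,\mt_\pm,\Et_\pm$, which evolve under the full N-S system by Lemma~\ref{lem-periodic-solution}, the residuals split into three disjoint pieces: (i) the purely viscous residual obtained by inserting $(\rhob,\ub,\Tb)$ into the dissipative part of \cref{NS}; (ii) residuals proportional to $\rhot_\pm,\mt_\pm,\Et_\pm$ and their derivatives, which inherit the exponential decay of Lemma~\ref{lem-periodic-solution}; (iii) cross terms that arise because the three weights $\eta_1,\eta_2,\eta_3$ are distinct and because the nonlinear combinations $\rhot\ut$, $\tilde p$, $\mu(\Tt)$, $\kappa(\Tt)$ do not commute with the convex combination defining the ansatz.

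For piece (i), since $\ub=(\ub_1,0,0)$ depends only on $x_1$, a direct computation yields
\begin{equation*}
\e\dv\bigl(2\mu(\Tb)\mathbb{D}(\ub)+\lambda(\Tb)\dv\ub\,\mathbb{I}\bigr)
=\bigl(\px[(2\mu_1+\lambda_1)\e\Tb^{\alpha}\px\ub_1],\,0,\,0\bigr),
\end{equation*}
and the internal energy dissipation evaluates to exactly $(2\mu_1+\lambda_1)\e\Tb^{\alpha}|\px\ub|^{2}+\px(\e\kappa_1\Tb^{\alpha}\px\Tb)$. These are precisely the quantities appearing with a plus sign inside the norms of \eqref{est-h}, so subtracting them removes contribution (i) exactly, leaving only (ii) and (iii). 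For piece (ii), apply Lemma~\ref{lem-periodic-solution} with $k=3$ to obtain $\|(\rhot_\pm,\mt_\pm,\Et_\pm)\|_{W^{3,\infty}(\R^3)}\le C\eta_0 e^{-\e^{n_3}t}$; using that $(\rhob_\pm,\mb_\pm,\Eb_\pm)$ solves the full N-S system while $(\rho_\pm,m_\pm,\E_\pm)$ trivially does, the periodic residuals arrange into products of $(\rhot_\pm,\mt_\pm,\Et_\pm)$ with bounded coefficients, each derivative absorbing one factor of regularity. This yields the $W^{2,p}$ bound for the piece (ii) of $e_0$ and the $W^{1,p}$ bound for the pieces (ii) of $e_1,\dots,e_4$, with rate $e^{-\e^{n_2}t}$ for any $n_2<n_3$.

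Piece (iii) is where the work lies. The weight differences $\eta_i-\eta_j$ are $O(1)$ but multiply factors of the form $\rhot_\pm-\rhot_\mp$, $\mt_\pm-\mt_\mp$ or $\Et_\pm-\Et_\mp$, which are $O(\eta_0 e^{-\e^{n_3}t})$. Derivatives of the weights are controlled by Lemma~\ref{sr}(ii): $\|\nabla^{k}\eta_i\|_{L^\infty}\lesssim \|\px^{k}\ub_1\|_{L^\infty}$, uniformly bounded away from $t=0$. The main obstacle, and the reason the cut-off construction is essential, lies in handling the nonlinear viscosity coefficients: to replace $\mu(\Tt)$ by $\mu(\Tb)$ in $e_1,e_2,e_3$ and $\kappa(\Tt)$ by $\kappa(\Tb)$ in $e_4$, I would invoke the mean value theorem
\begin{equation*}
\mu(\Tt)-\mu(\Tb)=\alpha\mu_1\int_0^1[\sigma\Tt+(1-\sigma)\Tb]^{\alpha-1}\,d\sigma\,\cdot(\Tt-\Tb),
\end{equation*}
and control $\Tt-\Tb$ in $W^{2,\infty}$ via the ansatz formula \eqref{eq-ansatzs1}, which requires a strictly positive lower bound on the denominator $\rhot$. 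This lower bound is furnished precisely by the cut-off: $\Tb\ge e^{\bar S}\nu^{\gamma-1}>0$ and $\rhob\ge \nu>0$, so that $\rhot$ and $\Tt$ stay away from vacuum on the time interval we consider, and the factor $[\sigma\Tt+(1-\sigma)\Tb]^{\alpha-1}$ remains uniformly bounded. Combining all three pieces with the product and chain rules in $W^{k,p}(\Omega_\e)$ produces the claimed estimate \eqref{est-h}.
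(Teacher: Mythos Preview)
Your approach is correct and is precisely the standard argument that the paper defers to by citing \cite{HuangXuYuan2022,HuangXinXuYuan2023}; the paper itself omits the proof entirely. Your three-piece decomposition---(i) the viscous residual of the inviscid background $(\rhob,\ub,\Tb)$, which is exactly the subtracted bracket in \eqref{est-h}; (ii) the exponentially decaying periodic corrections via Lemma~\ref{lem-periodic-solution}; and (iii) the cross terms coming from the distinct weights $\eta_1,\eta_2,\eta_3$ and the nonlinear coefficients $\mu(\Tt),\kappa(\Tt)$---is the canonical structure of these ansatz-residual computations.

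One point worth making explicit in piece~(iii): the $L^p(\Omega_\e)$ integrability in the $x_1$-direction for $p<\infty$ comes not from the periodic factors (which are merely bounded) but from the \emph{localization} provided by $\px\eta_i$, since each $\eta_i$ depends only on $x_1$ through the rarefaction profile and its derivatives lie in $L^p(\R)$ by Lemma~\ref{sr}(ii). The non-localized pieces of the form $\eta_i\cdot(\text{periodic}_+)$ or $(1-\eta_i)\cdot(\text{periodic}_-)$ cancel exactly because $(\rhob_\pm,\mb_\pm,\Eb_\pm)$ are genuine N-S solutions, which you do note. Also, the weight derivatives $\px^k\eta_i$ carry negative powers of $\delta=\e^a$, so the constant $C$ in \eqref{est-h} either depends on $\e$ or, equivalently, these powers are absorbed by the smallness $\eta_0=O(\e^{20})$; your phrase ``uniformly bounded away from $t=0$'' slightly understates this dependence, but it does not affect the argument since $\eta_0$ dominates.
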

The proof is similar to the Lemma 2.4 
 in \cite{HuangXuYuan2022,HuangXinXuYuan2023} and will be omitted for brevity.

\subsection{Decomposition}

This subsection will decompose the solution into principal and transversal parts. Since the solution is periodic in $x'\in\Torus_\e^2$, without loss of generality, we set $\int_{\Torus_\e^2}1dx'={\e^{2b}}
$. Then we can define the following decompositions $\mathbf{D}_0$ and $\mathbf{D}_{\neq}$,
\begin{align}\label{def-decom}
	\D_{0} f:=\mathring{f}:=\int_{\Torus_\e^2}f dx',\qquad \D_{\neq} f:=\acute{f}:=f-\mathring{f},
\end{align} 
for an arbitrary function $f$ which is integrable on $\Torus_\e^2$. With simple analysis, the following propositions of $\D_0$ and $\D_{\neq}$ hold for any function $f$ on $\Torus_\e^2$. 
\begin{Prop}\label{prop-decom}
	For the projections $\Do$ and $\Dn$ defined in \cref{def-decom}, the following holds,
	
	i) $\Do\Dn f=\Dn\Do f=0$.
	
	ii) For any non-linear function $F$, one has
	\begin{align}
		\Do F(U)-F(\Do U)=O(1)F''(\D_0U)\D_0\big((\Dn U)^2\big),
	\end{align}
	and similar results hold for $\tilde{U}$, $\bar{U}$, etc.
	
	iii) $\|f\|^2_{L^2(\Omega_\e)}=\|\Do f\|^2_{L^2(\R)}+\|\Dn f\|^2_{L^2(\Omega_\e)}.$
\end{Prop}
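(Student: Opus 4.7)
The plan is to treat each of the three claims as a short computation, since $\Do$ and $\Dn$ are the averaging and oscillation projections in the transversal variable $x'\in\Torus_\e^2$.

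For part (i), I would simply observe that $\Do f(t,x_1)$ is $x'$-independent by construction, so $\Do$ is idempotent: $\Do\Do f=\Do f$. Both claimed identities then follow at once:
\begin{align*}
\Do\Dn f=\Do f-\Do\Do f=0,\qquad \Dn\Do f=\Do f-\Do\Do f=0.
\end{align*}

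For part (ii), I would Taylor-expand the scalar nonlinearity $F$ around the base point $\Do U$ and exploit (i). Writing
\begin{align*}
F(U)=F(\Do U)+F'(\Do U)\,\Dn U+\tfrac{1}{2}F''(\xi)(\Dn U)^2
\end{align*}
for some intermediate point $\xi$, applying $\Do$ leaves $F(\Do U)$ untouched (it is already $x'$-constant), cancels the linear term via $\Do\Dn U=0$, and produces $\Do F(U)-F(\Do U)=\tfrac{1}{2}\Do\bigl[F''(\xi)(\Dn U)^2\bigr]$. Under the standing a priori bounds on $U$, $F''(\xi)$ differs from $F''(\Do U)$ by a bounded factor, which I absorb into the $O(1)$ prefactor to obtain the stated form; the $\tilde U,\bar U$ variants follow by the same expansion around the corresponding base point.

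For part (iii), I would write $f=\Do f+\Dn f$ and expand the square. The diagonal terms give the two summands on the right-hand side (the first collapses to $\|\Do f\|^2_{L^2(\R)}$ after the trivial $x'$-integration, since $\Do f$ is $x'$-independent), while the cross term vanishes because
\begin{align*}
\int_{\Omega_\e}(\Do f)(\Dn f)\,dx=\int_\R \Do f\,\Bigl(\int_{\Torus_\e^2}\Dn f\,dx'\Bigr)\,dx_1=0,
\end{align*}
using that $\int_{\Torus_\e^2}\Dn f\,dx'=0$ by the defining relation $\Dn f=f-\Do f$.

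The only mildly delicate step is the $O(1)$ claim in part (ii): it implicitly requires $F\in C^2$ with second derivative bounded along the segment joining $\Do U$ and $U$, which is guaranteed in later sections by the smallness condition \eqref{eqe095} combined with the a priori assumptions on the perturbation. Parts (i) and (iii) are essentially definitional, so no further obstacle is anticipated.
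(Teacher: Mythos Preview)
Your argument is correct and is exactly the elementary computation the paper has in mind; the paper itself omits the proof, stating only that it ``is basic and we omit it.'' Each of your three steps (idempotence of $\Do$, second-order Taylor expansion around $\Do U$ with the linear term killed by $\Do\Dn U=0$, and orthogonality of $\Do f$ and $\Dn f$ in $L^2$) is the standard verification, so nothing needs to be changed.
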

The proof of \cref{prop-decom} is basic and we omit it.

\begin{Lem}\label{lem6}\cite{Yuan2023SIAM,Yuanarxiv}
{For any $p\in[1,+\infty]$, it holds that 
\begin{align}
\begin{aligned}
&\|\Do f\|_{L^p(\R)}\le\|f\|_{L^p(\Omega_\e)},\ \ 
& \|\Dn f\|_{L^p(\Omega_\e)}\le\|f\|_{L^p(\Omega_\e)}+\|\Do f\|_{L^p(\R)}\le\|f\|_{L^p(\Omega_\e)}.
\end{aligned}
\end{align}}
\end{Lem}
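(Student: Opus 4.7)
The plan is to derive both inequalities from elementary properties of the projections $\Do,\Dn$ defined in \cref{def-decom}: Jensen's inequality for the first, and the triangle inequality combined with the first for the second. Since the analytic content is minimal, I expect the whole argument to fit in a few lines once the normalization on $\Torus_\e^2$ is pinned down.

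For the first estimate, I would regard $\Do f(x_1)$ as the $x'$-average of $f(x_1,\cdot)$ over $\Torus_\e^2$. For $1\le p<\infty$, Jensen's inequality applied to the convex function $t\mapsto|t|^p$ gives the pointwise-in-$x_1$ bound
\[
|\Do f(x_1)|^p\le\frac{1}{|\Torus_\e^2|}\int_{\Torus_\e^2}|f(x_1,x')|^p\,dx',
\]
after which integrating in $x_1\in\R$ and invoking Fubini concludes the first inequality. The endpoint case $p=\infty$ is immediate, since the average is pointwise dominated by the supremum, so $\Do$ is also a contraction on $L^\infty$.

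For the second estimate, I would write $\Dn f=f-\Do f$ and apply the triangle inequality in $L^p(\Omega_\e)$; then, noting that $\Do f$ is independent of $x'$, its $L^p(\Omega_\e)$-norm reduces to its $L^p(\R)$-norm up to the transverse volume factor $|\Torus_\e^2|=\e^{2b}$. Substituting the first inequality closes the chain.

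The only mild obstacle is notational: one must carry the convention $\int_{\Torus_\e^2}1\,dx'=\e^{2b}$ consistently through both steps so that the volume factors cancel as claimed. Otherwise the argument is entirely routine and amounts to the standard ``zero-mode versus non-zero-mode'' decomposition on a flat torus, which is why the result can be attributed to the references cited.
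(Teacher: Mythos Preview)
Your proposal is correct and is essentially what the paper has in mind. The paper does not give a detailed proof; it simply records that the lemma ``can be deduced directly by making use of Cauchy's inequality and Poincar\'e's inequality,'' citing \cite{Yuan2023SIAM,Yuanarxiv}. Your use of Jensen (equivalently H\"older, which is what ``Cauchy's inequality'' refers to here) for the first bound and the triangle inequality for the second is the standard and cleanest route; Poincar\'e's inequality is not actually needed for the inequalities as stated, so your argument is if anything slightly more direct than the paper's hint suggests. One small remark: be careful that in the paper's convention $\Do f=\int_{\Torus_\e^2}f\,dx'$ is the \emph{integral}, not the normalized average, so the Jensen step should really be written via H\"older, producing the factor $|\Torus_\e^2|^{1-1/p}=\e^{2b(1-1/p)}\le 1$, which is then absorbed; you flag this normalization issue yourself, so the argument goes through. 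Note also that the final displayed inequality in the statement contains an obvious typo (the chain should end with $\le 2\|f\|_{L^p(\Omega_\e)}$, not $\le\|f\|_{L^p(\Omega_\e)}$), and your approach correctly yields the factor $2$.
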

{The proof of Lemma \ref{lem6} can be deduced directly by making use of Cauchy's inequality and Poincar
{\'e}'s inequality.}

\subsection{Gagliardo-Nirenberg type inequality}

\begin{Lem}
\label{Lem-GN}
	Let $\Omega_\Lambda:=\R\times\Torus_{\Lambda}^2$, where $\Torus_{\Lambda}:=[0,\Lambda]$ and $\Lambda$ is a positive constant. Assume that $ u(x) $ is in the $ L^q(\Omega_{{\Lambda}}) $ space with $ \nabla^m u \in L^r(\Omega_{{\Lambda}}), $ where $ 1\leq q,r\leq +\infty $ and $ m\geq 1, $ and $ u $ is periodic with respect to $ x_2 $ and $ x_3. $ Then there exists a decomposition $ u(x) = \sum\limits_{k=1}^{3} u^{(k)}(x) $ such that each $ u^{(k)} $ satisfies the $ k $-dimensional G-N inequality,
	\begin{equation}\label{G-N-type-1}
		\norm{\nabla^j u^{(k)}}_{L^p(\Omega_{{\Lambda}})} \leq C \Lambda^{-\frac{n-k}{k}(m\T_k-j)} \norm{\nabla^{m} u}_{L^{r}(\Omega_{{\Lambda}})}^{\theta_k} \norm{u}_{L^{q}(\Omega_{{\Lambda}})}^{1-\theta_k},\ \ 1\le k\le n, n=3,
	\end{equation}
	where the positive constant $C$ is independent of $\Lambda$, $ 0\leq j< m $ is any integer and $ 1\leq p \leq +\infty $ is any number, satisfying 
	\begin{align}\label{equality}
	 \frac{1}{p} = \frac{j}{k} + \Big(\frac{1}{r}-\frac{m}{k}\Big) \theta_k + \frac{1}{q}\big(1-\theta_k\big) \quad \text{with} \quad \frac{j}{m} \leq \theta_k \leq 1. 
	 \end{align}
	Moreover, it holds that 
	\begin{equation}\label{G-N-type-2}
		\norm{\nabla^j u}_{L^p(\Omega_{{\Lambda}})} \leq C \Lambda^{-\frac{n-k}{k}(m\T_k-j)}\sum_{k=1}^3 \norm{\nabla^{m} u}_{L^{r}(\Omega_{{\Lambda}})}^{\theta_k} \norm{u}_{L^{q}(\Omega_{{\Lambda}})}^{1-\theta_k}.
	\end{equation}
	\end{Lem}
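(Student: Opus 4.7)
The strategy is to decompose $u$ according to its zero-mode structure in the periodic variables, so that each piece effectively lives on a $k$-dimensional domain and can be handled by the standard $k$-dimensional Gagliardo-Nirenberg inequality. Concretely, I would set
\[
u^{(1)}(x_1):=\tfrac{1}{\Lambda^{2}}\int_{\Torus_{\Lambda}^{2}}u(x_1,x')\,dx',\qquad
u^{(2)}(x_1,x_2):=\tfrac{1}{\Lambda}\int_{\Torus_{\Lambda}}u(x_1,x_2,x_3)\,dx_3-u^{(1)}(x_1),
\]
and $u^{(3)}:=u-u^{(1)}-u^{(2)}$, so that $u^{(k)}$ depends effectively on only $k$ of the three variables and $u=\sum_{k=1}^{3}u^{(k)}$. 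Since the averaging operators defining $u^{(1)},u^{(2)}$ are $L^{p}$-contractions by Jensen's inequality (a minor extension of \cref{lem6}), commute with $\partial_{x_1}$, and annihilate derivatives in the averaged directions, one immediately obtains the inheritance bounds
\[
\|u^{(k)}\|_{L^{q}(\Omega_{\Lambda})}\leq C\|u\|_{L^{q}(\Omega_{\Lambda})},\qquad \|\nabla^{m}u^{(k)}\|_{L^{r}(\Omega_{\Lambda})}\leq C\|\nabla^{m}u\|_{L^{r}(\Omega_{\Lambda})}.
\]
Moreover, since $u^{(k)}$ is constant in the $3-k$ suppressed variables, the pullback identity $\|u^{(k)}\|_{L^{p}(\Omega_{\Lambda})}=\Lambda^{(3-k)/p}\|u^{(k)}\|_{L^{p}(\Omega_{k})}$ holds with $\Omega_{k}:=\R\times\Torus_{\Lambda}^{k-1}$, and analogously for $L^{q}$ and for $L^{r}$ of $\nabla^{m}$.

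Next, I would apply the classical $k$-dimensional Gagliardo-Nirenberg inequality on $\Omega_{k}$ to $u^{(k)}$. For $k=1$ this is the textbook 1-D G-N on $\R$; for $k=2,3$ it is obtained on $\R\times\Torus_{\Lambda}^{k-1}$ with a $\Lambda$-independent constant by the change of variables $x_{i}\mapsto x_{i}/\Lambda$ for $i\geq 2$, which reduces the domain to the fixed one $\R\times\Torus_{1}^{k-1}$ on which the universal $k$-dimensional G-N holds by standard interpolation/Fourier multiplier arguments. Rescaling back and combining with the pullback identity yields
\begin{align*}
\|\nabla^{j}u^{(k)}\|_{L^{p}(\Omega_{\Lambda})}
&=\Lambda^{(3-k)/p}\|\nabla^{j}u^{(k)}\|_{L^{p}(\Omega_{k})}\\
&\leq C\,\Lambda^{(3-k)/p}\|\nabla^{m}u^{(k)}\|_{L^{r}(\Omega_{k})}^{\theta_{k}}\|u^{(k)}\|_{L^{q}(\Omega_{k})}^{1-\theta_{k}}\\
&\leq C\,\Lambda^{(3-k)\left(\frac{1}{p}-\frac{\theta_{k}}{r}-\frac{1-\theta_{k}}{q}\right)}\|\nabla^{m}u\|_{L^{r}(\Omega_{\Lambda})}^{\theta_{k}}\|u\|_{L^{q}(\Omega_{\Lambda})}^{1-\theta_{k}}.
\end{align*}
Substituting the G-N scaling identity \cref{equality}, the $\Lambda$-exponent collapses to exactly $-\tfrac{n-k}{k}(m\theta_{k}-j)$ with $n=3$, which is \cref{G-N-type-1}. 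The bound \cref{G-N-type-2} then follows by the triangle inequality $\|\nabla^{j}u\|_{L^{p}}\leq\sum_{k=1}^{3}\|\nabla^{j}u^{(k)}\|_{L^{p}}$.

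The main technical obstacle I anticipate is the careful tracking of $\Lambda$-powers throughout the rescaling, since anisotropic derivatives on $\R\times\Torus_{\Lambda}^{k-1}$ pick up different $\Lambda$-factors than the transverse $L^{p}$-integration does, and one must verify that mixed derivative norms $\|\nabla^{j}u^{(k)}\|$ (summed over all multi-indices of order $j$) behave well under the rescaling $x_{i}\mapsto x_{i}/\Lambda$ for $i\geq 2$. The algebraic miracle that makes everything collapse into the clean $-\tfrac{n-k}{k}(m\theta_{k}-j)$ exponent is precisely the homogeneity relation \cref{equality}, and an off-by-one error in the intermediate bookkeeping would break the identification with the claimed factor. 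A secondary point to check is the endpoint cases $\theta_{k}\in\{j/m,1\}$, which correspond to classical Sobolev embeddings and require no additional argument beyond ensuring the exponent relations remain in the admissible range.
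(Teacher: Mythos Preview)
Your approach is different from the paper's and more constructive: you build the decomposition $u=\sum u^{(k)}$ explicitly via iterated averaging over the periodic directions and then extract the $\Lambda$-exponent through the pullback $\Omega_\Lambda\to\Omega_k$. The paper instead quotes Theorem~1.4 of \cite{HY2020} as a black box---which already supplies the decomposition and the inequality on $\R\times\Torus^{n-1}$---and recovers the $\Lambda$-factor by a single \emph{isotropic} rescaling $x\mapsto x/\Lambda$ of all three coordinates. Your explicit decomposition is essentially what \cite{HY2020} constructs, and your final exponent computation $(3-k)\bigl(\tfrac{1}{p}-\tfrac{\theta_k}{r}-\tfrac{1-\theta_k}{q}\bigr)=-\tfrac{n-k}{k}(m\theta_k-j)$ is correct.

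There is, however, a genuine gap at one step. You claim the $k$-dimensional G-N on $\Omega_k=\R\times\Torus_\Lambda^{k-1}$ holds with $\Lambda$-independent constant via the \emph{anisotropic} rescaling $x_i\mapsto x_i/\Lambda$ for $i\ge2$ only. It does not. Under that map $\partial_{x_1}$ and $\partial_{x_i}$ ($i\ge2$) scale by different powers of $\Lambda$, so $\|\nabla^m u^{(k)}\|_{L^r}$ transforms into an anisotropic combination that cannot be bounded by $\|\nabla^m\tilde u^{(k)}\|_{L^r}$ uniformly in $\Lambda$. Concretely, take $k=2$, $j=0$, $m=1$, $p=4$, $q=r=2$, $\theta_2=\tfrac12$: the anisotropic rescaling yields
\[
\|u\|_{L^4}\le C\bigl(\Lambda^{-1}\|\partial_{x_1}u\|_{L^2}^{2}+\Lambda\,\|\partial_{x_2}u\|_{L^2}^{2}\bigr)^{1/4}\|u\|_{L^2}^{1/2},
\]
which blows up as $\Lambda\to0$. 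The relation \cref{equality} does \emph{not} save this---it is calibrated to isotropic scaling, not to a partial one. The remedy is straightforward: rescale all $k$ coordinates of $\Omega_k$ isotropically (including $x_1$; harmless since $\R$ is scale-invariant). Then every derivative picks up the same factor $\Lambda$, the exponent collapses to zero exactly by \cref{equality}, and you are reduced to G-N on the fixed cylinder $\R\times[0,1]^{k-1}$. For $k=1$ this is classical; for $k\ge2$ it is precisely the nontrivial content of \cite{HY2020}. So either route ultimately rests on that reference---yours just unpacks more of what it contains.
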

\begin{proof}
Similar to 
 the Theorem 1.4 in \cite{HY2020}, we consider the region $\Omega_{\Lambda^2}:=\R\times\mathbb{T}^2_{\Lambda^2}$, where $\mathbb{T}_{\Lambda^2}=[0,\Lambda^2]$, $v$ is periodic in the $x_i$ direction for $i=2,\cdots, n, $ it holds that 
\begin{align}\label{eq0106}
\|\nabla^jv\|_{L^p(\Omega_{\Lambda^2})}\le C\sum_{k=1}^{n}\|\nabla^mv\|_{L^r(\Omega_{\Lambda^2})}^{\theta_k}\|v\|_{L^q(\Omega_{\Lambda^2})}^{1-\theta_k},\ \ v = \sum\limits_{k=1}^{n} v^{(k)},
\end{align}
where positive constant $C$ is independent of $\Lambda$.
Set $u^{(k)}(x)=
\Lambda^{\hat\alpha}v^{(k)}(\frac{y}{\Lambda})
$, $y\in\Omega_{\Lambda^2}$, $x=\frac{y}{\Lambda}\in\Omega
_\Lambda
$ and $\hat\alpha$ is dependent on $q$ and $n$ to be determined such that $\int_{\Omega_\Lambda
}u^q(x)dx=1$. We rewrite $u^{(k)}(x)$ and $v^{(k)}(\frac{y}{\Lambda})$ as $u(x)$ and $v(\frac{y}{\Lambda})$ for convenience. 
Let $$\bar\Lambda(\Omega_\Lambda
)=\inf\left\{\frac{\|\nabla^j u\|_{L^p(\Omega_\Lambda
)}}{\|\nabla^mu\|_{L^r(\Omega_\Lambda
)}^{\theta_k}}
\big|
 \|u\|_{L^q(\Omega_\Lambda)}^q=1
\right\}
,\ \ \bar\Lambda(\Omega_{\Lambda^2})=\inf\left\{\frac{\|\nabla^j v\|_{L^p(\Omega_{\Lambda^2})}}{\|\nabla^mv\|_{L^r(\Omega_{\Lambda^2})}^{\theta_k}}\big|
 \|v\|_{L^q(\Omega_{\Lambda^2})}^q=1
\right\}
.$$ 
 Here we take $\hat\alpha=-\frac{n}{q}$.  Then 
\begin{align}\label{eq0105}
\begin{aligned}
&\int_{\Omega_\Lambda
}|\nabla^ju(x)|^{p}dx=\Lambda^{(-\frac{n}{q}-j)p}\int_{\Omega_\Lambda
}|\nabla^jv(y)|^{p}dx=\Lambda^{(-\frac{n}{q}-j)p+n}\int_{\Omega_{\Lambda^2}}|\nabla^mv(y)|^{r}dy,\\
&\int_{\Omega_\Lambda
}|\nabla^mu(x)|^{r}dx=\Lambda^{(-\frac{n}{q}-m)r}\int_{\Omega_\Lambda
}|\nabla^mv(y)|^{r}dx=\Lambda^{(-\frac{n}{q}-m)r+n}\int_{\Omega_{\Lambda^2}}|\nabla^mv(y)|^{r}dy.
\end{aligned}
\end{align}
So we have the optimal constant
\begin{align}\label{eq0107}
\bar\Lambda(\Omega_{\Lambda^2}):=\frac{\|\nabla^jv\|_{L^p(\Omega_{\Lambda^2})}}{\|\nabla^mv\|_{L^r(\Omega_{\Lambda^2})}^{\theta_k}}=\Lambda^{\frac{n}{p}-\frac{n}{q}(1-\theta_k)-\frac{\theta_k}{r}n-j+m\theta_k}
\frac{\|\nabla^ju\|_{L^p(\Omega_\Lambda
)}}{\|\nabla^mu\|_{L^r(\Omega_\Lambda
)}^{\theta_k}}=\Lambda^{-\frac{n-k}{k}(m\T_k-j)}\bar\Lambda(\Omega_\Lambda
),
\end{align}
 for $1\le k\le n$.
 Combining \eqref{eq0106} with \eqref{eq0107}, we can obtain \eqref{G-N-type-1} and \eqref{G-N-type-2} by the decomposition.
\end{proof}

We list the following special cases for the sake of convenience.
\begin{Cor}
	Under the same assumptions of \cref{Lem-GN}, one has,
	\begin{align}\label{eq46i}
		\begin{aligned}
		\|u\|_{L^4(\Omega_{{\Lambda}})}\leq& C\big(\Lambda^{-\frac{1}{2}}
		\|\nabla u\|^{\frac{1}{4}}_{L^2(\Omega_{{\Lambda}})}\| u\|^{\frac{3}{4}}_{L^2(\Omega_{{\Lambda}})}+\Lambda^{-\frac{1}{4}}
		\|\nabla u\|^{\frac{1}{2}}_{L^2(\Omega_{{\Lambda}})}\| u\|^{\frac{1}{2}}_{L^2(\Omega_{{\Lambda}})}+\|\nabla u\|^{\frac{3}{4}}_{L^2(\Omega_{{\Lambda}})}\| u\|^{\frac{1}{4}}_{L^2(\Omega_{{\Lambda}})}\big),\\
		\|u\|_{L^6(\Omega_{{\Lambda}})}\leq &C\Lambda^{-\frac{2}{3}}
		\|\nabla u\|^{\frac{1}{3}}_{L^2(\Omega_{{\Lambda}})}\| u\|^{\frac{2}{3}}_{L^2(\Omega_{{\Lambda}})}+C\Lambda^{-\frac{1}{3}}
		\|\nabla u\|^{\frac{2}{3}}_{L^2(\Omega_{{\Lambda}})}\| u\|^{\frac{1}{3}}_{L^2(\Omega_{{\Lambda}})}+C\|\nabla u\|_{L^2(\Omega_{{\Lambda}})},\\
		\|u\|_{L^{\infty}(\Omega_{{\Lambda}})}\leq& C\Lambda^{-1}\|\nabla u\|^{\frac{1}{2}}_{L^2(\Omega_{{\Lambda}})}\| u\|^{\frac{1}{2}}_{L^2(\Omega_{{\Lambda}})}+C\Lambda^{-\frac{1}{2}}
		\|\nabla u\|_{L^2(\Omega_{{\Lambda}})}+C\|\nabla^2 u\|^{\frac{3}{4}}_{L^2(\Omega_{{\Lambda}})}\| u\|^{\frac{1}{4}}_{L^2(\Omega_{{\Lambda}})}.
	\end{aligned}
	\end{align}
\end{Cor}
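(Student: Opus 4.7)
The plan is to apply \cref{Lem-GN} (specifically the decomposed inequality \eqref{G-N-type-2}) three times, each time choosing the parameters $p$, $q$, $r$, $j$, $m$ that match the target norm on the left and then letting $k$ range over $1,2,3$ in the summation. In every case I would set $q = r = 2$ and $j = 0$, so the scaling identity \eqref{equality} becomes $\frac{1}{p} = \bigl(\frac{1}{2} - \frac{m}{k}\bigr)\theta_k + \frac{1}{2}(1-\theta_k)$, which solves to $\theta_k = \frac{k(1/2-1/p)}{m}$, and the prefactor reduces to $\Lambda^{-\frac{(3-k)}{k}m\theta_k}$.

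For the $L^4$ bound I would take $m=1$, $p=4$; this gives $\theta_k = k/4$ and prefactor $\Lambda^{-(3-k)/4}$, so the three summands in \eqref{G-N-type-2} are exactly $\Lambda^{-1/2}\|\nabla u\|^{1/4}\|u\|^{3/4}$, $\Lambda^{-1/4}\|\nabla u\|^{1/2}\|u\|^{1/2}$, and $\|\nabla u\|^{3/4}\|u\|^{1/4}$, matching the first estimate in \eqref{eq46i}. For the $L^6$ bound I would take $m=1$, $p=6$, giving $\theta_k = k/3$ and prefactor $\Lambda^{-(3-k)/3}$; the $k=3$ term collapses to $C\|\nabla u\|_{L^2}$ since $\theta_3 = 1$, producing the second estimate verbatim.

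The $L^\infty$ bound requires a small twist. Choosing $p = \infty$ with $m=1$ forces $\theta_k = k/2$, which is admissible only for $k=1$ (giving $\Lambda^{-1}\|\nabla u\|^{1/2}\|u\|^{1/2}$) and $k=2$ (giving $\Lambda^{-1/2}\|\nabla u\|_{L^2}$ with $\theta_2 = 1$); the value $\theta_3 = 3/2$ violates the admissibility constraint $\theta_k \leq 1$ in \eqref{equality}. To pick up the $k=3$ contribution I would instead apply \cref{Lem-GN} again with $m=2$, which yields $\theta_3 = 3/4$ (admissible) and prefactor $\Lambda^{0} = 1$, producing $\|\nabla^2 u\|^{3/4}\|u\|^{1/4}$. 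Adding these three pieces gives the third estimate in \eqref{eq46i}.

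The only subtle point is recognizing that the admissibility range $j/m \leq \theta_k \leq 1$ in \eqref{equality} forces one to increase $m$ from $1$ to $2$ when $k=3$ in the $L^\infty$ case; beyond that the corollary is purely a matter of evaluating \eqref{G-N-type-2} at the correct parameters and no additional work is required.
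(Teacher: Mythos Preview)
Your proposal is correct and is exactly the intended derivation: the paper presents this corollary without proof, calling it a list of ``special cases for the sake of convenience'' of \cref{Lem-GN}, and your parameter choices reproduce each line of \eqref{eq46i} verbatim. The one point worth making explicit is that in the $L^\infty$ case you are applying \eqref{G-N-type-1} with $m=1$ to $u^{(1)}$ and $u^{(2)}$ and with $m=2$ to $u^{(3)}$; this is legitimate because the decomposition $u=\sum_k u^{(k)}$ in \cref{Lem-GN} (inherited from \cite{HY2020}) is fixed independently of the exponents, so each piece satisfies the full family of $k$-dimensional G--N inequalities and one may pick a different admissible $(m,\theta_k)$ for each $k$ before summing via $\|u\|_{L^\infty}\le\sum_k\|u^{(k)}\|_{L^\infty}$.
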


\begin{Lem}[\cite{Nirenberg}, Theorem 1]\label{Lem-GN2}
	Set $\Torus_{\Lambda}:=[0,\Lambda]$ and $\Lambda$ is a positive constant, $\Torus_{\Lambda}^n$ is the bounded domain. Assume that $ u(x) $ is in the $ L^q(\Torus_{\Lambda}^n) $ space with $ \nabla^m u \in L^r(\Torus_{\Lambda}^n), $ where $ 1\leq q,r\leq +\infty $. Then $u(x)$ satisfies the extended G-N inequality,
	\begin{equation}\label{G-N-type-3}
	\norm{\nabla^j u}_{L^p(\Torus_{\Lambda}^n)} \leq C_1 \norm{\nabla^{m} u}_{L^{r}(\Torus_{\Lambda}^n)}^{\theta} \norm{u}_{L^{q}(\Torus_{\Lambda}^n)}^{1-\theta}+C_2\Lambda^{\frac{n}{p}-\frac{n}{q}-j}\norm{u}_{L^{q}(\Torus_{\Lambda}^n)},\ n=3, 
	\end{equation}
	where the positive constant $C_1,C_2$ is independent of $\Lambda$, $ 0\leq j< m $ is any integer and $ 1\leq p \leq +\infty $ is any number, satisfying
	\begin{align}\label{eq109}
	\frac{1}{p} = \frac{j}{n} + \Big(\frac{1}{r}-\frac{m}{n}\Big) \theta + \frac{1}{q}\big(1-\theta\big) \quad \text{with} \quad \frac{j}{m} \leq \theta \leq 1.
	\end{align}
	Unless $1< r< +\infty $ and $m-j-\frac{n}{r}$ is a nonnegative integer, in which \eqref{G-N-type-3} holds only for $\T$ satisfying $\frac{j}{m} \leq \theta < 1$.

	\begin{proof}
		
		 We consider the region $\Torus^n=[0,1]^n$, for $v=v(y), y\in \Torus^n$, it holds that
		\begin{align}\label{eqs0110}
		\norm{\nabla^j v}_{L^p(\Torus^n)} \leq C_1 \norm{\nabla^{m} v}_{L^{r}(\Torus^n)}^{\theta} \norm{v}_{L^{q}(\Torus^n)}^{1-\theta}+C_2\norm{v}_{L^{q}(\Torus^n)},\ n=3,
		\end{align}
		where the positive constants $C_1,C_2$ are independent of $\Lambda$.
		Set $
		v(y)=u(\Lambda y)$, $y\in \Torus^n$, $x=\Lambda y\in\Torus_{\Lambda}^n
		$.
	   Then
		\begin{align}\label{eq0111}
		\begin{aligned}
		&\bigg(\int_{\Torus^n}|\nabla^jv(y)|^{p}dy\bigg)^{\frac{1}{p}}=\Lambda^{(-\frac{n}{p}+j)}\bigg(\int_{\Torus_{\Lambda}^n}|\nabla^ju(x)|^{p}dx\bigg)^{\frac{1}{p}}=\Lambda^{(j-\frac{n}{p})}\norm{\nabla^j u}_{L^p(\Torus_{\Lambda}^n)},\\
		&\bigg(\int_{\Torus^n}|\nabla^mv(y)|^{r}dy\bigg)^{\frac{1}{r}}=\Lambda^{(-\frac{n}{r}+m)}\bigg(\int_{\Torus_{\Lambda}^n}|\nabla^ju(x)|^{r}dx\bigg)^{\frac{1}{r}}=\Lambda^{(m-\frac{n}{r})}\norm{\nabla^m u}_{L^r(\Torus_{\Lambda}^n)},\\
		&\bigg(\int_{\Torus^n}|v(y)|^{q}dy\bigg)^{\frac{1}{q}}=\Lambda^{-\frac{n}{q}}\bigg(\int_{\Torus_{\Lambda}^n}|u(x)|^{q}dx\bigg)^{\frac{1}{q}}=\Lambda^{-\frac{n}{q}}\norm{ u}_{L^q(\Torus_{\Lambda}^n)}.\\
		\end{aligned}
		\end{align}
Inserting \eqref{eq0111} into \eqref{eqs0110},  we have \eqref{G-N-type-3}.

	\end{proof}

\end{Lem}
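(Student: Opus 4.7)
The plan is to reduce the stated inequality on the cube $\Torus_\Lambda^n$ of side $\Lambda$ to the classical Nirenberg interpolation inequality on the fixed unit cube $\Torus^n=[0,1]^n$, and then transfer it back to $\Torus_\Lambda^n$ by a scaling argument, carefully tracking how each $L^p$-norm of a derivative transforms under dilation.

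First, I would invoke Nirenberg's original theorem \cite{Nirenberg} on the bounded Lipschitz domain $\Torus^n$: for any $v\in L^q(\Torus^n)$ with $\nabla^m v\in L^r(\Torus^n)$ and under the exponent condition \eqref{eq109}, one has
\begin{align*}
\norm{\nabla^j v}_{L^p(\Torus^n)} \leq C_1 \norm{\nabla^m v}_{L^r(\Torus^n)}^{\theta}\norm{v}_{L^q(\Torus^n)}^{1-\theta} + C_2 \norm{v}_{L^q(\Torus^n)},
\end{align*}
with constants $C_1,C_2$ independent of $\Lambda$. The additive lower-order term is the characteristic feature of the Nirenberg inequality on bounded domains without boundary conditions; it cannot in general be absorbed into the first term, and the excluded endpoint $1<r<+\infty$ with $m-j-\tfrac{n}{r}\in\mathbb{Z}_{\geq 0}$ is inherited directly from Nirenberg's statement.

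Second, I would perform the scaling $v(y):=u(\Lambda y)$ for $y\in\Torus^n$, so that $x=\Lambda y$ ranges over $\Torus_\Lambda^n$. A direct change of variables yields
\begin{align*}
\norm{\nabla^j v}_{L^p(\Torus^n)}=\Lambda^{j-n/p}\norm{\nabla^j u}_{L^p(\Torus_\Lambda^n)},\quad \norm{\nabla^m v}_{L^r(\Torus^n)}=\Lambda^{m-n/r}\norm{\nabla^m u}_{L^r(\Torus_\Lambda^n)},\quad \norm{v}_{L^q(\Torus^n)}=\Lambda^{-n/q}\norm{u}_{L^q(\Torus_\Lambda^n)}.
\end{align*}
Plugging these identities into the unit-cube inequality and dividing through by $\Lambda^{j-n/p}$, the scaling relation \eqref{eq109} forces the exponent $\theta(m-\tfrac{n}{r})+(1-\theta)(-\tfrac{n}{q})-(j-\tfrac{n}{p})$ in front of the interpolation term to vanish identically, recovering the $C_1$ term of \eqref{G-N-type-3}. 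The residual $\Lambda$-power in front of the lower-order term then collects to precisely $\Lambda^{n/p-n/q-j}$, giving \eqref{G-N-type-3}.

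The only obstacle is algebraic bookkeeping: verifying once and for all that the power of $\Lambda$ multiplying the interpolation term cancels under \eqref{eq109} — which is simply the scale invariance of the standard GN exponents — so that the sole surviving $\Lambda$-factor lives on the additive lower-order term, where no interpolation exponents are available to balance it. Everything else is a direct application of Nirenberg's theorem on the unit cube, which I would quote rather than reprove.
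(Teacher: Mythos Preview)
Your proposal is correct and follows essentially the same approach as the paper: quote Nirenberg's inequality on the unit cube $\Torus^n=[0,1]^n$, apply the dilation $v(y)=u(\Lambda y)$, compute the scaling of each norm, and substitute back. Your additional remark that the exponent condition \eqref{eq109} is exactly what forces the $\Lambda$-power on the interpolation term to vanish is a helpful clarification that the paper leaves implicit.
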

We list the following special cases for the sake of convenience.
\begin{Cor}
	Under the same assumptions of \ref{Lem-GN2}, one has,
	\begin{align}\label{GN2}
	\begin{aligned}
	\|u\|_{L^4(\Torus_{\Lambda}^3)}\leq& C_1\|\nabla u\|^{\frac{3}{4}}_{L^2(\Torus_{\Lambda}^3)}\| u\|^{\frac{1}{4}}_{L^2(\Torus_{\Lambda}^3)}+C_2\Lambda^{-\frac{3}{4}}\|u\|_{L^2(\Torus_{\Lambda}^3)},\\
	\|u\|_{L^6(\Torus_{\Lambda}^3)}\leq &C_1\|\nabla u\|_{L^2(\Torus_{\Lambda}^3)}+C_2\Lambda^{-1}\|u\|_{L^2(\Torus_{\Lambda}^3)},\\
	\|u\|_{L^{\infty}(\Torus_{\Lambda}^3)}\leq& C_1\|\nabla^2 u\|^{\frac{3}{4}}_{L^2(\Torus_{\Lambda}^3)}\| u\|^{\frac{1}{4}}_{L^2(\Torus_{\Lambda}^3)}+C_2\Lambda^{-\frac{3}{2}}\|u\|_{L^2(\Torus_{\Lambda}^3)}.
	\end{aligned}
	\end{align}
\end{Cor}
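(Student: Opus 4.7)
The claim is just a catalogue of three specialisations of the scaled Gagliardo--Nirenberg inequality \eqref{G-N-type-3} established in \cref{Lem-GN2}, so the plan is simply to verify that each line arises by inserting an admissible choice of exponents $(j,m,p,q,r,\theta)$ into \eqref{G-N-type-3} with $n=3$, and then reading off the correct power of $\Lambda$ from the prefactor $\Lambda^{\frac{n}{p}-\frac{n}{q}-j}$ in the lower-order term. Nothing additional beyond \cref{Lem-GN2} itself is required.

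For the $L^{4}$ estimate I take $m=1$, $j=0$, $p=4$, $q=r=2$. The scaling identity \eqref{eq109} reduces to $\tfrac14=(\tfrac12-\tfrac13)\theta+\tfrac12(1-\theta)=\tfrac12-\tfrac{\theta}{3}$, giving $\theta=\tfrac34$, which lies in $[j/m,1]=[0,1]$, so the inequality applies. The lower-order prefactor is $\Lambda^{\frac{3}{4}-\frac{3}{2}}=\Lambda^{-3/4}$, matching the first line. For the $L^{6}$ bound I keep $m=1$, $j=0$, $r=q=2$ but now $p=6$; then \eqref{eq109} forces $\theta=1$, and $\Lambda^{\frac{1}{2}-\frac{3}{2}}=\Lambda^{-1}$, yielding the stated form (note that $r<\infty$ and $m-j-n/r=1-0-3/2=-1/2$ is not a nonnegative integer, so the endpoint $\theta=1$ is admitted by the lemma). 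For the $L^{\infty}$ bound I choose $m=2$, $j=0$, $p=\infty$, $q=r=2$, from which \eqref{eq109} gives $0=-\tfrac{\theta}{6}+\tfrac12-\tfrac{\theta}{2}$, i.e.\ $\theta=\tfrac34\in[0,1]$, and the scaling factor is $\Lambda^{-\frac{n}{q}}=\Lambda^{-3/2}$. Again the caveat does not bite since $m-j-n/r=2-0-3/2=1/2$ is not a nonnegative integer.

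The only minor subtlety, and the one place where I would pause, is to check that the endpoint restriction at the end of \cref{Lem-GN2} --- which forbids $\theta=1$ in the case $1<r<\infty$ and $m-j-n/r\in\mathbb{Z}_{\geq 0}$ --- does not interfere with any of the three specialisations above. A quick case check shows that in each case either $\theta<1$ (the $L^{4}$ and $L^{\infty}$ lines) or $m-j-n/r$ is not a nonnegative integer (the $L^{6}$ line), so \eqref{G-N-type-3} is applicable. Substituting the resulting exponents then produces exactly the three inequalities in \eqref{GN2}, with the constants $C_{1},C_{2}$ inherited from \cref{Lem-GN2} and independent of $\Lambda$. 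Since the derivation is purely algebraic, there is no real obstacle; the main care needed is bookkeeping of the prefactor $\Lambda^{\frac{n}{p}-\frac{n}{q}-j}$ to ensure the stated negative powers of $\Lambda$ are correct.
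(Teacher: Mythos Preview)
Your proposal is correct and is exactly the intended argument: the paper states this corollary as a list of ``special cases for the sake of convenience'' without proof, and the verification amounts precisely to the exponent bookkeeping you carry out, together with the check that the endpoint restriction in \cref{Lem-GN2} does not apply. Your computations of $\theta$ and of the prefactor $\Lambda^{\frac{n}{p}-\frac{n}{q}-j}$ in each of the three cases are accurate.
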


\section{Vanishing Viscosity Limit}\label{sec3} 

\subsection{The perturbation system}

We first introduce the scaled variables. Setting 
\begin{align}\label{eq11} 
y=\frac{x}{\varepsilon}, \ \ \ \tau=\frac{t}{\varepsilon},\ \ 
{y\in\Omega:=\R\times\bar\Torus^2_\e,\ \ \tau\in\R^+,}
\end{align}
where $\bar\Torus_\e^2:=[0,\e^{b-1}]^2$ and $ y
=\left( {y_1}
, y'\right)=\left(y_{1}, y_{2}, y_{3}\right) \in \mathbb{R}^{3}$. In order to prove Theorem \ref{mt}, we study the perturbation of {the global smooth solution} $\left(\rho^\e, \uf^\e, \theta^\e\right)$ to the problem \eqref{NS} around {the ansatz
} $(\tilde{\rho}, \tilde{\uf}, \tilde{\theta})\left(t, x\right)$ by
\begin{align}\label{eqii}
	\phi{(\tau,y)}:=\rho^\e(t, x)-{\rhot}\left(t, x\right),\quad
	\psi{(\tau,y)}:=\uf^\e(t, x)-{\ut}(t, x),\quad
	\xi{(\tau,y)}:=\theta^\e(t, x)-\tilde{\theta}\left(t, x\right),
\end{align}
{with a family of smooth initial data
\begin{align}
(\rho^\e, u^\e,\T^\e)(0,x)=(\rhot,\ut,\Tt)(0,x)+(\phi,\psi,\zeta)(0,y),
\end{align}
where 
\begin{align}\label{init0}
\|(\phi_0,\psi_0,\zeta_0)(y)\|^2_{H^{2}
(\Omega)}=O(1)\e^{\frac{1}{3}+\gamma a}.
\end{align}}
For simplicity, we drop the superscript $\e$ of $\left(\rho^\e, \uf^\e, \theta^\e\right)$ as $(\rho,\uf,\theta)$ from now on.
Substituting \eqref{eq11} and \eqref{eqii} into the system \eqref{NS} and using the equations \eqref{ae} for $(\rhot,\ut,\thetat)$, one can obtain 
\begin{align}\label{sys-perturb}
	\begin{cases}
		\phi_{\tau}+u \cdot \nabla \phi+\rho \operatorname{div} \psi+\psi \cdot \nabla \tilde{\rho}+\phi \operatorname{div} \tilde{u}=-e_0, \\
		\rho\psi_{\tau}+\rho u \cdot \nabla \psi+\nabla(p-\tilde{p})+
		\rho\psi \cdot \nabla \tilde{u}-\frac{\phi}{\tilde{\rho}}\nabla\tilde{p}=\dv\left(2\mu(\Tt)\mathbb{D}(\psi)+\lambda(\Tt)\dv\psi \mathbb{I}\right)+\tilde{R},\\
		\rho\zeta_{\tau}+\rho u \cdot \nabla \zeta+p \operatorname{div} \psi+\rho\psi \cdot \nabla \tilde{\theta}+R\rho \zeta \operatorname{div} \tilde{u} =\dv(\kappa(\thetat)\nabla \zeta)+\tilde{R}_4,
	\end{cases}
\end{align}
with the initial data
\begin{align}\label{eqin}
(\phi,\psi,\zeta)(0,y)=(\phi_0,\psi_0,\zeta_0)(y),
\end{align}
where 
\begin{align}\label{R}
\begin{aligned}
	\tilde{R}=\frac{\rho}{\rhot}(e_0\ut-
	\ev)+\dv\big[2(\mu(\T)-\mu(\Tt))\mathbb{D}u+(\lambda(\T)-\lambda(\Tt))\dv  u\mathbb{I}\big]-\frac{\phi}{\rhot}
	\dv\big(2\mu(\Tt)\mathbb{D}\ut+\lambda(\Tt)\dv\ut\mathbb{I}\big),
	\end{aligned}
	\end{align}
\begin{align} \label{R_4}
\begin{aligned}
	\tilde{R}_4&=\frac{\rho}{\rhot}\left[\ev\ut-e_0\left(\frac{|\tilde{u}|^2}{2}-\tilde\theta\right)
	-e_4\right]
	+\dv\left((\kappa(\theta)-\kappa(\thetat))\nabla\T\right)-\frac{\phi}{\rhot}\left[\mu(\thetat)\frac{\left|\nabla \tilde{u}+(\nabla \tilde{u})^{t}\right|^{2}}{2}+\lambda(\thetat)(\operatorname{div} \tilde{u})^{2}\right] \\
	&+\mu(\theta)\frac{\left|\nabla u+(\nabla u)^{t}\right|^{2}}{2}+\lambda(\theta)(\operatorname{div} u)^{2}-\mu(\thetat)\frac{\left|\nabla \tilde{u}+(\nabla \tilde{u})^{t}\right|^{2}}{2}-\lambda(\thetat)(\operatorname{div} \tilde{u})^{2}-\frac{\phi}{\rhot}\dv(\kappa(\Tt)\nabla\Tt),
\end{aligned}
\end{align}
and $\tilde p=R\rhot\Tt$. 

\subsection{ A priori estimates}

For any  $0<\tau_1(\e)\le+\infty,$ define the solution space for \eqref{sys-perturb}-\eqref{R_4} by 
\begin{align}
\begin{aligned}
\mathcal{A}(0,\tau_1(\e))=\big\{&(\phi,\psi,\zeta)\big|(\phi,\psi,\zeta)\in C\big(0,\tau_1(\e); H^2(\Omega)\big),\nabla\phi\in L^2\big(0,\tau_1(\e); H^1(\Omega)\big),\\
&(\nabla\psi,\nabla\zeta)\in L^2\big(0,\tau_1(\e);H^2(\Omega)\big)\big\}.
\end{aligned}
\end{align}

The analysis is performed under the a priori assumptions
\begin{align}\label{eq09_1}
\begin{aligned}
\sup_{\tau\in[0,\tau_1(\varepsilon)]}\|(\phi,\psi)\|_{L^\infty(\Omega)}&\leq C\sup_{\tau\in[0,\tau_1(\varepsilon)]}\Big\{ \e^{-(b-1)}\|\nabla (\phi,\psi)\|^{\frac{1}{2}}\| (\phi,\psi)\|^{\frac{1}{2}}+\e^{-\frac{b-1}{2}}
		\|\nabla (\phi,\psi)\|\\
		&\qquad+\|\nabla^2 (\phi,\psi)\|^{\frac{3}{4}}\| (\phi,\psi)\|^{\frac{1}{4}}\Big\}
		\le \e^a,
		\end{aligned}
		\end{align}
	\begin{align}\label{eq09_2}
	\begin{aligned}
	\sup_{\tau\in[0,\tau_1(\varepsilon)]}\|\zeta\|_{L^\infty(\Omega)}&\leq C\sup_{\tau\in[0,\tau_1(\varepsilon)]}\Big\{ \e^{-(b-1)}\|\nabla \zeta\|^{\frac{1}{2}}\| \zeta\|^{\frac{1}{2}}+\e^{-\frac{b-1}{2}}
		\|\nabla \zeta\|+\|\nabla^2 \zeta\|^{\frac{3}{4}}\|\zeta\|^{\frac{1}{4}}\Big\}\\
		&
		\le \e^{a(\gamma-1)},
		\end{aligned}
		\end{align}
where $a$ is defined by \eqref{eqda}, 
  $[0,\tau_1(\varepsilon)]$ is the time interval in which the solution exists. Set 
\begin{align}\label{eq091}
\nu=\varepsilon^{ Za}|\log\varepsilon|,\ \ \delta=\varepsilon^a, \ \ Z=
\frac{1}{4
(\alpha+1)\gamma},
\end{align}
with $\varepsilon\ll1$.
From \eqref{eq094} and \eqref{eq09_1}-\eqref{eq09_2}, and by the definition of the rarefaction wave profile $\bar\theta=\bar\rho^{\gamma-1}\ge\nu^{\gamma-1}e^{\bar S}$, it holds
\begin{align}\label{eq092}
\frac{\rhot}{2}\le\rho\le\frac{3\rhot}{2}
, \ \ \ 
\frac{\thetat}{2}\le\theta\le\frac{3\thetat}{2}.
\end{align}
 
Since the density has lower bound which mainly depends on $\varepsilon$ if $\eta_0$ is small enough in the initial assumptions \eqref{eq095}, it is standard to proof the local existence of the solution to \eqref{sys-perturb}-\eqref{R_4} in the time interval $[0,\tau_0(\varepsilon)]$, we omit this proof process for brevity. Then in order to complete the proof of Theorem \ref{mt}
, it is requisite to extend the local solution to the global solution in $[0,\infty)$ for some small but fixed viscosity 
and heat conductivity coefficients $0<\varepsilon\ll1$. Thus it suffices to obtain the following uniform a priori estimates.
\begin{Lem}\label{ape1}
{\bf (A 
 priori estimate).} Let $(\phi,\psi,\zeta)\in\mathcal A(0,\tau_1(\e))$ be a solution to the problem \eqref{sys-perturb}-\eqref{eqin} under the a priori assumptions \eqref{eq09_1}-\eqref{eq09_2}
, where $\tau_1(\e)$ is the maximum existence time of the solution. Then there exists a positive constant $\e_1$ such that   if $0<\e\le\e_1$, then 
\begin{align}\label{eqe01}
\begin{aligned}
\sup _{\tau \in[0,\tau_1(\e)]} &\int_{\Omega}\left({{\rhob}}^{\gamma-2} \phi^{2}+{{\rhob}} \psi^{2}+{{\rhob}}^{2-\gamma} \zeta^{2}\right)(\tau, y) d y \\
		&\quad+\int_{0}^{\tau_1(\e)} \int_{\Omega}\left[\px\bar{u}\left({{\rhob}}^{\gamma-2} \phi^{2}+{{\rhob}} \psi^{2}+{{\rhob}}^{2-\gamma} \zeta^{2}\right)+{{\thetab}}^{\alpha} |\nabla\psi|^{2}+{\thetab}^{\alpha-1} |\nabla\zeta|^{2}\right] d yd\tau 
  \leq \e^{\frac{1}{3}},
\end{aligned}
\end{align}
\begin{align}\label{eqe02}
	\begin{aligned}
	\sup_{\tau\in[0,\tau_1(\e)]}\int_{\Omega}
	\frac{\Tb^{2\alpha}}{
	\bar\rho^3}|\nabla\phi|^2
	dy
	+\int_0^{\tau_1(\e)}\int_{\Omega}\frac{{\Tb}^{\alpha+1}}{\bar\rho^2}|\nabla\phi|^2dyd\tau
 \leq \e^{\frac{1}{3}-Z(\gamma-1)a}|\log\e|^{-(\gamma-1)},
	\end{aligned}
	\end{align}
\begin{align}\label{eqe03}
\begin{aligned}
\sup_{\tau\in[0,\tau_1(\e)]}\int_\Omega&\left[\frac{\bar\T}{\bar\rho}|\nabla\phi|^2+
\bar\rho|\nabla\psi|^2
+\frac{\bar\rho}{\bar\T}|\nabla\zeta|^2\right]dy\\
&+\int_0^{\tau_1(\e)}\int_{\Omega}\left[\bar\theta^\alpha
|\Delta\psi|^2+\bar\theta^\alpha
|\nabla\dv\psi|^2+\bar\theta^{\alpha-1}
|\Delta\zeta|^2\right]dyd\tau\le\e^{\frac{1}{3}-Z(\gamma-1)a}|\log\e|^{-(\gamma-1)},
\end{aligned}
\end{align}
\begin{align}\label{eqe04}
\begin{aligned}
&\sup_{\tau\in[0,\tau_1(\e)]}\int_\Omega\left[\frac{\Tb}{\rhob}|\nabla^2\phi|^2+
\rhob|\nabla^2\psi|^2+\frac{\rhob}{\Tb}|\nabla^2\zeta|^2\right]dy\\
&\qquad+\int_{0}^{\tau_1(\e)} \int_{\Omega}\left[\Tb^\alpha|\nabla\Delta\psi|^2+\Tb^\alpha|\nabla^2\dv\psi|^2+\Tb^{\alpha-1}|\nabla\Delta\zeta|^2
\right]dyd\tau
\le\e^{\frac{1}{3}-2Z(\gamma-1)a}|\log\e|^{-2(\gamma-1)},
\end{aligned}
\end{align}
\begin{align}\label{eqje0223}
	\begin{aligned}
	&\sup_{\tau\in[0,t]}\int_{\Omega}
	\frac{\Tb^{2\alpha}}{
	\bar\rho^3}|\nabla^2\phi|^2
	dy
	+
	\int_0^t\int_{\Omega}\frac{{\Tb}^{\alpha+1}}{\bar\rho^2}|\nabla^2\phi|^2dyd\tau\le\e^{\frac{1}{3}-2Z(\gamma-1)a}|\log\e|^{-2(\gamma-1)}.
\end{aligned}
\end{align}
\end{Lem}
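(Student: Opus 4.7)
The plan is a hierarchical weighted energy method, one step per estimate, each using the previous ones and closed under the bootstrap \eqref{eq09_1}--\eqref{eq09_2}. For \eqref{eqe01} I would introduce the relative-entropy-type functional $\int_\Omega(\bar\rho^{\gamma-2}\phi^2 + \bar\rho|\psi|^2 + \bar\rho^{2-\gamma}\zeta^2)\,dy$, modelled on the convex entropy of Euler but weighted so as to remain finite under the vacuum degeneracy, and differentiate along \eqref{sys-perturb}. The rarefaction monotonicity $\px\bar u>0$ from Lemma~\ref{sr}(i) yields the good term $\px\bar u\,(\bar\rho^{\gamma-2}\phi^2 + \bar\rho|\psi|^2 + \bar\rho^{2-\gamma}\zeta^2)$, while the viscous terms produce the dissipation $\bar\theta^\alpha|\nabla\psi|^2 + \bar\theta^{\alpha-1}|\nabla\zeta|^2$. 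The errors from $e_0,\ev,e_4,\tilde R,\tilde R_4$ split into a part decaying exponentially in $\tau$ (controlled by $\eta_0=O(\varepsilon^{20})$ via Lemma~\ref{er}) and a part from the smoothing/cutoff of the rarefaction wave (controlled by Lemma~\ref{sr}(ii)--(iii) together with the choices $\nu=\varepsilon^{Za}|\log\varepsilon|$, $\delta=\varepsilon^a$); cubic nonlinearities are absorbed using the $L^\infty$ bounds \eqref{eq09_1}--\eqref{eq09_2} and the viscous dissipation. Integrating in $\tau$ yields the $\varepsilon^{1/3}$ bound.

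For \eqref{eqe02} I would apply an effective-velocity technique: test the momentum equation against $\bar\theta^{2\alpha}\bar\rho^{-3}\nabla\phi$, use the expansion $\nabla(p-\tilde p) - (\phi/\tilde\rho)\nabla\tilde p\simeq\tilde\theta\bar\rho^{-1}\nabla\phi + (\text{lower order})$ to generate the coercive term $\bar\theta^{\alpha+1}|\nabla\phi|^2/\bar\rho^2$, and absorb the resulting $\partial_\tau\phi$ piece by commuting with the continuity equation and pairing with $\psi$-terms already controlled by \eqref{eqe01}. Estimates \eqref{eqe03}--\eqref{eqe04} then follow from testing the $\psi$- and $\zeta$-equations against $-\Delta\psi$ and $-\Delta\zeta$ (and differentiating once more for \eqref{eqe04}), producing the parabolic dissipation $\bar\theta^\alpha|\Delta\psi|^2 + \bar\theta^\alpha|\nabla\dv\psi|^2 + \bar\theta^{\alpha-1}|\Delta\zeta|^2$ and its second-order analogue; the convective nonlinearities $\rho u\cdot\nabla\psi$, $p\dv\psi$, etc., are handled by the Gagliardo--Nirenberg inequalities on the anisotropic domain $\Omega=\R\times\bar\Torus_\e^2$ provided by Lemma~\ref{Lem-GN} and its corollary, whose $\varepsilon^{-(b-1)}$ scaling is essential since the transverse torus has side length $\varepsilon^{b-1}$. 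Finally \eqref{eqje0223} is the effective-velocity trick of the second step applied one derivative higher, closed by \eqref{eqe04}.

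The principal obstacle is the joint bookkeeping of powers of $\varepsilon,\nu,\delta,\eta_0$: every estimate must yield a bound that is \emph{independent of $\tau$} so that the argument propagates all the way to $\tau=\infty$, while simultaneously the degeneracies $\bar\rho\ge\nu$ and $\bar\theta\ge\nu^{\gamma-1}e^{\bar S}$ near the vacuum interface must be absorbed into the viscous dissipation $\bar\theta^\alpha|\nabla\psi|^2$ and the pressure gradient term $\bar\theta^{\alpha+1}|\nabla\phi|^2/\bar\rho^2$. The logarithmic factor in $\nu=\varepsilon^{Za}|\log\varepsilon|$ is calibrated precisely to cancel the $\log(1+\tau)+|\log\delta|$ growth appearing in Lemma~\ref{sr}(iii), and the exponents in \eqref{eqda} are forced by the requirement that, at every level of the hierarchy, the right-hand side remain strictly smaller than the squared a priori ceiling $\varepsilon^{2a}$ (resp.\ $\varepsilon^{2(\gamma-1)a}$), so that the bootstrap closes for $\varepsilon$ sufficiently small.
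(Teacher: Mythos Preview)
Your plan is essentially the paper's own proof: Step~1 is exactly the relative-entropy functional $\hat{\mathbb E}=R\tilde\theta\hat\Phi(\tilde\rho/\rho)+\tfrac12|\psi|^2+\tilde\theta\hat\Phi(\theta/\tilde\theta)$ with $\hat\Phi(s)=s-\ln s-1$; Step~2 is the effective-velocity pairing (the paper multiplies \eqref{sys-perturb}$_2$ by $\Lambda(\tilde\theta)\rho^{-2}\nabla\phi$ and $\nabla$\eqref{sys-perturb}$_1$ by $\Lambda^2(\tilde\theta)\rho^{-3}\nabla\phi$, $\Lambda=2\mu+\lambda$, which is your weight up to replacing $\bar\rho,\bar\theta$ by $\rho,\tilde\theta$); Step~4 is again this trick one derivative higher, exactly as you say.

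Two small discrepancies are worth flagging. First, for \eqref{eqe03} the paper does \emph{not} test only against $-\Delta\psi,-\Delta\zeta$; it differentiates the full system and multiplies \eqref{sys-perturb1}$_1$ by $\tfrac{R\tilde\theta}{\rho}\partial_i\phi$, \eqref{sys-perturb1}$_2$ by $\partial_i\psi$, \eqref{sys-perturb1}$_3$ by $\tfrac{1}{\theta}\partial_i\zeta$ simultaneously. The point of including the $\phi$-multiplier is to cancel the cross term $R\tilde\theta\nabla\phi\cdot\nabla\dv\psi$ coming from the pressure, and as a byproduct one gets the $\tfrac{\bar\theta}{\bar\rho}|\nabla\phi|^2$ piece appearing in \eqref{eqe03}. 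Your uncoupled approach (used by the paper itself in the Appendix, see \eqref{eq0124}--\eqref{eq0125}) also works, but you then have to control that cross term by Cauchy--Schwarz against the dissipation from \eqref{eqe02}, which costs an extra negative power of $\nu$; the coupled version is cleaner here. Second, your reading of the $|\log\e|$ in $\nu=\e^{Za}|\log\e|$ is slightly off: in the a priori estimates it does not cancel the $\log(1+t)+|\log\delta|$ from Lemma~\ref{sr}(iii) (that only enters in the final $L^\infty$ comparison \eqref{eqes001}--\eqref{eqes003}); rather, whenever a right-hand side picks up a factor $\nu^{-(\gamma-1)}$ (e.g.\ the passage from \eqref{basic-energy01} to \eqref{eqj0123}) the logarithm is what produces the $|\log\e|^{-(\gamma-1)}$ on the right of \eqref{eqe02}--\eqref{eqe03} and makes the closing inequalities \eqref{eqe0021}--\eqref{eqe0022} strict for small $\e$.
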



\begin{proof} The proof of the lemma consists of the following steps.\\
{\bf{Step1.}} First, we aim to obtain the basic energy estimate. Defining
\begin{align*}
	\hat{\mathbb{E}}:=R{{\thetat}}\hat\Phi(\frac{{\tilde{\rho}}}{\rho})+\frac{|\psi|^2}{2}+ {\thetat}\hat\Phi(\frac{\theta}{{\thetat}}),
\end{align*}
where 
\begin{align}
	\hat\Phi(s):=s-\ln s-1,
\end{align}
then we have the following equation by direct computations
\begin{align}
	\p_\tau(\rho\hat{\mathbb{E}})+\dv \hat Q+\frac{\mu({\Tt})}{2}|\nabla\psi+(\nabla\psi)^t|^2+\lambda({\Tt})|\dv\psi|^2+\frac{\kappa({\Tt}){\Tt}}{\T^2}|\nabla\zeta|^2+\hat S=\hat H,
\end{align}
where
\begin{align}
\begin{aligned}
&\hat Q=\rho u\hat {\mathbb{E}}+(p-\tilde p)\psi-2\mu(\tilde\theta)\mathbb{D}(\psi)\cdot\psi-\lambda(\tilde\theta)\dv\psi\mathbb{I}\cdot\psi-\kappa(\tilde\theta)\nabla\zeta\frac{\zeta}{\theta},\\
&\hat S=-[(\rho\tilde\theta)_\tau+\dv(\rho u\tilde\theta)][R\Phi(\frac{\tilde\rho}{\rho})+\Phi(\frac{\theta}{\tilde\theta})]+\frac{\rho\tilde\theta_\tau}{\theta\tilde\theta}\zeta^2+\frac{\rho u\cdot\nabla\tilde\theta}{\theta\tilde\theta}\zeta^2+\rho\psi\cdot\nabla\tilde u\cdot\psi+\rho\psi\cdot\nabla\tilde\theta\frac{\zeta}{\theta}\\
&\qquad\qquad+\frac{R\rho\zeta^2}{\theta}\dv \tilde u-R\phi\psi\cdot\nabla\tilde\theta:=\hat S_1+\hat S_2,\\
&\hat H=-\frac{R\tilde\theta\phi}{
\rho}e_0+\tilde R\cdot\psi+\tilde R_4\frac{\zeta}{\theta}+\kappa(\tilde\theta)\zeta\frac{\nabla\tilde\theta\cdot\nabla\zeta}{\theta^2}.
\end{aligned}
\end{align}
Integrating the above equation over $[0,\tau_1
]\times\Omega
$ with $\tau_1:=\tau_1(\e)$, we have 
\begin{align}\label{eq0128}
\begin{aligned}
	&\left.\int_\Omega
	 R{\rho{\thetat}}\hat\Phi(\frac{{\tilde{\rho}}}{\rho})+\rho\frac{|\psi|^2}{2}+\rho\thetat\hat\Phi(\frac{\theta}{{\thetat}})d{y}\right|_0^{\tau_1}
	 +\int_0^{\tau_1}
	 \int_\Omega\big({\thetat}^{\alpha}|\nabla \psi|^2+{\thetat}^{\alpha-1}|\nabla\zeta|^2\big)d{y}d\tau+\int_{0}^{\tau_1}
	 {\int_\Omega} \hat S{dy}d\tau\\
	 &\qquad\leq\int_{0}^{\tau_1}
	 {\int_\Omega}  |\hat H|{dy}d\tau.
	 \end{aligned}
\end{align}
Since $\Phi(1)=\Phi'(1)=0$, $\Phi''(1)>0$, one has
\begin{align}
	R{\rho{\thetat}}\Phi(\frac{{\tilde{\rho}}}{\rho})+\rho\frac{|\psi|^2}{2}+\rho\thetat\Phi(\frac{\theta}{{\thetat}})\geq C\big({\rhot}^{\gamma-2}\phi^2+{\rhot}|\psi|^2+{\rhot}^{2-\gamma}\zeta^2\big).
\end{align}
Then making use of the estimate in Lemma \ref{lem-periodic-solution}, we have
\begin{align}\label{pe}
\|(\rhot-\bar\rho,\ut-\bar u,\thetat-\bar\theta)\|_{W^{2,\infty}(\mathbb{R}^3)}\leq O(\nu^{-1}\eta_0) e^{-\varepsilon^{n_2}\tau}.
\end{align}
Thus, 
\begin{align}\label{eq0129}
\begin{aligned}
\int_0^{\tau_1}
\int_\Omega\hat S_1dyd\tau&=\int_0^{\tau_1}
\int_\Omega-[(\rho\bar{\theta})_\tau+\dv(\rho u\bar{\theta})][R\Phi(\frac{\tilde\rho}{\rho})+\Phi(\frac{\theta}{\tilde\theta})]+\frac{\rho\bar\theta_\tau}{\theta\tilde\theta}\zeta^2+\frac{\rho u\cdot\nabla\bar\theta}{\theta\tilde\theta}\zeta^2\\
&\qquad+\rho\psi\cdot\nabla\bar u\cdot\psi+\rho\psi\cdot\nabla\bar\theta\frac{\zeta}{\theta}+\frac{R\rho\zeta^2}{\theta}\dv \bar u-R\phi\psi\cdot\nabla\bar\theta dyd\tau\\
&\ge \frac{1}{160}\int_0^{\tau_1}
\int_\Omega\p_1\bar u_1(\tilde\rho^{\gamma-2}\phi^2+\tilde\rho\psi_1^2+\tilde\rho^{2-\gamma}\zeta^2)dyd\tau\\
&\quad-O(\eta_0)\int_{0}^{\tau_1}
e^{-\e^{n_2} \tau}d\tau\int_{\Omega} \p_1\bar u_1\big({\rhot}^{\gamma-2}\phi^2+{\rhot}|\psi|^2+{\rhot}^{2-\gamma}\zeta^2\big)dy,
\end{aligned}
\end{align}
\begin{align}\label{eq0130}
\begin{aligned}
\int_0^{\tau_1}
\int_\Omega\hat S_2dyd\tau&=\int_0^{\tau_1}
\int_\Omega -[(\rho(\tilde\theta-\bar\theta))_\tau+\dv(\rho u(\tilde\theta-\bar\theta))][R\Phi(\frac{\tilde\rho}{\rho})+\Phi(\frac{\theta}{\tilde\theta})]+\frac{\rho(\tilde\theta-\bar\theta)_\tau}{\theta\tilde\theta}\zeta^2\\
&\qquad +\frac{\rho u\cdot\nabla(\tilde\theta-\bar\theta)}{\theta\tilde\theta}\zeta^2+\rho\psi\cdot\nabla(\tilde u-\bar u)\cdot\psi+\rho\psi\cdot\nabla(\tilde\theta-\bar\theta)\frac{\zeta}{\theta}\\
&\qquad+\frac{R\rho\zeta^2}{\theta}\dv (\tilde u-\bar u)-R\phi\psi\cdot\nabla(\tilde\theta-\bar\theta) dyd\tau\\
	&\le O(\eta_0)\int_0^{\tau_1}
	 e^{-\varepsilon^{n_2}\tau}\int_\Omega(\tilde\rho^{\gamma-2}\phi^2+\tilde\rho|\psi|^2+\tilde\rho^{2-\gamma}\zeta^2)dyd\tau.
\end{aligned}
\end{align} 
Similarly, 
\begin{align}\label{eq131}
\begin{aligned}
&\int_{0}^{\tau_1}
{\int_\Omega}  |\hat H|{dy}d\tau
\leq \frac{1}{160}\int_0^{\tau_1}
\int_\Omega\big[\p_1\bar u_1(\tilde\rho^{\gamma-2}\phi^2+\tilde\rho\psi_1^2+\tilde\rho^{2-\gamma}\zeta^2)+{\thetat}^{\alpha}|\nabla \psi|^2+{\thetat}^{\alpha-1}|\nabla\zeta|^2\big]dyd\tau\\
&\quad+\frac{1}{160}\sup_{\tau\in[0,{\tau_1}
]}\int_\Omega(\tilde\rho^{\gamma-2}\phi^2+\tilde\rho|\psi|^2+\tilde\rho^{2-\gamma}\zeta^2)dy+\nu^{-\gamma+\frac{1}{2}}\int_0^{\tau_1}
\|\tilde{\rho}^{\frac{\gamma-2}{2}}\phi\|\|\bar{u}_{1y_1y_1}\|
\|(\psi,\zeta)\|_{L^{\infty}}d\tau\\
&\quad+\int_0^{\tau_1}
\int_{\Omega}(\bar{\T}^{\alpha}|\p_1^2\bar{u}_1|+\bar{\T}^{\alpha-\frac{1}{2}}|\p_1\bar{u}_1|^2)(\psi,\frac{\zeta}{\T})dyd\tau+\int_0^{\tau_1}
\nu^{-\gamma}\eta_0 e^{-\varepsilon^{n_2}\tau}\|(\phi,\psi,\zeta)\|d\tau,
\end{aligned}
\end{align}
where 
\begin{align}
\begin{aligned}
&\int_0^{\tau_1}
\int_{\Omega}(\bar{\T}^{\alpha}|\p_1^2\bar{u}_1|+\bar{\T}^{\alpha-\frac{1}{2}}|\p_1\bar{u}_1|^2)(\psi,\frac{\zeta}{\T})dyd\tau\\
&\leq \int_0^{\tau_1}
\|\thetat^{\alpha-1}\|_{\infty}\bigg(\|(\psi,\zeta)\|_{\infty}\|\p_1^2\bar{u}_1\|_{L^1}+\nu^{-\frac{1}{2}}\|\p_1\bar{u}_1\|_{L^4}^2\|({\rhot^{\frac{1}{2}}}\psi,{\rhot}^{\frac{2-\gamma}{2}}\zeta)\|\bigg)d\tau\\
&\le \frac{1}{160}\bigg(\sup_{\tau\in[0,{\tau_1}
]}\|({\rhot^{\frac{1}{2}}}\psi,{\rhot}^{\frac{2-\gamma}{2}}\zeta)\|^2+\int_0^{\tau_1}
\|({\thetat}^{\frac{\alpha}{2}}\nabla \psi,{\thetat}^{\frac{\alpha-1}{2}}\nabla\zeta)\|^2d\tau\bigg)\\
&\qquad\qquad+\e^{\beta}\int_0^{\tau_1}
\|({\thetat}^{\frac{\alpha}{2}}\nabla^2\psi,{\thetat}^{\frac{\alpha-1}{2}}\nabla^2\zeta)\|^2d\tau+\e^{\frac{1}{3}},\\
&\nu^{-\gamma+\frac{1}{2}}\int_0^{\tau_1}
\|\tilde{\rho}^{\frac{\gamma-2}{2}}\phi\|\|\bar{u}_{1y_1y_1}\|_{L^2}\|(\psi,\zeta)\|_{L^{\infty}}d\tau
\leq \frac{1}{160}\int_0^t\|({\thetat}^{\frac{\alpha}{2}}\nabla \psi,{\thetat}^{\frac{\alpha-1}{2}}\nabla\zeta)\|^2d\tau\\
&\qquad+\e^{\frac{4}{3}(1-a)}\int_0^{\tau_1}
\|(\thetat^{\frac{\alpha}{2}}\nabla^2\psi,\thetat^{\frac{\alpha}{2}}\nabla^2\zeta)\|^2d\tau+\frac{1}{160}\sup _{\tau \in[0,\tau_1]}\|({\rhot}^{\frac{\gamma-2}{2}}\phi,{\rhot^{\frac{1}{2}}}\psi,{\rhot}^{\frac{2-\gamma}{2}}\zeta)\|^2,
\end{aligned}
\end{align}
where if $\e\ll1$, we use the fact that $$\max\big\{C\nu^{-\frac{1}{2}}\thetat^{-\frac{\alpha+4}{2}}\e^{\frac{1-a}{2}-2(b-1)}
, C\nu^{-\frac{1}{4}}\thetat^{-\frac{3\alpha+8}{4}}\e^{\frac{3}{4}(1-a)-\frac{5}{4}\beta}
\big\}\le\e^{\frac{1}{3}},$$
for $1\le b\leq \frac{13}{12}-\frac{a}{4}-\big\{\frac{4\gamma-3+(\gamma-1)\alpha}{4}\big\}Za,$
where small constant $\beta>0$ 
 will be determined later.

Combining all the estimates above and initial data \eqref{init0}, when $\eta_0$ is suitably small, we have
\begin{align}\label{basic-energy01}
	\begin{aligned}
		\sup _{\tau \in[0,\tau_1(\e)]} &\int_{\Omega}\left({{\rhot}}^{\gamma-2} \phi^{2}+{{\rhot}} \psi^{2}+{{\rhot}}^{2-\gamma} \zeta^{2}\right)(\tau, y) d y \\
		&\quad+\int_{0}^{\tau_1}
		 \int_{\Omega}\left[\px\bar{u}\left({{\rhot}}^{\gamma-2} \phi^{2}+{{\rhot}} \psi^{2}+{{\rhot}}^{2-\gamma} \zeta^{2}\right)+{{\thetat}}^{\alpha} |\nabla\psi|^{2}+{\thetat}^{\alpha-1} |\nabla\zeta|^{2}\right] d yd\tau\\
	&\quad \leq \e^{\frac{1}{3}}
 +\e^{\beta}\int_0^{\tau_1}
 \|(\thetat^{\frac{\alpha}{2}}\nabla^2\psi,\thetat^{\frac{\alpha}{2}}\nabla^2\zeta)\|^2d\tau.
\end{aligned}
\end{align}


{\bf Step 2.} Now, we need to estimate $\|\nabla\phi\|^2$. Multiplying \cref{sys-perturb}$_2$ by $\frac{2\mu({\Tt})+\lambda(\Tt)}{\rho^2}\nabla{\phi}$, one has
	\begin{align}\label{eq-phiy01}
		&\p_\tau\left[\frac{\Lambda({\Tt})}{\rho}\psi\cdot\nabla\phi\right]+\frac{R\Lambda({\Tt}){\Tt}}{\rho^2}|\nabla\phi|^2+\frac{\Lambda({\Tt})}{\rho} u\cdot\nabla \psi\cdot\nabla\phi+\frac{R\Lambda({\Tt})}{\rho^2}\nabla(\rho\zeta)\cdot\nabla\phi+
		\frac{\Lambda({\Tt})}{\rho} \psi \cdot \nabla {\tilde{u}}\cdot\nabla\phi\nonumber\\
		&+\frac{\phi R\Lambda({\Tt})}{\rho^2}\nabla\Tt\cdot\nabla\phi-\frac{\phi\Lambda({\Tt})}{{\rhot}\rho^2}\nabla{\tilde{p}}\cdot\nabla\phi=\frac{{\Lambda^2({\Tt})}}{\rho^2}\nabla\phi\cdot\nabla\dv\psi+\frac{\Lambda(\Tt)}{\rho^2}\tilde{R}\cdot\nabla\phi+\tilde{S},
	\end{align}
where $\Lambda(\Tt):=2\mu(\Tt)+\lambda(\Tt)$ and 
\begin{align*}
	\begin{aligned}
	\tilde{S}&:=\psi\cdot\p_\tau\left[\frac{\Lambda({\Tt})}{\rho}\nabla\phi\right]+\frac{\lambda'({\Tt})\Lambda(\Tt)}{\rho^2}\dv\psi\nabla\Tt\cdot\mathbb{I}\cdot\nabla\phi+\frac{2\Lambda(\Tt)}{\rho^2}\mu'(\Tt)\nabla\Tt\cdot\mathbb{D}\psi\cdot\nabla\phi\\
	&-
	\nabla\left(\frac{{\mu({\Tt})}\Lambda(\Tt)}{\rho^2}\right)\cdot\left(\nabla\psi\cdot\nabla\phi-\nabla\phi\cdot\nabla\psi\right)+\dv\left[\frac{\mu({\Tt})\Lambda(\Tt)}{\rho^2}\cdot\left(\nabla\psi\cdot\nabla\phi-\nabla\phi\cdot\nabla\psi\right)\right].
\end{aligned}
\end{align*}
To deal with the crossing term, we apply $\nabla$ to \cref{sys-perturb}$_1$ and multiply it by $\frac{\Lambda^2(\Tt)}{\rho^3}\nabla\phi$, 
\begin{align}\label{eq-phiy02}
	\begin{aligned}
	&\p_\tau\left[\frac{\Lambda^2(\Tt)}{2\rho^3}\left|\nabla\phi\right|^2\right]-\left[\p_\tau\left(\frac{\Lambda^2(\Tt)}{\rho^3}\right)+\dv\left(\frac{\Lambda^2(\Tt)u}{\rho^3}\right)\right]\frac{|\nabla\phi|^2}{2}+\frac{\Lambda^2({\Tt})}{\rho^2}\nabla\phi\cdot\nabla\dv\psi\\
	&+\frac{\Lambda^2({\Tt})}{\rho^3}\nabla\phi\cdot\nabla\rho\dv\psi+\frac{\Lambda^2({\Tt})\nabla\phi\cdot\nabla u\cdot\nabla\phi}{\rho^3}+\frac{\Lambda^2({\Tt})}{\rho^3}\nabla\phi\cdot\nabla(\psi\cdot\nabla{\rhot}+\phi\dv {\tilde{u}}+e_0)\\
	&\quad=\dv(-\frac{\Lambda^2(\Tt)|\nabla\phi|^2 u}{2\rho^3}).
\end{aligned}
\end{align}
Combining \cref{eq-phiy01,eq-phiy02}, integrating the above equation over $[0,\tau_1
]\times\Omega
$, we have
\begin{align}
	\begin{aligned}
	&\int_{\Omega}\frac{\Lambda^2(\Tt)}{2\rho^3}|\nabla\phi|^2+\left.\frac{\Lambda(\Tt)\psi\cdot\nabla\phi}{\rho}dy\right|_0^{\tau_1}+\int_0^{\tau_1}
	\int_{\Omega}\frac{R\Lambda({\Tt}){\Tt}}{\rho^2}|\nabla\phi|^2dyd\tau\leq&C\int_0^{\tau_1}
	\int_{\Omega}|\tilde{H}|dyd\tau,
\end{aligned}
\end{align}
where
\begin{align*}
	&\int_0^{\tau_1}
	\int_{\Omega}|\tilde{H}|dyd\tau\leq C\int_0^{\tau_1}
	\int_\Omega\left|\frac{\Lambda({\Tt})}{\rho} u+\frac{\lambda'({\Tt})\Lambda(\Tt)}{\rho^2}\nabla{\Tt}+\mu({\Tt})\nabla\left(\frac{\Lambda(\Tt)}{\rho^2}\right)\right||\nabla\psi||\nabla\phi|\\
	&+\frac{\Lambda^2({\Tt})}{\rho^3}\left(|\nabla\rho|+|\nabla {\tilde{u}}|+\frac{\rho}{\Lambda(\Tt)}\mu'(\Tt)|\nabla{\Tt}|\right)|\nabla\phi||\nabla\psi|+\frac{\Lambda^2({\Tt})}{\rho^3}|\nabla\phi\cdot\nabla e_0|+\frac{\Lambda(\Tt)}{\rho^2}|\tilde{R}\cdot\nabla\phi|\nonumber\\
	&+\frac{\Lambda({\Tt})}{\rho^2}|\nabla(\rho\zeta)\cdot\nabla\phi|+\left|\phi \nabla{\Tt}+\frac{\phi}{{\rhot}}\nabla{\tilde{p}}+
	\rho \psi\cdot \nabla {\tilde{u}}\right|\frac{ \Lambda({\Tt})}{\rho^2}|\nabla\phi|+\left|\psi\cdot\p_\tau\left[\frac{\Lambda({\Tt})}{\rho}\nabla\phi\right]\right|\\
	&+\left|\p_\tau\left(\frac{\Lambda^2(\Tt)}{\rho^3}\right)+\dv\left(\frac{\Lambda^2(\Tt)u}{\rho^3}\right)\right|\frac{|\nabla\phi|^2}{2}+\frac{\Lambda^2({\Tt})}{\rho^3}|\nabla\phi|\big|(\psi\cdot\nabla)\nabla{\rhot}+\phi\nabla\dv{\tilde{u}}\big|dyd\tau\\
	&:=\sum_{i=1}^{9}J_i.
\end{align*}
Here for $b\le1+a[1-(\gamma(2\alpha+3)
+3) Z],$ we estimate $J_i,i=1,2,\cdots 9:$
\begin{align}\label{eqj1j2}
\begin{aligned}
	&J_1+J_2\leq C\nu^{-(\gamma-1)}\int_0^{\tau_1}
	\|(\thetat^{\frac{\alpha}{2}}\nabla\psi,\thetat^{\frac{\alpha}{2}}\nabla\zeta)\|^2d\tau+\frac{1}{160}\int_0^{\tau_1}
	\int_\Omega\frac{\Tb^{\alpha+1}}{\rhob^2}|\nabla\phi|^2dyd\tau\\
&\qquad\qquad+\e^{a+1-b}\nu^{-3-(\alpha+1)(\gamma-1)}\int_0^{\tau_1}
\|\sqrt{\frac{{\thetat}^{\alpha+1}}{\rhob^2}}\nabla\phi\|_{H^1}^2d\tau,\\
&J_3+J_4\le \e^{\frac{1}{3}}+\nu^{-(\gamma+1)}\e^{1-2a}+\frac{1}{160}\int_0^{\tau_1}
\int_{\Omega}\frac{\Tb^{\alpha+1}}{\rhob^2}|\nabla\phi|^2+{\Tb^{\alpha}}|\nabla\psi|^2+{\Tb^{\alpha-1}}|\nabla\zeta|^2\\
&\qquad\qquad+\bar\theta^\alpha|\nabla^2\psi|^2dyd\tau+O(\nu^{-(\gamma+2)}\e^{1-2a})\sup_{\tau\in[0,\tau_1
]}\int_\Omega(\bar\rho^{\gamma-2}\phi^2+{\bar{\rho}^{2-\gamma}} \zeta^{2})(\tau)dy\\
&\qquad\qquad+\e^{a+1-b}\nu^{-((1+\alpha)(\gamma-1)+2)}\int_0^{\tau_1}
\|({\thetat}^{\frac{\alpha-1}{2}}\nabla\zeta,{\thetat}^{\frac{\alpha}{2}}\nabla\psi)\|_{H^1}^2d\tau,\\
&J_5+J_6+J_9
\le\e^{\frac{1}{3}}+ \frac{1}{160}\sup_{\tau\in[0,\tau_1
]}\int_\Omega(\bar\rho^{\gamma-2}\phi^2+{\bar{\rho}} \psi^{2}+{\bar{\rho}^{2-\gamma}} \zeta^{2})dy\\
&\qquad+\frac{1}{160}\int_{0}^{\tau_1}
\int_\Omega\frac{\Tb^{\alpha+1}}{\rhob^2}|\nabla\phi|^2dyd\tau+O(\nu^{-\gamma}\e^{1-a})\sup_{\tau\in[0,\tau_1
]}\int_\Omega(\bar\rho^{\gamma-2}\phi^2+\rhob\psi^2+{\bar{\rho}^{2-\gamma}} \zeta^{2})(\tau)dy,\\
&J_7
\leq C\int_0^{\tau_1}\int_\Omega\left\{ \left[\left|\p_\tau\left(\frac{\Lambda({\Tt})}{\rho}\right)\right|+\left|\nabla\left(\frac{\Lambda({\Tt})}{\rho}\right){u}\right|\right]|\psi||\nabla\phi|+\left|\frac{\Lambda({\Tt})}{\rho}\dv{\ut}\right||\nabla\psi||\phi|\right.\\
&\qquad\left.+\left[\left|\nabla\left(\frac{\Lambda({\Tt})}{\rho}\right){\rho}\right|+\left|\frac{\Lambda({\Tt})}{\rho}\nabla\rhot\right|\right]|\psi||\nabla\psi|\right\}+\Big\{
\left|\frac{\Lambda({\Tt})}{\rho}u\right||\nabla\psi||\nabla\phi|+\left|{\Lambda({\Tt})}\right||\nabla\psi|^2
\\
&\qquad+
\left|\nabla\left(\frac{\Lambda({\Tt})}{\rho}\right)\right|\left(\dv u|\psi||\phi|+\nabla{\rhot}|\psi|^2\right)\Big\}
+\left\{\left|\nabla\left(\frac{\Lambda({\Tt})}{\rho}\right)
\right||\psi|e_0+\left|\frac{\Lambda({\Tt})}{\rho}\right||\nabla\psi|e_0\right\} dyd\tau\\
&\ \le \e^{\frac{1}{3}}+\nu^{-(\gamma-1)}\int_0^{\tau_1}
\|(\thetat^{\frac{\alpha}{2}}\nabla\psi,\thetat^{\frac{\alpha-1}{2}}\nabla\zeta)\|^2d\tau+\frac{1}{160}\int_{0}^{\tau_1}
\int_\Omega\frac{\Tb^{\alpha+1}}{\rhob^2}|\nabla\phi|^2dyd\tau\\
&\qquad+O(\eta_0+\e^{1-a}\nu^{-\gamma})\sup_{\tau\in[0,\tau_1
]}\int_\Omega(\bar\rho^{\gamma-2}\phi^2+\rhob\psi^2+{\bar{\rho}^{2-\gamma}} \zeta^{2})(\tau)dy.
\end{aligned}
\end{align}
Similarly, there exists
\begin{align}\label{eqj0122}
\begin{aligned}
J_8\le \e^{a+1-b}\nu^{-3-(\alpha+1)(\gamma-1)}\int_0^{\tau_1}
\|\sqrt{\frac{{\thetat}^{\alpha+1}}{\rhob^2}}\nabla\phi\|_{H^1}^2d\tau+
 \frac{1}{160}
\int_0^{\tau_1}
\int_{\Omega}\frac{\Tb^{\alpha+1}}{\rhob^2}|\nabla\phi|^2
dyd\tau.
\end{aligned}
\end{align} 
Therefore, by \eqref{basic-energy01}, we choose $\beta=2(\gamma-1)Za$. It yields
\begin{align}\label{eqj0123}
	\begin{aligned}
	\sup_{\tau\in[0,\tau_1
	]}\int_{\Omega}&\left(\frac{\Tb^{2\alpha}}{
	\bar\rho^3}|\nabla\phi|^2+
	\frac{\Tb^\alpha\psi\nabla\phi}{\bar\rho}\right)(\tau)dy
	+
	\int_0^{\tau_1}
	\int_{\Omega}\frac{{\Tb}^{\alpha+1}}{\bar\rho^2}|\nabla\phi|^2dyd\tau\\
	&\leq \e^{\frac{1}{3}}\nu^{-(\gamma-1)}+
	\int_{\Omega}
	\bar\theta^\alpha|\nabla^2\psi|^2+\Tb^{\alpha-1}|\nabla^2\zeta|^2d\tau+\e^{(\gamma-1)Za}
	\int_0^{\tau_1}
	\|\sqrt{\frac{{\thetat}^{\alpha+1}}{\rhob^2}}\nabla^2\phi\|^2d\tau.	
\end{aligned}
\end{align}


{\bf Step 3. }
Now we estimate $\nabla\psi$ and $\nabla\zeta$. Applying $\p_i:=\p_{y_i}$, $i=1,2,3$, to \cref{sys-perturb}, we have
\begin{align}\label{sys-perturb1}
	\begin{cases}
		\p_i\phi_{\tau}+u \cdot \nabla\p_i\phi+\rho \operatorname{div}\p_i \psi=\hat{Q}_0, \\
		\rho\p_i\psi_{\tau}+\rho u \cdot \nabla\p_i \psi+\nabla\p_i(p-\tilde{p})=\dv\p_i\left(2\mu(\Tt)\mathbb{D}(\psi)+\lambda(\Tt)\dv\psi \mathbb{I}\right)+\hat{Q}_1,\\
		\rho\p_i\zeta_{\tau}+\rho u \cdot \nabla\p_i \zeta+p \operatorname{div}\p_i \psi =\dv\p_i(\kappa(\thetat)\nabla \zeta)+\hat{Q}_2,
	\end{cases}
	\end{align} 
where
\begin{align*}
	\hat{Q}_0:=&-\p_i e_0-\p_i u\cdot\nabla\phi-\p_i\rho\dv\psi-\pii\left(\psi\cdot\nabla\rhot+\phi\dv\ut\right),\\
	\hat{Q}_1:=&\pii\tilde{R}-\pii\rho\psi_{\tau}-\pii(\rho u)\cdot\nabla\psi-\pii
	\left(\rho\psi \cdot \nabla \tilde{u}-\frac{\phi}{\tilde{\rho}}\nabla\tilde{p}\right),\\
	\hat{Q}_2:=&\pii\tilde{R}_4-\pii\rho\zeta_{\tau}-\pii(\rho u)\cdot\nabla\zeta-\pii p\dv\psi-\pii(\rho\psi\cdot\nabla\Tt+R\rho\dv\ut\zeta).
\end{align*}
Multiplying \cref{sys-perturb1}$_1$ by $\frac{R{\Tt}}{\rho}\pii\phi$, \cref{sys-perturb1}$_2$ by $\pii\psi$, \cref{sys-perturb1}$_3$ by $\frac{1}{\T}\pii\zeta$, and adding the results up, one has
		\begin{align*}
			&\p_\tau\left[\frac{R\Tt}{2\rho}|\nabla\phi|^2+\frac{\rho|\nabla\psi|^2}{2}+\frac{\rho}{2\T}|\nabla\zeta|^2\right]+\mu(\Tt)|\Delta\psi|^2+(\mu(\Tt)+\lambda(\Tt))|\nabla\dv\psi|^2+\frac{\kappa(\Tt)}{\T}|\Delta\zeta|^2\\
			&-\left[\p_\tau\left(\frac{R\Tt}{2\rho}\right)+\dv\left(\frac{R\Tt u}{2\rho}\right)\right]|\nabla\phi|^2-\left[\p_\tau\left(\frac{\rho}{\T}\right)+\dv\left(\frac{\rho u}{\T}\right)\right]\frac{|\nabla\zeta|^2}{2}+\nabla\kappa(\Tt)\cdot\nabla\zeta\dv(\frac{\nabla\zeta}{\T})\\
			&+R(\nabla{\Tt} {\phi}+ \nabla \rho{\zeta}) \cdot \nabla \dv{\psi}-\kappa({\Tt})\frac{\nabla\zeta\cdot\nabla\theta\Delta\zeta}{\T^2}+\left[2\nabla\mu(\Tt)\cdot\mathbb{D}\psi+\nabla\lambda(\Tt)\cdot\dv\psi\mathbb{I}\right]\cdot\nabla^2\psi\\
			&-\frac{R{\Tt}}{\rho}\nabla\phi\cdot\hat{Q}_0-\nabla\psi\cdot\hat{Q}_1-\frac{1}{\T}\nabla\zeta\cdot\hat{Q}_2+\dv(\cdots)=0.
		\end{align*}
	
	Integrating the above equation over $\Omega\times[0,\tau_1]$, one has
	\begin{align*}
		\begin{aligned}
			\mathcal{I}_1 +\int_0^{\tau_1}\mathcal{D}_1d\tau\leq \mathcal{H}_1+\mathcal{Q}_1,
		\end{aligned}
	\end{align*}
where

\begin{align*}
\mathcal{I}_1=\int_{\Omega}\frac{R\Tt}{2\rho}|\nabla\phi|^2+\frac{\rho|\nabla\psi|^2}{2}+\frac{\rho}{2\T}|\nabla\zeta|^2dy,\ \ \ \mathcal{D}_1=\int_{\Omega}{\thetat}^{\alpha}|\nabla^2\psi|^2+{\thetat}^{\alpha-1}|\nabla^2\zeta|^2dy,
\end{align*}
\begin{align*}
\mathcal{H}_1:=&\int_{0}^{\tau_1}\int_{\Omega}\left[\p_\tau\left(\frac{R\Tt}{2\rho}\right)+\dv\left(\frac{R\Tt u}{2\rho}\right)\right]|\nabla\phi|^2+\left[\p_\tau\left(\frac{\rho}{\T}\right)+\dv\left(\frac{\rho u}{\T}\right)\right]\frac{|\nabla\zeta|^2}{2}dyd\tau\\
&-\int_{0}^{\tau_1}\int_{\Omega}\left[\nabla\kappa(\Tt)\cdot\nabla\zeta\dv(\frac{\nabla\zeta}{\T})+R(\nabla{\Tt} {\phi}+ \nabla \rho{\zeta}) \cdot \nabla \dv{\psi}-\kappa({\Tt})\frac{\nabla\zeta\cdot\nabla\theta\Delta\zeta}{\T^2}\right]\\
&+\left[2\nabla\mu(\Tt)\cdot\mathbb{D}\psi+\nabla\lambda(\Tt)\cdot\dv\psi\mathbb{I}\right]\cdot\nabla^2\psi dyd\tau:=\sum_{i=1}^{3}\mathcal{H}_{1i},\\
\mathcal{Q}_1:=&\int_{0}^{\tau_1}\int_{\Omega}\frac{R{\Tt}}{\rho}\nabla\phi\cdot\hat{Q}_0+\nabla\psi\cdot\hat{Q}_1+\frac{1}{\T}\nabla\zeta\cdot\hat{Q}_2dyd\tau=:\mathcal{Q}_{11}+\mathcal{Q}_{12}+\mathcal{Q}_{13}.
\end{align*}
By the priori assumptions \eqref{eq09_1}-\eqref{eq09_2} and 
G-N inequality \eqref{eq46i}$_1$. Choosing $\eta_0$ suitably small, these terms can be estimated as follows

\begin{align}\label{H11}
\begin{aligned}
\mathcal{H}_{11}
&\leq 
\int_{0}^{\tau_1}\int_{\Omega}(\frac{\thetat_{\tau}+u\cdot\nabla\thetat}{\rho}+\frac{\Tt}{\rho}\dv u)|\nabla\phi|^2+\frac{1}{\T^2}\Big[(\gamma-1)\rho\T\dv u\\
&\qquad\qquad-\dv(\kappa(\T)\nabla\T)-u\cdot\dv \mathcal{T}\Big]|\nabla\zeta|^2dyd\tau\\
 &\le\e^{1-a}\nu^{-(\alpha+1)(\gamma-1)}\int_0^{\tau_1} \|\sqrt{\frac{\Tb^{\alpha+1}}{\rhob^2}}\nabla\phi\|^2d\tau
+\e^{1-a}\nu^{-\alpha(\gamma-1)}\int_0^{\tau_1}\|{\thetat}^{\frac{\alpha-1}{2}}\nabla\zeta\|^2d\tau\\
&\qquad\qquad+K_{11}+K_{12}+K_{13},
\end{aligned}
\end{align}
where 
\begin{align}
\begin{aligned}
K_{11}&:=\int_0^{\tau_1}\|{\bar\theta}^{-\frac{\alpha}{2}}\|_{\infty}\|\sqrt{\frac{\Tb^{\alpha+1}}{\rhob^2}}\nabla\phi\|\|\nabla\phi\|_{L^4}\|\nabla\psi\|_{L^4}d\tau\\
&\le \e^{a+1-b}{\nu}^{-(\frac{1}{2}+\alpha)(\gamma-1)}
\int_0^{\tau_1}\|\sqrt{\frac{{\thetat}^{\alpha+1}}{\rhob^2}}\nabla\phi\|_{H^1}^2d\tau,\\
K_{12}&:=\int_0^{\tau_1}\|{\thetat}^{-\frac{(1+\alpha)}{2}}\|_\infty\|{\thetat}^{\frac{\alpha-1}{2}}\nabla\zeta\|\|(\nabla\psi,\nabla\zeta)\|_{L^4}^2+\|{\thetat}^{-3}\|_\infty\|\nabla\zeta\|_{L^4}^4d\tau\\
&\leq \frac{1}{160}\int_0^{\tau_1}\|({\thetat}^{\frac{\alpha-1}{2}}\nabla^2\zeta,{\thetat}^{\frac{\alpha}{2}}\nabla^2\psi)\|^2d\tau+\e^{\frac{4}{3}(a+1-b)}\nu^{-\frac{(2+4\alpha)(\gamma-1)}{3}}\int_0^{\tau_1}\|({\thetat}^{\frac{\alpha-1}{2}}\nabla\zeta,{\thetat}^{\frac{\alpha}{2}}\nabla\psi)\|^2d\tau,\\
K_{13}&:=\int_0^{\tau_1}\|{\thetat}^{-2}\|_\infty\|{\thetat}^{\frac{\alpha}{2}}\nabla^2\psi\|\|\nabla\zeta\|_{L^4}^2d\tau\\
&\leq \frac{1}{160}\int_0^{\tau_1} \|{\thetat}^{\frac{\alpha}{2}}\nabla^2\psi\|^2 d\tau+\e^{2(a+1-b)}\nu^{-(4+\alpha)(\gamma-1)}\int_0^{\tau_1}\|{\thetat}^{\frac{\alpha-1}{2}}\nabla\zeta\|^2d\tau.
\end{aligned}
\end{align}
Using \eqref{pe} again, one has 
\begin{align}
\begin{aligned}
\mathcal{H}_{12
	}\leq& \int_{0}^{\tau_1}\int_{\Omega}(\nabla{\Tt} {\phi}+ \nabla \rho{\zeta}) \cdot \nabla \dv{\psi}+\frac{\Tt^{\alpha-1}}{\T}|\nabla\T||\nabla\zeta||\nabla^2\zeta|+\frac{\Tt^{\alpha-1}}{\T^2}|\nabla\Tt||\nabla\T||\nabla\zeta|^2dyd\tau\\	\leq&\e^{1-a}\nu^{-\alpha(\gamma-1)}\int_0^{\tau_1}\int_{\Omega}\p_1\bar u_1(\bar\rho^{\gamma-2}\phi^2+{\bar{\rho}^{2-\gamma}} \zeta^{2})dyd\tau+O(\eta_0)\|(\tilde{\rho}^{\frac{\gamma-2}{2}}\phi,\tilde{\rho}^{\frac{2-\gamma}{2}}\zeta)\|^2\\
&+\e^{2a}\nu^{-(2\alpha+1)(\gamma-1)}\int_0^{\tau_1}\|\sqrt{{\frac{\Tb^{\alpha+1}}{\rhob^2}}}\nabla\phi\|^2d\tau+\frac{1}{160}\int_0^{\tau_1}\|\sqrt{{\thetat}^{\alpha}}\nabla^2\psi\|^2+\|\sqrt{\thetat^{\alpha-1}}\nabla^2\zeta\|^2d\tau\\
&+\e^{\frac{4}{3}(2-(a+b))}\nu^{-(2+\frac{2\alpha}{3})(\gamma-1)}\int_0^{\tau_1}\|\sqrt{\thetat^{\alpha-1}}\nabla\zeta\|^2d\tau,
\end{aligned}
\end{align}
\begin{align}
\begin{aligned}
	\mathcal{H}_{13
	}&\leq \int_{0}^{\tau_1}\int_{\Omega}|\nabla\Tt|\Tt^{\alpha-1}|\nabla\psi||\nabla^2\psi|dyd\tau\\
	&\leq \frac{1}{160}\int_0^{\tau_1}\|{\thetat}^{\frac{\alpha}{2}}\nabla^2\psi\|^2d\tau+\e^{2(1-a)}\nu^{-(\gamma-1)}\int_0^{\tau_1}\|{\thetat}^{\frac{\alpha}{2}}\nabla\psi\|^2d\tau.
\end{aligned}
\end{align}
For $\mathcal{Q}_1$, when  $\|(\frac{\p_1\ub}{\rhob\thetab},\frac{\p_1\ub}{\thetab^2},\frac{\p_1\ub}{\rhob^2})\|_{\infty}\leq 1$, 
we have:
\begin{align}
\begin{aligned}
\mathcal{Q}_{11}&\leq\int_{0}^{\tau_1}\int_{\Omega}\left[|\nabla e_0|+|\nabla u||\nabla\phi|+|\p_i\rho||\nabla\psi|+\left|\pii\left(\psi\cdot\nabla\rhot+\phi\dv\ut\right)\right|\right]\frac{\Tt}{\rho}|\nabla\phi|dyd\tau\\
&\le \e^{\frac{1}{3}}
+\e^{a+1-b}{\nu}^{-(\frac{1}{2}+\alpha)(\gamma-1)}
\bigg(\sup_{\tau\in[0,{\tau_1}]}\|(\tilde{\rho}^{\frac{\gamma-2}{2}}\phi,\tilde{\rho}^{\frac{1}{2}}\psi)\|^2+\int_0^{\tau_1}\|\sqrt{\frac{\Tb^{\alpha+1}}{\rhob^2}}\nabla\phi\|_{H^1}^2d\tau\bigg)\\
&\quad+\e^{1-a}\nu^{-(\alpha+1)(\gamma-1)}\int_0^{\tau_1}\|(\sqrt{{\frac{\Tb^{\alpha+1}}{\rhob^2}}}\nabla\phi,\thetat^{\frac{\alpha}{2}}\nabla\psi)\|^2d\tau,
\end{aligned}
\end{align}
\begin{align}
\begin{aligned}
\mathcal{Q}_{13}&\leq \int_{0}^{\tau_1}\int_{\Omega}\tilde{R}_4\pii(\frac{|\nabla\zeta|}{\T})+\Big[|\pii\rho\zeta_\tau|+|\pii(\rho u)\cdot\nabla\zeta|+|\pii p\dv\psi|\\
&\qquad+|\pii(\rho\psi\cdot\nabla\Tt+R\rho\dv\ut\zeta)|\Big]\frac{|\nabla\zeta|}{\T}dyd\tau\\
	& \le \e^{\frac{1}{3}}+\e^{2(1-a)}\nu^{-((2\alpha+1)(\gamma-1)+2)}\sup_{\tau\in[0,{\tau_1}]}\|(\tilde{\rho}^{\frac{\gamma-2}{2}}\phi,\tilde{\rho}^{\frac{1}{2}}\psi,\tilde{\rho}^{\frac{2-\gamma}{2}}\zeta)\|^2+\frac{1}{160}\sup_{\tau\in[0,{\tau_1}]}\|\sqrt{\frac{\rhot}{\thetat}}\nabla\zeta\|^2\\
 &\qquad+\e^{\frac{4}{3}(a+1-b)}\nu^{-\frac{(6+4\alpha)(\gamma-1)}{3}}\int_0^{\tau_1}\|({\thetat}^{\frac{\alpha-1}{2}}\nabla\zeta,{\thetat}^{\frac{\alpha}{2}}\nabla\psi,\sqrt{\frac{\Tb^{\alpha+1}}{\rhob^2}}\nabla\phi)\|^2d\tau\\
 &\qquad+\frac{1}{160}\int_0^{\tau_1}\|({\thetat}^{\frac{\alpha-1}{2}}\nabla^2\zeta,{\thetat}^{\frac{\alpha}{2}}\nabla^2\psi)\|^2d\tau+\nu^{-(\gamma-1)}\e^{1-2a}.
\end{aligned}
\end{align}

The estimates of $\mathcal{Q}_{12}$  are similar as $\mathcal{Q}_{13}$. 
Thus by \eqref{basic-energy01}, 
we obtain
\begin{align}\label{first1}
\begin{aligned}
\int_\Omega&\left[\frac{\bar\T}{\bar\rho}|\nabla\phi|^2+
\bar\rho|\nabla\psi|^2
+\frac{\bar\rho}{\bar\T}|\nabla\zeta|^2\right]dy+\int_0^{\tau_1}\int_{\Omega}
\bar\theta^\alpha
|\nabla^2\psi|^2+\bar\theta^{\alpha-1}
|\nabla^2\zeta|^2dyd\tau\\
&\le  \e^{(\gamma-1)Za}
\int_0^{\tau_1}\|\sqrt{\frac{{\thetat}^{\alpha+1}}{\rhob^2}}\nabla\phi\|_{H^1}^2d\tau+\frac{1}{16}\int_0^{\tau_1}\|(\thetat^{\frac{\alpha}{2}}\nabla\psi,{\thetat}^{\frac{\alpha-1}{2}}\nabla\zeta)\|^2d\tau\\
 &\qquad+\frac{1}{16}\sup_{\tau\in[0,{\tau_1}]}\|(\tilde{\rho}^{\frac{\gamma-2}{2}}\phi,\tilde{\rho}^{\frac{1}{2}}\psi,\tilde{\rho}^{\frac{2-\gamma}{2}}\zeta)\|^2+\nu^{-(\gamma-1)}\e^{1-2a}
 .\\
\end{aligned}
\end{align}


{\bf Step 4. (The second-order estimates.)} 
Applying $\p_j:=\p_{y_j}$, $j=1,2,3$, to \cref{sys-perturb1}, we have
\begin{align}\label{sys-perturb2}
	\begin{cases}
		\p_{ij}\phi_{\tau}+u \cdot \nabla\p_{ij}\phi+\rho \operatorname{div}\p_{ij} \psi=\check{Q}_0, \\
		\rho\pij\psi_{\tau}+\rho u \cdot \nabla\p_{ij} \psi+\nabla\p_{ij}(p-\tilde{p})=\dv\p_{ij}\left(2\mu(\Tt)\mathbb{D}(\psi)+\lambda(\Tt)\dv\psi \mathbb{I}\right)+\check{Q}_1,\\
		\rho\p_{ij}\zeta_{\tau}+\rho u \cdot \nabla\p_{ij} \zeta+p \operatorname{div}\p_{ij} \psi=\dv\p_{ij}(\kappa(\thetat)\nabla \zeta)+\check{Q}_2, 
	\end{cases}
	\end{align} 
where
\begin{align*}
	\check{Q}_0:=&\p_j \hat{Q}_0-\p_j u\cdot\nabla\pii\phi-\p_j\rho\dv\pii\psi,\\
	\check{Q}_1:=&\p_j\hat{Q}_1-\p_j\rho\pii\psi_\tau-\p_j(\rho u)\cdot\nabla\pii\psi,\\
	\check{Q}_2:=&\p_j\hat{Q}_2-\p_j\rho\pii\zeta_\tau-\p_j(\rho u)\cdot\nabla\pii\zeta-\p_i p\dv\pii\psi.
\end{align*}
Multiplying \cref{sys-perturb2}$_1$ by $\frac{R{\Tt}}{\rho}\pij\phi$, \cref{sys-perturb2}$_2$ by $\pij\psi$, \cref{sys-perturb2}$_3$ by $\frac{1}{\T}\pij\zeta$, and adding the results up, one has
\begin{align*}
	&\p_\tau\left[\frac{R\Tt}{2\rho}|\nabla^2\phi|^2+\frac{\rho|\nabla^2\psi|^2}{2}+\frac{\rho}{2\T}|\nabla^2\zeta|^2\right]+\mu(\Tt)|\nabla\Delta\psi|^2+(\mu(\Tt)+\lambda(\Tt))|\nabla^2\dv\psi|^2+\frac{\kappa(\Tt)}{\T}|\nabla\Delta\zeta|^2\\
	&-\left[\p_\tau\left(\frac{R\Tt}{2\rho}\right)+\dv\left(\frac{R\Tt u}{2\rho}\right)\right]|\nabla^2\phi|^2-\left[\p_\tau\left(\frac{\rho}{\T}\right)+\dv\left(\frac{\rho u}{\T}\right)\right]\frac{|\nabla^2\zeta|^2}{2}+\dv(\cdots)-\frac{\kappa(\Tt)}{\T^2}\nabla\Delta\zeta\cdot\nabla\T\nabla^2\zeta\\
	&-R\Big[\nabla\cdot(\nabla\Tt\phi+\nabla\phi\zeta)+(\nabla\Tt\cdot\nabla\phi+\nabla\rho\cdot\nabla\zeta)\Big]\nabla^2\dv\psi+\nabla\left[2\nabla\mu(\Tt)\cdot\mathbb{D}\psi+\nabla\lambda(\Tt)\cdot\dv\psi\mathbb{I}\right]\cdot\nabla\Delta\psi\\
	&+\sum_{i=1}^{3}\left[2\pii\mu(\Tt)\cdot\dv\mathbb{D}\psi+\pii\lambda(\Tt)\cdot\dv(\dv\psi\mathbb{I})\right]\cdot\pii\Delta\psi+\left[\nabla\left(\nabla\kappa(\Tt)\cdot\nabla\zeta\right)+\Delta\zeta\nabla\kappa(\Tt)\right]\cdot\dv\left(\frac{\nabla^2\zeta}{\T}\right)\\
	&-\frac{R{\Tt}}{\rho}\nabla^2\phi\cdot\check{Q}_0-\nabla^2\psi\cdot\check{Q}_1-\frac{1}{\T}\nabla^2\zeta\cdot\check{Q}_2=0.
\end{align*}
	Integrating the above equation over $\Omega\times[0,{\tau_1}]$, one has
\begin{align*}
	\begin{aligned}
		\mathcal{I}_2 +\int_0^{\tau_1}\mathcal{D}_2d\tau\leq \mathcal{H}_2+\mathcal{Q}_2,
	\end{aligned}
\end{align*}
where

\begin{align*}
&\mathcal{I}_2=\int_{\Omega}\frac{R\Tt}{2\rho}|\nabla^2\phi|^2+\frac{\rho|\nabla^2\psi|^2}{2}+\frac{\rho}{2\T}|\nabla^2\zeta|^2dy,\ \ \ \ \mathcal{D}_2=\int_{\Omega}\Tt^{\alpha}|\nabla^3\psi|^2+\Tt^{\alpha-1}|\nabla\Delta\zeta|^2dy,
\end{align*}

\begin{align*}
	\mathcal{H}_2:=&\int_{0}^{\tau_1}\int_{\Omega}\left[\p_\tau\left(\frac{R\Tt}{2\rho}\right)+\dv\left(\frac{R\Tt u}{2\rho}\right)\right]|\nabla^2\phi|^2+\left[\p_\tau\left(\frac{\rho}{\T}\right)+\dv\left(\frac{\rho u}{\T}\right)\right]\frac{|\nabla^2\zeta|^2}{2}dyd\tau\\
	&-\int_{0}^{\tau_1}\int_{\Omega}\nabla\left[2\nabla\mu(\Tt)\cdot\mathbb{D}\psi+\nabla\lambda(\Tt)\cdot\dv\psi\mathbb{I}\right]\cdot\nabla\Delta\psi dyd\tau\\
	&+\int_{0}^{\tau_1}\int_{\Omega}R\Big[\nabla\cdot(\nabla\Tt\phi+\nabla\phi\zeta)+(\nabla\Tt\cdot\nabla\phi+\nabla\rho\cdot\nabla\zeta)\Big]\nabla^2\dv\psi dyd\tau\\
	&-\int_{0}^{\tau_1}\int_{\Omega}\sum_{i=1}^{3}\left[2\pii\mu(\Tt)\cdot\dv\mathbb{D}\psi+\pii\lambda(\Tt)\cdot\dv(\dv\psi\mathbb{I})\right]\cdot\pii\Delta\psi dyd\tau\\
	&-\int_{0}^{\tau_1}\int_{\Omega}\left[\nabla\left(\nabla\kappa(\Tt)\cdot\nabla\zeta\right)+\Delta\zeta\nabla\kappa(\Tt)\right]\cdot\dv\left(\frac{\nabla^2\zeta}{\T}\right)-\frac{\kappa(\Tt)}{\T^2}\nabla\Delta\zeta\cdot\nabla\T\nabla^2\zeta dyd\tau:=\sum_{i=1}^5\mathcal{H}_{2i},\\
	\mathcal{Q}_2:=&\int_{0}^{\tau_1}\int_{\Omega}\frac{R{\Tt}}{\rho}\nabla^2\phi\cdot\check{Q}_0+\nabla^2\psi\cdot\check{Q}_1+\frac{1}{\T}\nabla^2\zeta\cdot\check{Q}_2dyd\tau=:\mathcal{Q}_{21}+\mathcal{Q}_{22}+\mathcal{Q}_{23}.
\end{align*}
Similar to before, we could also get 
\begin{align}\label{H21}
\begin{aligned}
\mathcal{H}_{21}
 &\le \e^{1-a}\nu^{-(\alpha+1)(\gamma-1)}\int_0^{\tau_1}\|\sqrt{\frac{\Tb^{\alpha+1}}{\rhob^2}}\nabla^2\phi\|^2d\tau
 +\frac{1}{160}\int_0^{\tau_1}\|(\thetat^{\frac{\alpha}{2}}\nabla^3\psi,\thetat^{\frac{\alpha}{2}}\nabla^3\zeta)\|^2d\tau\\
 &\quad+\e^{(a+1-b)}\nu^{-(2+\alpha)(\gamma-1)}\int_0^{\tau_1}\|(\thetat^{\frac{\alpha}{2}}\nabla^2\psi,\thetat^{\frac{\alpha}{2}}\nabla^2\zeta,\thetat^{\frac{\alpha+1}{2}}\nabla^2\phi)\|^2d\tau,
\end{aligned}
\end{align}
where some terms in $\mathcal{H}_{21}$ 
 can be estimated by follows:
\begin{align}
\begin{aligned}
\int_0^{\tau_1}&\|{\thetat}^{-1}\|_{\infty}\|\nabla\psi\|_{\infty}\|(\nabla^2\psi,\nabla^2\phi)\|^2+\|({\thetat}^{-3},(\rhot\thetat)^{-2})\|_{\infty}\|\nabla\psi\|_{\infty}^2\|(\nabla^2\psi,\nabla^2\phi)\|^2d\tau\\
&+\int_0^{\tau_1}\|{\thetat}^{-2}\|_{\infty}\|\nabla^2\zeta\|\|(\nabla^2\psi,\nabla^2\zeta)\|_{L^4}^2d\tau
\le \frac{1}{160}\int_0^{\tau_1}\|(\thetat^{\frac{\alpha}{2}}\nabla^3\psi,\thetat^{\frac{\alpha}{2}}\nabla^3\zeta)\|^2d\tau\\
&\qquad\qquad\qquad\qquad\qquad\qquad+\e^{(a+1-b)}\nu^{-(2+\alpha)(\gamma-1)}\int_0^{\tau_1}\|(\thetat^{\frac{\alpha}{2}}\nabla^2\psi,\thetat^{\frac{\alpha}{2}}\nabla^2\zeta,\thetat^{\frac{\alpha+1}{2}}\nabla^2\phi)\|^2d\tau.
\end{aligned}
\end{align}
And we get
\begin{align}
\begin{aligned}
\mathcal{H}_{22}\leq&\int_{0}^{\tau_1}\int_{\Omega}\nabla\left[2\nabla\mu(\Tt)\cdot\mathbb{D}\psi+\nabla\lambda(\Tt)\cdot\dv\psi\mathbb{I}\right]\cdot\nabla\Delta\psi dyd\tau\\
\leq &\e^{1-a}\nu^{-(\gamma-1)}\int_0^{\tau_1}\|({\thetat}^{\frac{\alpha}{2}}\nabla\psi,{\thetat}^{\frac{\alpha}{2}}\nabla^2\psi,{\thetat}^{\frac{\alpha}{2}}\nabla^3\psi)\|^2d\tau,\\
\mathcal{H}_{23}\leq&\int_{0}^{\tau_1}\int_{\Omega}R\Big[\nabla\cdot(\nabla\Tt\phi+\nabla\phi\zeta)+(\nabla\Tt\cdot\nabla\phi+\nabla\rho\cdot\nabla\zeta)\Big]\nabla^2\dv\psi dyd\tau\\
\leq &\e^{2(a+1-b)}\nu^{-(2\alpha+1)(\gamma-1)}\bigg(\sup_{\tau\in[0,{\tau_1}]}\|\rhot^{\frac{\gamma-2}{2}}\phi\|^2+\int_0^{\tau_1}\|(\sqrt{\frac{\thetat^{\alpha+1}}{\rhot^2}}\nabla\phi,\thetat^{\frac{\alpha-1}{2}}\nabla\zeta)\|_{H^1}^2d\tau\bigg)\\
&+\frac{1}{160}\int_0^{\tau_1}\|{\thetat}^{\frac{\alpha}{2}}\nabla^3\psi\|^2d\tau,\\
\mathcal{H}_{25}\leq& \frac{1}{160}\int_0^{\tau_1}\|\thetat^{\frac{\alpha-1}{2}}\nabla^3\zeta\|^2d\tau+\e^{(a+1-b)}\nu^{-(1+\alpha)(\gamma-1)}\int_0^{\tau_1}\|(\thetat^{\frac{\alpha-1}{2}}\nabla\zeta,\thetat^{\frac{\alpha-1}{2}}\nabla^2\zeta)\|^2d\tau.
\end{aligned}
\end{align}
The estimates of $\mathcal{H}_{24}$  are similar as $\mathcal{H}_{22}$.
Also, we have 
\begin{align}
\begin{aligned}
\mathcal{Q}_{21}
&\le \e^{\frac{1}{3}}+\int_0^{\tau_1}\|\rhot^{-1}\|_{\infty}\|\nabla\psi\|_{\infty}\|\nabla^2\phi\|^2d\tau+{\e^{1-a}\nu^{-((\alpha+2)(\gamma-1)+3)}}\sup_{\tau\in[0,\tau_1]}\|(\tilde{\rho}^{\frac{\gamma-2}{2}}\phi,\tilde{\rho}^{\frac{1}{2}}\psi)\|^2\\
&+\e^{a+1-b}\nu^{-1-(\alpha+2)(\gamma-1)}\bigg(\int_0^{\tau_1}\|(\thetat^{\frac{\alpha}{2}}\nabla\psi,\thetat^{\frac{\alpha}{2}}\nabla^2\psi,\sqrt{{\frac{\Tb^{\alpha+1}}{\rhob^2}}}\nabla\phi,\sqrt{{\frac{\Tb^{\alpha+1}}{\rhob^2}}}\nabla^2\phi)\|^2d\tau\bigg)\\
&+\frac{1}{160}\int_0^{\tau_1}\|\thetat^{\frac{\alpha}{2}}\nabla^3\psi\|^2d\tau,
\end{aligned}
\end{align}
\begin{align}\label{Q22}
\begin{aligned}
\mathcal{Q}_{22}&\leq \int_{0}^{\tau_1}\int_{\Omega}\nabla^2\psi\bigg\{\pii^2\tilde{R}+\pii^2\rho\psi_{\tau}+\p_i\rho\nabla\psi_{\tau}+\pii^2(\rho u)\cdot\nabla\psi+\pii(\rho u)\cdot\nabla^2\psi\\
&\qquad\ +\pii^2\Big(\rho\psi \cdot \nabla \tilde{u}+\frac{\phi}{\tilde{\rho}}\nabla\tilde{p}\Big)\bigg\}dyd\tau\\
&\ \le \e^{\frac{1}{3}}+\frac{1}{160}\big(\sup_{\tau\in[0,\tau_1]}\|\rhot^{\frac{1}{2}}\nabla^2\psi\|^2+\int_0^{\tau_1}\|(\sqrt{\thetat^{\alpha}}\nabla^3\psi,\thetat^{\frac{\alpha}{2}}\nabla^3\zeta)\|^2d\tau\big)
\\
&\qquad+
\e^{a+1-b}\nu^{-1-(\alpha+2)(\gamma-1)}\bigg(\sup_{\tau\in[0,\tau_1]}\|(\sqrt{\frac{\thetab}{\rhob}}\phi,\sqrt{\rhob}\psi,\sqrt{\frac{\rhob}{\thetab}}\zeta)\|^2\\
&\qquad\ +\int_0^{\tau_1}\|(\thetat^{\frac{\alpha}{2}}\nabla\psi,\thetat^{\frac{\alpha-1}{2}}\nabla\zeta,\sqrt{\frac{\thetat^{\alpha+1}}{\rhot^2}}\nabla\phi)\|_{H^1}^2d\tau\bigg),
\end{aligned}
\end{align}
where some terms in $\mathcal{Q}_{22}$ 
 can be estimated by follows:
\begin{align}
\begin{aligned}
\int_0^{\tau_1}&\nu^{-\gamma}\|\nabla^2\phi\|\|\nabla^2\psi\|_{L^4}^2+\nu^{-\gamma}\|\nabla^2\phi\|\|\nabla\phi\|_{L^4}^2+\|\thetat^{-1}\|_{\infty}\|\nabla\phi\|\|\nabla^2\psi\|_{L^4}^2\\
&+\|\rhot^{-2}\|_{\infty}\|\nabla^2\psi\|\|\nabla\phi\|_{L^6}^3+\|(\rhot^{-2}\thetat^{-3})\|_{\infty}\|\nabla\phi\|_{L^6}^2(\|(\nabla\psi,\nabla\zeta)\|_{L^6}^4)d\tau\\
\le &\frac{1}{160}\int_0^{\tau_1}\|\thetat^{\frac{\alpha}{2}}\nabla^3\psi\|^2d\tau+\e^{a+1-b}\nu^{-1-(\alpha+2)(\gamma-1)}\int_0^{\tau_1}\|(\thetat^{\frac{\alpha}{2}}\nabla^2\psi,\thetat^{\frac{\alpha+1}{2}}\nabla\phi,\thetat^{\frac{\alpha+1}{2}}\nabla^2\phi)\|^2d\tau,\\
\int_0^{\tau_1}&\|\thetat^{-5}\|_{\infty}\|\nabla\psi\|_{L^6}^6+\|\thetat^{-2}\|_{\infty}\|(\nabla\psi,\nabla\zeta)\|_{L^4}^4+\e^{4(1-a)}\nu^{-4\gamma}\|\nabla\phi\|_{L^4}^4\\
&\qquad+\|(\rhot^{2}\thetat)^{-1}\|_{\infty}\|\nabla\phi\|_{L^4}^2\|\nabla^2\psi\|_{L^4}^2d\tau\\
\le &\frac{1}{160}\int_0^{\tau_1}\|\thetat^{\frac{\alpha}{2}}\nabla^3\psi\|^2d\tau+\e^{2(a+1-b)}\nu^{-(2+\alpha)(\gamma-1)}\int_0^{\tau_1}\|(\thetat^{\frac{\alpha}{2}}\nabla\zeta,\thetat^{\frac{\alpha}{2}}\nabla\psi,\thetat^{\frac{\alpha+1}{2}}\nabla\phi)\|_{H^1}^2d\tau.
\end{aligned}
\end{align}
The estimates of $\mathcal{Q}_{23}$  are similar as $\mathcal{Q}_{22}.$ Thus, by \eqref{basic-energy01}, \eqref{H21}-\eqref{Q22}, we obtain
\begin{align}\label{second1}
\begin{aligned}
&\int_\Omega\left[\frac{\bar\T}{\bar\rho}|\nabla^2\phi|^2+
\bar\rho|\nabla^2\psi|^2
+\frac{\bar\rho}{\bar\T}|\nabla^2\zeta|^2\right]dy+\int_0^{\tau_1}\int_{\Omega}
\bar\theta^\alpha
|\nabla^3\psi|^2+\bar\theta^{\alpha-1}
|\nabla^3\zeta|^2dyd\tau\\
\le 
&\e^{\frac{1}{3}}+\e^{(1-a)}\nu^{-3\gamma}+\frac{1}{16}\int_0^{\tau_1}\|(\thetat^{\frac{\alpha+1}{2}}\nabla\phi,\thetat^{\frac{\alpha}{2}}\nabla\psi)\|_{H^1}^2\\
&+\frac{1}{16}\bigg(\sup_{\tau\in[0,{\tau_1}]}\|(\sqrt{\frac{\thetab}{\rhob}}\phi,\sqrt{\rhob}\psi,\sqrt{\frac{\rhob}{\thetab}}\zeta)\|^2+\int_0^{\tau_1}\|(\thetat^{\frac{\alpha}{2}}\nabla\psi,\thetat^{\frac{\alpha-1}{2}}\nabla\zeta,\sqrt{\frac{\thetat^{\alpha+1}}{\rhot^2}}\nabla\phi)\|_{H^1}^2d\tau\bigg).
\end{aligned}
\end{align}

Similar to Step 2, we could also get the higher order estimates for $\|\nabla^2\phi\|.$  We omit the details but only show the final results:
\begin{align}\label{eqj0223}
	\begin{aligned}
	\sup_{\tau\in[0,{\tau_1}]}&\int_{\Omega}\left(\frac{\Tb^{2\alpha}}{
	\bar\rho^3}|\nabla^2\phi|^2+
	\frac{\Tb^\alpha\nabla\psi\nabla^2\phi}{\bar\rho}\right)(\tau)dy
	+
	\int_0^{\tau_1}\int_{\Omega}\frac{{\Tb}^{\alpha+1}}{\bar\rho^2}|\nabla^2\phi|^2dyd\tau\\
	&\leq \e^{\frac{1}{3}}
	+\frac{1}{160}\int_0^{\tau_1}\int_{\Omega}
	\bar\theta^\alpha|\nabla^3\psi|^2+\Tb^{\alpha-1}|\nabla^3\zeta|^2dyd\tau\\
&\qquad+\nu^{-(\gamma-1)}\int_0^{\tau_1}\|(\thetat^{\frac{\alpha}{2}}\nabla\psi,\thetat^{\frac{\alpha-1}{2}}\nabla\zeta)\|_{H^1}^2+\|\sqrt{\frac{\thetat^{\alpha+1}}{\rhot^2}}\nabla\phi\|^2d\tau.	
\end{aligned}
\end{align}

Combining \eqref{basic-energy01},\eqref{eqj0123},\eqref{first1}, and \eqref{second1}-\eqref{eqj0223}, we finally get 
\begin{align}\label{step4}
	\begin{aligned}
		&\sup _{\tau \in[0,{\tau_1})} \int_{\Omega}\left({{\rhob}}^{\gamma-2} \phi^{2}+{{\rhob}} \psi^{2}+{{\rhob}}^{2-\gamma} \zeta^{2}+\left[\frac{\bar\T}{\bar\rho}|\nabla\phi|^2+
\bar\rho|\nabla\psi|^2
+\frac{\bar\rho}{\bar\T}|\nabla\zeta|^2\right]\right)(\tau, y) d y \\
&+\sup _{\tau \in[0,{\tau_1})}\int_\Omega\left[\frac{\bar\T}{\bar\rho}|\nabla^2\phi|^2+
\bar\rho|\nabla^2\psi|^2
+\frac{\bar\rho}{\bar\T}|\nabla^2\zeta|^2\right]dy+\int_{0}^{{\tau_1}} \int_{\Omega}\px\bar{u}\left({{\rhob}}^{\gamma-2} \phi^{2}+{{\rhob}} \psi^{2}+{{\rhob}}^{2-\gamma} \zeta^{2}\right)dyd\tau\\
 & +\int_{0}^{{\tau_1}} \int_{\Omega}{{\thetab}}^{\alpha} |\nabla\psi|^{2}+{\thetab}^{\alpha-1} |\nabla\zeta|^{2}+\bar\theta^\alpha
|\nabla^2\psi|^2+\bar\theta^{\alpha-1}
|\nabla^2\zeta|^2+\frac{{\Tb}^{\alpha+1}}{\bar\rho^2}|\nabla\phi|^2+\frac{{\Tb}^{\alpha+1}}{\bar\rho^2}|\nabla^2\phi|^2dyd\tau\\
&+\int_0^{\tau_1}\int_{\Omega}
\bar\theta^\alpha
|\nabla^3\psi|^2+\bar\theta^{\alpha-1}
|\nabla^3\zeta|^2dyd\tau\le 
\e^{\frac{1}{3}}\nu^{-2(\gamma-1)}.
\end{aligned}
\end{align}

The higher-order derivative estimate of $(\phi,\psi,\zeta)$ in \eqref{sys-perturb} 
 is similar to that steps 2 and 3, we omit it for brevity.
Therefore, \eqref{eqe01}-\eqref{eqe04} can be derived from \eqref{basic-energy01}, \eqref{eqj0123}, \eqref{first1} and \eqref{step4}. Then by the initial perturbation \eqref{eqe095} and the Sobolev inequality in 3D, we have
\begin{align}\label{eqe0021}
\begin{aligned}
&\sup_{\tau\in[0,\tau_1(\e)]}\|\phi(x,t)\|_{L^\infty(\Omega)}\\
&\le C\sup_{\tau\in[0,\tau_1(\e))}\Big(\e^{-(b-1)}\|\phi\|^{\frac{1}{2}}\|\nabla\phi\|^{\frac{1}{2}}+\e^{-\frac{(b-1)}{2}}\|\nabla\phi\|+\|\phi\|^{\frac{1}{4}}\|\nabla^2\phi\|^{\frac{3}{4}}\Big)\\
&\le C\nu^{-\frac{\gamma}{2}}\e^{-(b-1)}\sup_{\tau\in[0,\tau_1(\e)]}\big(\|\sqrt{\bar\rho^{\gamma-2}}\phi\|\|\sqrt{\frac{\bar\theta}{\bar\rho}}\nabla\phi\|\big)^{\frac{1}{2}}+C\nu^{-\frac{\gamma}{2}}\e^{-\frac{(b-1)}{2}}\sup_{\tau\in[0,\tau_1(\e)]}\big(
\|\sqrt{\frac{\bar\theta}{\bar\rho}}\nabla\phi\|\big)\\
&\qquad+C\nu^{-\frac{\gamma}{2}}\sup_{\tau\in[0,\tau_1(\e)]}\big(\|\sqrt{\bar\rho^{\gamma-2}}\phi\|^{\frac{1}{4}}\|\sqrt{\frac{\bar\theta}{\bar\rho}}\nabla^2\phi\|^{\frac{3}{4}}\big)
\\
&\le C\big(\e^{\frac{1}{6}+1-b-\frac{3\gamma-1}{4}Za}+\e^{\frac{1}{6}+\frac{1-b}{2}-\frac{2\gamma-1}{2}Za}+\e^{\frac{1}{6}-\frac{5\gamma-3}{4}Za}\big)\le \e^{a},
\end{aligned}
\end{align}
\begin{align}\label{eqe0020} 
\begin{aligned}
&\sup_{\tau\in[0,\tau_1(\e)]}\|\psi(x,t)\|_{L^\infty(\Omega)}\\
&\le C\sup_{\tau\in[0,\tau_1(\e))}\Big(\e^{-(b-1)}\|\psi\|^{\frac{1}{2}}\|\nabla\psi\|^{\frac{1}{2}}+\e^{-\frac{(b-1)}{2}}\|\nabla\psi\|+\|\psi\|^{\frac{1}{4}}\|\nabla^2\psi\|^{\frac{3}{4}}\Big)\\
&\le C\nu^{-\frac{1}{2}}\e^{-(b-1)}\sup_{\tau\in[0,\tau_1(\e)]}\big(\|\sqrt{\bar\rho}\psi\|\|\sqrt{\bar\rho}\nabla\psi\|\big)^{\frac{1}{2}}+C\nu^{-\frac{1}{2}}\e^{-\frac{(b-1)}{2}}\sup_{\tau\in[0,\tau_1(\e)]}\big(\|\sqrt{\bar\rho}\nabla\psi\|\big)\\
&\qquad+C\nu^{-\frac{1}{2}}\sup_{\tau\in[0,\tau_1(\e)]}\big(\|\sqrt{\bar\rho}\psi\|^{\frac{1}{4}}\|\sqrt{\bar\rho}\nabla^2\psi\|^{\frac{3}{4}}\big)
\\
&\le C\big(\e^{\frac{1}{6}+1-b-\frac{\gamma+1}{4}Za}+\e^{\frac{1}{6}+\frac{1-b}{2}-\frac{\gamma}{2}Za}+\e^{\frac{1}{6}-\frac{3\gamma-1}{4}Za}\big)\le \e^{a},
\end{aligned}
\end{align}
\begin{align}\label{eqe0022}
\begin{aligned}
&\sup_{\tau\in[0,\tau_1(\e)]}\|\zeta(x,t)\|_{L^\infty(\Omega)}\\
&\le C\sup_{\tau\in[0,\tau_1(\e))}\Big(\e^{-(b-1)}\|\zeta\|^{\frac{1}{2}}\|\nabla\zeta\|^{\frac{1}{2}}+\e^{-\frac{(b-1)}{2}}\|\nabla\zeta\|+\|\zeta\|^{\frac{1}{4}}\|\nabla^2\zeta\|^{\frac{3}{4}}\Big)\\
&\le C\nu^{-\frac{3}{4}}\e^{-(b-1)}\sup_{\tau\in[0,\tau_1(\e)]}\big(\|\sqrt{\bar\rho^{2-\gamma}}\zeta\|\|\sqrt{\frac{\bar\rho}{\bar\theta}}\nabla\zeta\|\big)^{\frac{1}{2}}+C\nu^{-\frac{1}{2}}\e^{-\frac{(b-1)}{2}}\sup_{\tau\in[0,\tau_1(\e)]}\big(\|\sqrt{\frac{\bar\rho}{\bar\theta}}\nabla\zeta\|\big)\\
&\qquad+C\sup_{\tau\in[0,\tau_1(\e)]}\big(\nu^{-\frac{5}{8}}\|\sqrt{\bar\rho^{2-\gamma}}\zeta\|^{\frac{1}{4}}\|\sqrt{\frac{\bar\rho}{\bar\theta}}\nabla^2\zeta\|^{\frac{3}{4}}\big)
\\
&\le C\big(\e^{\frac{1}{6}+1-b-\frac{(\gamma+2)}{4}Za}+\e^{\frac{1}{6}+\frac{1-b}{2}-\frac{\gamma}{2}Za}+\e^{\frac{1}{6}-(\frac{6\gamma-1}{8})Za}\big)\le \e^{(\gamma-1)a},
\end{aligned}
\end{align}
where $\e\ll1$
. Thus the a priori assumptions \eqref{eq09_1}-\eqref{eq09_2} are verified and the proof of the Lemma \ref{ape1} is finished.
\end{proof}

Since the a priori estimates \eqref{eqe01}-\eqref{eqe04} are better than the a priori assumptions \eqref{eq09_1}-\eqref{eq09_2} 
in the time interval $[0,\tau_1(\e)]$  with $\tau_1(\e)$ being the maximum existence time, we can 
 extend the local solution to the global one in $[0,+\infty)$ for small but fixed $\e$ by a standard continuation argument. 

\subsection{Proof of 
 Theorem \ref{mt}.}
 Once the global existence and uniform estimates 
 are proved, it remains to prove \eqref{eqa} with $a$ and $Z$ given in \eqref{eqda}.
From \eqref{nrw6} in Lemma \ref{sr1}, \eqref{eqsr} in Lemma \ref{sr}
, for any given positive constant $h$, there exists a constant $C_h>0$ which is independent of $\e$ {such that }
\begin{align}\label{eqes001}
\begin{aligned}
\sup_{t\ge h}\|\rho(x,t)-\rho^r(\frac{x_1}{t})\|_{L^\infty(\Omega_\e)}
&\le 
\sup_{t
\in[0,+\infty]
}\|\phi(x,t)\|_\infty
+\sup_{t\ge h}\|\rhob(x_1,t)-\rho_{\nu}^{r}(\frac{x_1}{t})\|_\infty
\\
&\qquad+\sup_{t\ge h}
\|\rho_{\nu}^{r}(\frac{x_1}{t})-\rho^r(\frac{x_1}{t})\|_\infty
\\
&\le C_h\big( \e^a+ \e^{a}|\log\e|+\nu\big)\le C_h\e^{Za}|\log\e|,
\end{aligned}
\end{align}
\begin{align}\label{eqes002}
\begin{aligned}
\sup_{t\ge h}\|m(x,t)-m^r(\frac{x_1}{t})\|_{L^\infty(\Omega_\e)}
&\le 
C\sup_{t
\in[0,+\infty]
}\Big(\|\phi(x,t)\|_\infty+\|\psi(x,t)\|_\infty\Big)
\\
&\qquad+\sup_{t\ge h}\|\bar m(x_1,t)-m_{\nu}^{r}(\frac{x_1}{t})\|_\infty
+\sup_{t\ge h}\|m_{\nu}^{r}(\frac{x_1}{t})-m^r(\frac{x_1}{t})\|_\infty
\\
&\le C_h\big( \e^a+ \e^{a}|\log\e|+\nu\big)\le C_h\e^{Za}|\log\e|,
\end{aligned}
\end{align}
\begin{align}\label{eqes003}
\begin{aligned}
\sup_{t\ge h}\|n(x,t)-n^r(\frac{x_1}{t})\|_{L^\infty(\Omega_\e)}
&\le 
C\sup_{t
\in[0,+\infty]
}\Big(\|\phi(x,t)\|_\infty+\|\zeta(x,t)\|_\infty\Big)
\\
&\qquad+\sup_{t\ge h}\|\bar \rho\bar\T(x_1,t)-n_{\nu}^{r}(\frac{x_1}{t})\|_\infty
+\sup_{t\ge h}\|n_{\nu}^{r}(\frac{x_1}{t})-n^r(\frac{x_1}{t})\|_\infty
\\
&\le C_h\big( \e^a+ \e^{a}|\log\e|+\nu\big)\le C_h\e^{Za}|\log\e|,
\end{aligned}
\end{align}
where we use the following fact  
\begin{align}
\sup_{\tau\in[0,+\infty]}\|\zeta(y,\tau)\|_\infty\le \e^a.
\end{align}
Then the proof of Theorem \ref{mt} is completed.

\section{{Exponential decay rate}
}
Now we prove the exponential decay rate 
 for the non-zero mode of solution $(\rho,u,\T)$  
 to the problem \eqref{NS}. 
For any $T>0$, we take the solution space for \eqref{sys-perturb}-\eqref{R_4} as follows,
\begin{align}
\begin{aligned}
\mathcal{B}(0,T)=\big\{&(\phi,\psi,\zeta)\big|(\phi,\psi,\zeta)\  \text{is periodic in}\ y'=(y_2,y_3)\in\mathbb{T}^2, (\phi,\psi,\zeta)\in C\big(0,T; H^3(\Omega)\big),\\
&
\nabla\phi \in L^2\big(0,T;H^2(\Omega)\big),(\nabla\psi,\nabla\zeta)\in L^2\big(0,T;H^3(\Omega)\big)
\big\},
\end{aligned}
\end{align}
and the initial data
\begin{align}\label{eqins1}
(\phi_0,\psi_0,\zeta_0)(y)\in H^3(\Omega).
\end{align}

We show the following result for the exponential decay rate of the non-zero mode $(\phia,\psia,\zetaa)$.
\begin{Thm}\label{expde}
Under the assumptions of Theorem \ref{mt1}, the Cauchy problem \eqref{sys-perturb}-\eqref{R_4} admits a unique solution $(\phi,\psi,\zeta)\in\mathcal B(0,+\infty)$, satisfying 
\begin{align}\label{eqes5}
\|(\phi,\psi,\zeta)\|_{W^{1,\infty}(\Omega)}\rightarrow 0, \ \text{as}\ \tau\rightarrow+\infty.
\end{align}
The non-zero mode of $(\phi,\psi,\zeta)$ is 
\begin{align}\label{eqe6}
(\phia,\psia,\zetaa)(\tau,y):=(\phi,\psi,\zeta)(\tau,y)-\int_{\mathbb T^2}(\phi,\psi,\zeta)(\tau,y_1,y')dy',
\end{align}
which decays exponentially fast in time, i.e.,  
\begin{align}\label{eqe7}
\|(\phia,\psia,\zetaa)(\tau,\cdot)\|_{L^\infty(\Omega)}\le O(\e^{\frac{1}{3}})
e^{-\e^{n_2}\tau},\ \  \forall \tau\ge
0,
\end{align}
where the positive constant $C>0$ is independent of $\e,\eta
$ and $\tau$.
\end{Thm}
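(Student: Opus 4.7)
\textbf{Proof plan for Theorem \ref{expde}.} The strategy is to split the perturbation $(\phi,\psi,\zeta)$ into its zero mode $(\mathring{\phi},\mathring{\psi},\mathring{\zeta})=\Do(\phi,\psi,\zeta)$ and non-zero mode $(\phia,\psia,\zetaa)=\Dn(\phi,\psi,\zeta)$ using the projections introduced in Section 2.3, and then run parallel energy estimates for the two modes. The non-zero mode satisfies $\int_{\Torus^2_\e}(\phia,\psia,\zetaa)(\tau,y_1,y')\,dy'=0$, so Poincar\'e's inequality on $\Torus^2_\e=[0,\e^{b-1}]^2$ yields
\begin{equation*}
\|(\phia,\psia,\zetaa)\|_{L^2(\Torus^2_\e)}\le C\e^{b-1}\|\nabla_{y'}(\phia,\psia,\zetaa)\|_{L^2(\Torus^2_\e)}.
\end{equation*}
This is the structural reason why the non-zero modes decay exponentially: after scaling back the viscosity $\e\thetat^{\alpha}$ and the lower density bound $\bar\rho\ge\nu^{\gamma-1}e^{\bar S}$, the dissipation associated to $(\phia,\psia,\zetaa)$ controls its own $L^2$ norm with a coercive coefficient on the order of $\e^{1-2(b-1)}\nu^{(\alpha+1)(\gamma-1)}=\e^{n_2}$ for a suitable $n_2>1$ (under the admissible range of $b$ and with $\nu=\e^{Za}|\log\e|$).

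First, I would apply $\Dn$ to \eqref{sys-perturb} to obtain a closed-form perturbation system for $(\phia,\psia,\zetaa)$, where the source terms split naturally into a ``self-interaction'' part (linear and quadratic in the non-zero mode) and a ``forcing'' part produced by the zero mode, the ansatz, and the error $e_0,\ev,e_4$. By Lemma~\ref{lem6}, Proposition~\ref{prop-decom}, Lemma~\ref{er} and the a priori bounds \eqref{eqe01}--\eqref{eqje0223} already established in Lemma~\ref{ape1}, the forcing part is small and decays like $\eta_0 e^{-\e^{n_2}\tau}$. Next, I would perform the same hierarchy of energy estimates as in the proof of Lemma~\ref{ape1} (basic weighted $L^2$ estimate, estimate for $\nabla\phia$ via the momentum equation multiplied by $\Lambda(\thetat)\rho^{-2}\nabla\phia$, and first- then second- then third-order derivative estimates for $\psia,\zetaa$), but each time replacing the Gronwall argument by Poincar\'e's inequality on $\Torus^2_\e$ to close the inequality in the form
\begin{equation*}
\frac{d}{d\tau}\mathcal{E}_k(\phia,\psia,\zetaa)+\e^{n_2}\mathcal{E}_k(\phia,\psia,\zetaa)\le C\eta_0 e^{-\e^{n_2}\tau},
\end{equation*}
where $\mathcal{E}_k$ is the weighted $H^k$-energy for $k\le 3$. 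Gronwall then yields $\mathcal{E}_k\le O(\e^{1/3})e^{-\e^{n_2}\tau}$, using the initial bound \eqref{eqins1} and the smallness of $\eta_0=O(\e^{20})$.

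The main obstacle is controlling the interactions between the non-zero and zero modes that appear through products of the form $\Dn(\mathring{f}\cdot \acute{g})$ and $\Dn(\acute{f}\cdot\acute{g})$ in the quasilinear coefficients of \eqref{sys-perturb}; without care these couple the $\acute\cdot$-estimates back to the $\mathring\cdot$-estimates and prevent closure at the exponential rate. To handle this I would use Proposition~\ref{prop-decom}(ii) to isolate the quadratic remainders and then invoke the a priori $H^3$-bounds from \eqref{step4} together with the G--N inequalities \eqref{G-N-type-1}--\eqref{eq46i} to dominate the quadratic terms by a small multiple of the dissipation of the non-zero mode itself, absorbing them into the left-hand side. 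An additional delicate point is the degeneracy caused by vacuum: the lower bound $\bar\rho\gtrsim\nu^{\gamma-1}$ must be combined with the exponent budget so that the resulting $n_2$ remains strictly greater than $1$; this is where the relation between $a,Z,\gamma,\alpha,b$ in \eqref{eqda} and \eqref{eq091} enters essentially.

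Finally, the $L^\infty$ decay \eqref{eqe7} follows from the $H^3$ exponential decay via the three-dimensional G--N inequality of Corollary~\eqref{eq46i}, noting that on the non-zero mode Poincar\'e eliminates the $\Lambda$-independent term in the decomposition so the resulting constant depends only polynomially on $\e^{-1}$. Combining this with the zero-mode estimate (which has no exponential factor, only algebraic smallness in $\e$) and unpacking $(\phi,\psi,\zeta)=(\mathring\phi+\phia,\mathring\psi+\psia,\mathring\zeta+\zetaa)$ together with the decay of $(\rhot-\bar\rho,\ut-\bar u,\Tt-\bar\theta)$ from Lemma~\ref{lem-periodic-solution} delivers \eqref{eqes5}, while the exponential estimate \eqref{eqe7} for $(\phia,\psia,\zetaa)$ alone is the direct translation of the $H^3$ bound under G--N.
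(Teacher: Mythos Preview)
Your outline for the exponential decay \eqref{eqe7} is essentially the paper's own route: derive a non-zero-mode system by applying $\Dn$ to \eqref{sys-perturb}, run the same weighted energy hierarchy as in Lemma~\ref{ape1} on $(\phia,\psia,\zetaa)$, exploit Poincar\'e on $\Torus^2_\e$ to dominate the weighted $H^k$-energy by the dissipation (the paper writes this as $H(\tau)\le \nu^{-(\gamma+1)}G(\tau)+O(\eta_0)e^{-\e^{n_2}\tau}$), apply Gronwall, and finish with the G--N inequality \eqref{eq46i}. Your handling of the cross-mode interaction terms via Proposition~\ref{prop-decom}(ii), Lemma~\ref{lem6} and the a~priori bounds is also how the paper absorbs those contributions.

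There is, however, a genuine gap in your argument for \eqref{eqes5}. You propose to obtain $\|(\phi,\psi,\zeta)\|_{W^{1,\infty}}\to 0$ by writing $(\phi,\psi,\zeta)=(\mathring\phi+\phia,\mathring\psi+\psia,\mathring\zeta+\zetaa)$ and combining the exponential decay of the non-zero mode with ``the zero-mode estimate (which has no exponential factor, only algebraic smallness in $\e$)''. But algebraic smallness in $\e$ is \emph{not} decay in $\tau$: with your reasoning the zero mode $(\mathring\phi,\mathring\psi,\mathring\zeta)$ could stay of size $\e^{a}$ forever, and then $\|(\phi,\psi,\zeta)\|_{W^{1,\infty}}$ would not tend to $0$ as $\tau\to\infty$. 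The paper avoids this pitfall by not splitting at all for \eqref{eqes5}: from the global estimates in $\mathcal B(0,+\infty)$ one has $\int_0^{+\infty}\|\nabla\phi\|_{H^2}^2\,d\tau<\infty$ and $\int_0^{+\infty}\|(\nabla\psi,\nabla\zeta)\|_{H^3}^2\,d\tau<\infty$, and the perturbation equations show that $\tau\mapsto\|\nabla(\phi,\psi,\zeta)\|^2_{L^2}$ and $\tau\mapsto\|\nabla^2(\phi,\psi,\zeta)\|^2_{L^2}$ lie in $W^{1,1}((0,+\infty))$; hence both quantities converge to $0$, and G--N then gives the $W^{1,\infty}$ decay of the \emph{full} perturbation. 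You should replace your last paragraph's argument for \eqref{eqes5} by this $W^{1,1}$-in-time mechanism (or, alternatively, produce a separate time-decay statement for the zero mode, which your current outline does not supply).
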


 The proof of Theorem \ref{expde} is based on $L^2$-energy method through  the decomposition idea in which zero and non-zero modes estimate should be  analyzed 
 separately,  
 reference to 
  \cite{Yuan2023SIAM}.
But since the zero mode of any function is independent of the variables $y'=(y_2,y_3)$, we only estimate the non-zero mode as follows. 

Considering the zero mode of $(\phi,\psi,\zeta)$, we apply $\D_0$ to \cref{sys-perturb} to have
\begin{align}\label{sys-0mode}
	\begin{cases}
		\mathring{\phi}_{\tau}+\um_1 \px \phim+\rhom \px \psim_1+\psim_1 \px \mathring{\tilde{\rho}}+\phim \px \mr{\tilde{u}}_1=\hat{E}_0, \\
		\rhom\psim_{\tau}+\rhom \um_1\px \psim+\px(\mr{p}-\mr{\tilde{p}})\mathbb{I}_1+
		\rhom\psim \cdot \nabla \mr{\tilde{u}}-\frac{\phim}{\mr{\tilde{\rho}}}\nabla\mr{\tilde{p}}=\px\big[\mu(\mr{\Tt})\px\psim+(\lambda(\mr{\Tt})+\mu(\mr{\Tt}))\px\psim_1\mathbb{I}_1\big]+\hat{E},\\
		\rhom\zetam_{\tau}+\rhom \um_1 \px \zetam+\mr{p}\px \psim_1+\rhom\psim_1 \px \mr{\tilde{\theta}}+(\gamma-1)\rhom\zetam \px \mr{\tilde{u}}_1 ={\p_1\big(\kappa(\mr{\thetat})\p_1\mr{\zeta}\big)}
		+\hat{E}_4,
	\end{cases}
\end{align}
where
\begin{align*}
	\hat{E}_0:=&\mathring{\phi}_{\tau}+\um \cdot \nabla \phim+\rhom \operatorname{div} \psim+\psim \cdot \nabla \mathring{\tilde{\rho}}+\phim \operatorname{div} \mr{\tilde{u}}\\
	&\qquad-\D_0\left(\phi_{\tau}+u \cdot \nabla \phi+\rho \operatorname{div} \psi+\psi \cdot \nabla \tilde{\rho}+\phi \operatorname{div} \tilde{u}\right)-\D_{0}e_0,\\
	\hat{E}:=&\rhom\psim_{\tau}+\rhom \um \cdot \nabla \psim+\px(\mr{p}-\mr{\tilde{p}})\mathbb{I}_1+
	\rhom\psim \cdot \nabla \mr{\tilde{u}}-\frac{\phim}{\mr{\tilde{\rho}}}\nabla\mr{\tilde{p}}-\operatorname{div}\left(2\mu(\mr{\Tt})\mathbb{D}(\psim)+\lambda(\mr{\Tt})\operatorname{div}\psim \mathbb{I}\right)
	+\D_0\tilde{R}\\
	&-\D_0\left[\rho\psi_{\tau}+\rho u \cdot \nabla \psi+\nabla(p-\tilde{p})+
	\rho\psi \cdot \nabla \tilde{u}-\frac{\phi}{\tilde{\rho}}\nabla\tilde{p}-\dv\left(2\mu(\Tt)\mathbb{D}(\psi)+\lambda(\Tt)\dv\psi \mathbb{I}\right)
	\right],\\
	\hat{E}_4:=&\rhom\zetam_{\tau}+\rhom \um \cdot \nabla \zetam+(\gamma-1) \rhom\thetam \operatorname{div} \psim+\rhom\psim \cdot \nabla \mr{\tilde{\theta}}+(\gamma-1)\rhom \zetam \operatorname{div} \mr{\tilde{u}} -\operatorname{div}\big(\kappa(\mr{\thetat})\nabla \mr{\zeta}\big)\\
	&-\D_0\left[\rho\zeta_{\tau}+\rho u \cdot \nabla \zeta+(\gamma-1) \rho\theta \operatorname{div} \psi+\rho\psi \cdot \nabla \tilde{\theta}+(\gamma-1)\rho \zeta \operatorname{div} \tilde{u} -{\operatorname{div}\big(\kappa(\thetat)\nabla \zeta\big)}
	\right]+\D_0\tilde{R}_4,
\end{align*}
{with 
 $\mr p:=p(\rhom,\thetam)=R\rhom\thetam$, $\mr{\tilde p}=R\mr\rhot\mr\Tt$ and $\tilde R$, $\tilde R_4$ given by $\eqref{R}-\eqref{R_4}$.}

{W}e rewrite \eqref{NS} as the following forms by the zero mode \eqref{sys-0mode},  
\begin{align}\label{eq0108}
\begin{cases}
&\rhom_\tau+\p_1(\rhom\um_{1})=-\p_1\D_0(\rhoa\ua_{1}),\\
&\rhom\um_\tau+\rhom\um_1\p_1\um+\p_1\mr p=\p_1\left[\mu(\thetam)\p_1\um+\big(\mu(\thetam)+\lambda(\thetam)\big)\p_1\um_1\mathbb{I}_1\right]-\grave{Q}_1,\\
&\rhom\thetam_\tau+\rhom\um_1\p_1\thetam+\mr p\p_1\um_1=\p_1\big(\kappa(\thetam)\p_1\thetam\big)+\big(\mu(\thetam)+\lambda(\thetam)\big)(\p_1\um_1)^2+\mu(\thetam)|\p_1\um|^2-\grave{Q_2},
\end{cases}
\end{align}
where 
\begin{align}\label{eq0109}
\begin{aligned}
\grave{Q}_1=&\D_0\big(\rhoa\ua_\tau+\rhoa\ua_1\p_1\ua+\rhoa\ua_1\p_1\um+\rhom\ua_1\p_1\ua+\rhoa\um_1\p_1\ua+\p_1\acute{p}\mathbb{I}_1\big)\\
&-O(1)\alpha(\alpha-1)\D_0\Big\{\mu_1\big[(\alpha-2)\thetam^{\alpha-3}\p_1\um\p_1\thetam+\thetam^{\alpha-2}\p_{11}\um\big]\thetaa^2\\
&+(\mu_1+\lambda_1)\left((\alpha-2)\thetam^{\alpha-3}\p_1\um_1\p_1\thetam+\thetam^{\alpha-2}\p_{11}\um_1\right)\thetaa^2\mathbb{I}\Big\},
\end{aligned}
\end{align}
\begin{align}\label{eq0110}
\begin{aligned}
&\grave{Q}_2=\D_0\big(\rhoa\thetaa_\tau+\rhoa\ua_1\p_1\thetaa+\rhoa\ua_1\p_1\thetam+\rhom\ua_1\p_1\thetaa+\rhoa\um_1\p_1\thetaa+R\rhom\thetaa\dv\ua+R\rhoa\thetam\dv\ua+R\rhoa\thetaa\dv\um\\
&+\acute{p}\dv\ua\big)-O(1)\D_0\Big\{\alpha(\alpha-1)\Big[(\alpha-2)\kappa_1\thetam^{\alpha-3}(\p_1\thetam)^2+\kappa_1\thetam^{\alpha-2}\p_{11}\thetam+(\mu_1+\lambda_1)\thetam^{\alpha-2}(\p_1\um_1)^2\\
&+\mu_1\thetam^{\alpha-2}|\p_1\um|^2\Big]\thetaa^2+2\alpha\kappa_1\thetam^{\alpha-1}(\p_1\thetaa)^2+2(\mu_1+\lambda_1)\thetam^\alpha(\p_1\ua_1)^2+2\mu_1\thetam^\alpha|\p_1\ua|^2\Big\},
\end{aligned}
\end{align}
and $\acute{p}:=p(\rhoa,\thetaa)=R\rhoa\thetaa.$
By \cref{sys-perturb,sys-0mode}, we obtain the non-zero mode as follows:
\begin{align}\label{eq-neq-per}
	\begin{cases}
		\phia_{\tau}+\um\cdot\nabla\phia+\rhom\dv \psia+\psia\cdot\nabla \mathring{\tilde{\rho}}+\phia\dv \mr{\tilde{u}}=\tilde{Q}_0, \\
		\rhom\psia_{\tau}+\rhom \um\cdot\nabla \psia+R\nabla(\rhom\zetaa+\mr{\Tt}\phia)+
		\rhom\psia \cdot \nabla \mr{\tilde{u}}-\frac{\phia}{\mr{\rhot}}\nabla\mr{\tilde{p}}=\dv\left[2\mu(\mr{\Tt})\mathbb{D}\psia+\lambda(\mr{\Tt})\dv\psia\mathbb{I}\right]+\tilde{Q}_{1},\\
		\rhom\zetaa_{\tau}+\rhom \um\cdot\nabla\zetaa+\mr{p}\dv \psia+\rhom\psia\cdot\nabla \mr{\tilde{\theta}}+(\gamma-1)\rhom\zetaa \dv \mr{\tilde{u}} =\dv\left[\kappa(\mr{\thetat})\nabla {\zetaa}\right]+\tilde{Q}_{2},
	\end{cases}
\end{align}
where 
\begin{align*}
	\tilde{Q}_0=&\left[\um\cdot \nabla \phia+\rhom \dv \psia+\psia \cdot \nabla \mathring{\tilde{\rho}}+\phia\dv \mr{\tilde{u}}\right]+\left[\um_1 \px \phim+\rhom \px \psim_1+\psim_1 \px \mathring{\tilde{\rho}}+\phim \px \mr{\tilde{u}}_1\right]\\
	&-\left[u \cdot \nabla \phi+\rho \operatorname{div} \psi+\psi \cdot \nabla \tilde{\rho}+\phi \operatorname{div} \tilde{u}\right]{-}e_0-\hat{E}_0,\\
	\tilde{Q}_1
	=&-\left[\rho\psi_{\tau}+\rho u \cdot \nabla \psi+\nabla(p-\tilde{p})+
	\rho\psi \cdot \nabla \tilde{u}-\frac{\phi}{\tilde{\rho}}\nabla\tilde{p}-\dv\left(2\mu({\Tt})\mathbb{D}\psi+\lambda(\Tt)\dv\psi\mathbb{I}\right)\right]+\tilde{R}-\hat{E}\\
	&+\left[\rhom\psia_{\tau}+\rhom\um\cdot \nabla \psia+R\nabla(\rhom\zetaa+\mr{\Tt}\phia)+
	\rhom\psia \cdot \nabla \mr{\tilde{u}}-\frac{\phia}{\mr{\rhot}}\nabla\mr{\tilde{p}}-\dv\left(2\mu(\mr{\Tt})\mathbb{D}\psia+\lambda(\mr{\Tt})\dv\psia\mathbb{I}\right)\right]\\
	&+\left[\rhom\psim_{\tau}+\rhom \um_1\px \psim+\px(\mr{p}-\mr{\tilde{p}})\mathbb{I}_1+
	\rhom\psim \cdot \nabla \mr{\tilde{u}}-\frac{\phim}{\mr{\rhot}}\nabla\mr{\tilde{p}}-\px\left(\mu(\mr{\Tt})\px\psim+({\lambda(\mr{\Tt})}
	+{\mu(\mr{\Tt})}
	)\px\psim_1\mathbb{I}_1\right)\right],\\
	\tilde{Q}_2
	=&-\left[\rho\zeta_{\tau}+\rho u \cdot \nabla \zeta+p \operatorname{div} \psi+\rho\psi \cdot \nabla \tilde{\theta}+(\gamma-1)\rho \zeta \operatorname{div} \tilde{u} -\dv\left(\kappa(\thetat)\nabla \zeta\right)\right]\\
	&+\left[\rhom\zetaa_{\tau}+\rhom \um\cdot \nabla \zetaa+\mr{p}\dv \psia+\rhom\psia\cdot \nabla \mr{\tilde{\theta}}+(\gamma-1)\rhom\zetaa \dv \mr{\tilde{u}}-\dv\left(\kappa(\mr{\thetat})\nabla {\zetaa}\right)\right]\\
	&+\left[\rhom\zetam_{\tau}+\rhom \um_1 \px \zetam+\mr{p}\px \psim_1+\rhom\psim_1 \px \mr{\tilde{\theta}}+(\gamma-1)\rhom\zetam \px \mr{\tilde{u}}_1 -\px\left(\kappa(\mr{\thetat})\px \mr{\zeta}\right)\right]+\tilde{R}_4-\hat{E}_4.
\end{align*}

From Lemma \ref{lem6}, we know that the a priori assumptions for zero mode {are} restricted by the original perturbation as follows:
	\begin{align}\label{eq09_3}
		\begin{aligned}
		\sup_{\tau\in[0,T]}\|(\phim,\psim)\|_{W^{1,\infty}(\Omega)}&\leq C\sup_{\tau\in[0,T]}\Big\{ \e^{-(b-1)}\|\nabla (\phim,\psim)\|_{H^1(\Omega)}^{\frac{1}{2}}\| (\phim,\psim)\|_{H^1(\Omega)}^{\frac{1}{2}}+\e^{-\frac{b-1}{2}}
		\|\nabla (\phim,\psim)\|_{H^1(\Omega)}\\
		&\qquad+\|\nabla^2 (\phim,\psim)\|_{H^1(\Omega)}^{\frac{3}{4}}\| (\phim,\psim)\|_{H^1(\Omega)}^{\frac{1}{4}}\Big\}
		\le \e^a,
		\end{aligned}
		\end{align}
		\begin{align}\label{eq09_4}
		\begin{aligned}
		\sup_{\tau\in[0,T]}\|\zetam\|_{W^{1,\infty}(\Omega)}&\leq C\sup_{\tau\in[0,T]}\Big\{ \e^{-(b-1)}\|\nabla \zetam\|_{H^1(\Omega)}^{\frac{1}{2}}\| \zetam\|_{H^1(\Omega)}^{\frac{1}{2}}+\e^{-\frac{b-1}{2}}
		\|\nabla \zetam\|_{H^1(\Omega)}\\
		&\qquad+\|\nabla^2 \zetam\|_{H^1(\Omega)}^{\frac{3}{4}}\|\zetam\|_{H^1(\Omega)}^{\frac{1}{4}}\Big\}
		\le \e^{a(\gamma-1)}.
		\end{aligned}
		\end{align}
		The analysis is performed under the a priori assumptions for the  non-zero mode:
		\begin{align}\label{eq09_5}
		\begin{aligned}
		\sup_{\tau\in[0,T]}\|(\phia,\psia,\zetaa)\|_{W^{1,\infty}(\Omega)}&\leq C\sup_{\tau\in[0,T]}\Big\{ \e^{-(b-1)}\|\nabla (\phia,\psia,\zetaa)\|_{H^1(\Omega)}^{\frac{1}{2}}\| (\phia,\psia,\zetaa)\|_{H^1(\Omega)}^{\frac{1}{2}}\\
		&\qquad+\e^{-\frac{b-1}{2}}
		\|\nabla (\phia,\psia,\zetaa)\|_{H^1(\Omega)}+\|\nabla^2 (\phia,\psia,\zetaa)\|_{H^1(\Omega)}^{\frac{3}{4}}\| (\phia,\psia,\zetaa)\|_{H^1(\Omega)}^{\frac{1}{4}}\Big\}\\
		&
		{\le \chi e^{-\e^{n_2}\tau}},
		\end{aligned}
		\end{align}
where $a$ is defined by \eqref{eqda}, $\chi:=O(1)\e^{\frac{1}{3}-\frac{\gamma a}{2}}$, 
  $[0,T]$ is the time interval in which the solution exists. 

Note that we need the third-order derivative regularity of solution $(\phi,\psi,\zeta)$ to deduce the exponential decay rate of $(\phia,\psia,\zetaa)$. In section \ref{sec3}, 
 we obtain the a priori estimates and local existence for the solution $(\phi,\psi,\zeta)$ to the problem \eqref{sys-perturb}-\eqref{R_4}
, and the third-order derivative estimate of solutions is similar to 
 \eqref{eqe01}-\eqref{eqe04} 
by similar proof steps. 
We claim that the Cauchy problem \eqref{sys-perturb}, \eqref{eqins1} admits a global-in-time solution $(\phi,\psi,\zeta)\in\mathcal B(0,+\infty)$ by a standard continuation argument. It remains to prove the time-asymptotic behaviors \eqref{eqes5} and \eqref{eqe7} to complete the proof of Theorem \ref{expde}. 
For $T=+\infty$, it yields that
\begin{align}
\begin{aligned}
&\sup_{\tau\ge 0}\|(\phi,\psi,\zeta)\|_{W^{1,\infty}(\Omega)}\le \sup_{\tau\ge 0}\|(\phi,\psi,\zeta)\|_{H^3(\Omega)}\le\e^a,\\
&\int_0^{+\infty}\|\nabla\phi\|^2_{H^2(\Omega)}d\tau\le\e^a, \ \ \ \int_0^{+\infty}\|(\nabla\psi,\nabla\zeta)\|^2_{H^3(\Omega)}d\tau\le\e^a.
\end{aligned}
\end{align}
Since $\|\nabla(\phi,\psi,\zeta)\|_{L^2(\Omega)}^2(\tau)\in W^{1,1}((0,+\infty))$ and $\|\nabla^2(\phi,\psi,\zeta)\|_{L^2(\Omega)}^2(\tau)\in W^{1,1}((0,+\infty))$, one has
\begin{align*}
& \|\nabla(\phi,\psi,\zeta)\|^2_{L^2(\Omega)}(\tau)\rightarrow 0,\ \ \text{as}\ \ \tau\rightarrow+\infty,\\
& \|\nabla^2(\phi,\psi,\zeta)\|^2_{L^2(\Omega)}(\tau)\rightarrow 0,\ \ \text{as}\ \ \tau\rightarrow+\infty,
\end{align*}
respectively.
Thus by G-N 
 inequality, \eqref{eqes5} holds. 

Then we show the exponential decay rate of $(\phia,\psia,\zetaa)$ to complete the proof of Theorem \ref{expde}. This is the reason why we need third-order regularity of the solution $(\phi,\psi,\zeta)$.
\begin{Lem}\label{non-zero} 
Under the assumption of Theorem \ref{mt1} and a priori assumption \eqref{eq09_3}-\eqref{eq09_5}
, there exists 
a positive constant $
\e_0>0$ such that $0<\e\le\e_0$, constants $n_2>1$ and $\eta\le\eta_0:=o(\e^{20})$, one can obtain 
\begin{equation}\label{eqe001}
		\begin{aligned}
		&\frac{d}{d\tau}\int_{\Omega}\left(\frac{\bar{\T}}{\bar\rho}\phia^2+\bar\rho|\acute{\psi}|^2+\frac{\bar\rho}{\bar\T}\zetaa^2\right)dx+ \int_{\Omega}(\bar\T^\alpha|\nabla\acute{\psi}|^2+\bar\T^{\alpha-1}|\nabla\acute{\zeta}|^2)dx
			\leq 
			O(\eta_0
			)e^{-\e^{n_2}\tau},
		\end{aligned}
	\end{equation}
	\begin{align}\label{eqe0123}
	\begin{aligned}
	\frac{d}{d\tau}\int_{\Omega}\left(\frac{\Tb^{2\alpha}}{\rhob^3}|\nabla\phia|^2+\left|
	\frac{\bar\T^\alpha\psia\nabla\phia}{\rhob}\right|\right)
	dy
	+
	\int_{\Omega}\frac{\bar{\T}^{\alpha+1}}{\rhob^2}|\nabla\phia|^2dy
	\leq O(\eta_0
	)
e^{-\e^{n_2}\tau},
\end{aligned}
\end{align}
\begin{align}\label{eqe0124}
	\begin{aligned}
	\frac{d}{d\tau}\int_\Omega\left(
	\rhob|\nabla  \acute{\psi} |^{2}+ \frac{\rhob}{\Tb} {|\nabla \acute{\zeta}|^{2}}\right)dy+\int_\Omega\left(
	\Tb^\alpha |
	\nabla^2   \acute{\psi} |^{2}+\Tb^{\alpha-1}|\Delta \acute{\zeta}|^{2}\right)dy\le 
	O(\eta_0
	)e^{-\e^{n_2}\tau},
	\end{aligned}
	\end{align}
\begin{align}\label{ss4phi}
\begin{aligned} 
\frac{d}{d\tau}\int_\Omega\frac{\bar\T^{2\alpha}}{\bar\rho^3}|\nabla^2\phia|^2+\frac{\bar\T^\alpha\nabla\psia\cdot\nabla^2\phia}{\bar\rho}
dy+\int_\Omega\frac{\bar\T^{\alpha+1}}{\bar\rho^2}|\nabla^2\phia|^2dy\le 
O(\eta_0
)e^{-\e^{n_2}\tau},
\end{aligned}
\end{align}
\begin{align}\label{eqes0124}
	\begin{aligned}
	\frac{d}{d\tau}\int_\Omega\left(
	\rhob|\nabla^2  \acute{\psi} |^{2}+ \frac{\rhob}{\Tb} {|\nabla^2 \acute{\zeta}|^{2}}\right)dy+\int_\Omega\left(\Tb^\alpha |\nabla^3  \acute{\psi} |^{2}
	+\Tb^{\alpha-1}|\nabla^3 \acute{\zeta}|^{2}\right)dy\le 
	O(\eta_0
	)e^{-\e^{n_2}\tau}.
	\end{aligned}
	\end{align}	
	\end{Lem}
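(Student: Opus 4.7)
The plan is to mimic the four layers of the energy estimate developed for the full perturbation in Lemma~\ref{ape1} (basic $L^2$, $\nabla\phi$-crossing estimate, first-order $(\nabla\psi,\nabla\zeta)$ estimate, and second-order estimates), but applied to the non-zero mode system \eqref{eq-neq-per} instead of \eqref{sys-perturb}. The crucial structural gain is that every occurrence of $(\phia,\psia,\zetaa)$ has vanishing mean in $x'\in\bar\Torus_\e^2$, so by the Poincar\'e inequality on the torus we have
\begin{equation*}
\|(\phia,\psia,\zetaa)\|_{L^2(\Omega)}\le C\,\e^{b-1}\|\nabla_{x'}(\phia,\psia,\zetaa)\|_{L^2(\Omega)},
\end{equation*}
and analogous bounds at every derivative level. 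Combined with the dissipative terms $\bar\theta^\alpha|\nabla\psia|^2+\bar\theta^{\alpha-1}|\nabla\zetaa|^2$ and $\frac{\bar\theta^{\alpha+1}}{\bar\rho^2}|\nabla\phia|^2$, this Poincar\'e gain converts each dissipation into a coercive $L^2$ control on $(\phia,\psia,\zetaa)$ itself, at the price of a power of $\e$. That is the mechanism that will ultimately yield a Gronwall inequality of the form $\tfrac{d}{d\tau}\mathcal{E}+\e^{n_2}\mathcal{E}\le O(\eta_0)e^{-\e^{n_2}\tau}$, from which the stated exponential rates follow.

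The execution order is as follows. First I would reproduce Step~1 of Lemma~\ref{ape1} on \eqref{eq-neq-per}: define the non-zero mode relative entropy $\hat{\mathbb E}^\flat:=R\mr{\thetat}\hat\Phi(\mr{\rhot}/\rhom+\tfrac{\phia}{\rhom}\text{-correction})+\tfrac{|\psia|^2}{2}+\mr{\thetat}\hat\Phi(\cdots)$ (carefully linearised around the zero mode profile $(\rhom,\um,\thetam)$ instead of $(\rhot,\ut,\Tt)$), multiply the three equations by the weights $\tfrac{R\mr\Tt}{\rhom}\phia$, $\psia$, $\tfrac{\zetaa}{\thetam}$ respectively, and integrate over $\Omega$. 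Second, repeat Steps~2--4: cross-estimate between $\psia$ and $\nabla\phia$ via $\frac{\Lambda(\mr\Tt)}{\rhom}\nabla\phia$ to get \eqref{eqe0123}; differentiate \eqref{eq-neq-per} and test with $\pii\psia$, $\tfrac{1}{\thetam}\pii\zetaa$, $\tfrac{R\mr\Tt}{\rhom}\pii\phia$ to get \eqref{eqe0124}; differentiate twice for \eqref{ss4phi} and \eqref{eqes0124}. At every step the Poincar\'e inequality furnishes the coercive lower bound
\begin{equation*}
\int_\Omega\bar\theta^\alpha|\nabla\psia|^2dy\ge c\,\e^{-2(b-1)}\bar\theta^\alpha\|\psia\|^2,\qquad \text{etc.,}
\end{equation*}
so I can fold an $\e^{n_2}\|(\phia,\psia,\zetaa)\|^2$ term into the energy inequality with $n_2>1$ determined by $b$, $\gamma$, $\alpha$ and the degeneracy $\nu$.

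The forcing terms $\tilde Q_0,\tilde Q_1,\tilde Q_2$ consist of three kinds of contributions: (i)~ansatz errors projected onto the non-zero mode, which by Lemma~\ref{er} and \eqref{eq094} are $O(\eta_0)e^{-\e^{n_2}\tau}$ in $W^{k,p}(\Omega_\e)$; (ii)~quadratic interactions $\Do(\rhoa\ua\cdot\nabla\psia)$ etc., which by Proposition~\ref{prop-decom} and the a priori decay $\|(\phia,\psia,\zetaa)\|_{W^{1,\infty}}\le\chi e^{-\e^{n_2}\tau}$ in \eqref{eq09_5} are bounded by $\chi^2 e^{-2\e^{n_2}\tau}$; and (iii)~mixed zero/non-zero terms such as $\phia\nabla\mr\ut$, which inherit the zero-mode smallness $\e^a$ from \eqref{eq09_3}--\eqref{eq09_4} and the $W^{2,\infty}$ bound $O(\eta_0)e^{-\e^{n_2}\tau}$ on $\rhot-\bar\rho$. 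After applying Cauchy--Schwarz with suitable $\bar\theta,\bar\rho$ weights and absorbing $\frac{1}{16}$-fractions of the dissipation, all three kinds collapse to $O(\eta_0)e^{-\e^{n_2}\tau}$ on the right-hand side.

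The main obstacle will be controlling the highest-derivative nonlinear terms in $\tilde Q_1,\tilde Q_2$ (products like $\nabla^2\phia\cdot\nabla\psim$ and $\nabla\phia\cdot\nabla^2\psia$) while keeping the coefficient of the dissipation small enough to be absorbed, since here we cannot afford the $\nu^{-k}$ losses that appeared in Lemma~\ref{ape1}. To handle this I would use the three-dimensional decomposition G--N inequality \eqref{G-N-type-2} to trade one factor of $\nabla^2$ for $\e^{b-1}\nabla^3$, apply the third-order a priori bound from Theorem~\ref{mt1} (which is why third-order regularity is essential, as noted just above the lemma), and exploit that the exponent $n_2>1$ in \eqref{eq094} and \eqref{est-h} is strictly larger than the exponent produced by Poincar\'e on the left-hand side, ensuring the final Gronwall inequality closes without a growing constant. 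Finally, multiplying each of \eqref{eqe001}--\eqref{eqes0124} by the appropriate integrating factor $e^{\e^{n_2}\tau}$ and integrating in $\tau$ yields the $H^2$-exponential decay of $(\phia,\psia,\zetaa)$, and \eqref{nonzero1} in Theorem~\ref{mt1} then follows from the three-dimensional G--N inequality \eqref{eq46i}.
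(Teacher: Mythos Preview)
Your plan is correct and follows essentially the same approach as the paper: reproduce the four-layer energy hierarchy of Lemma~\ref{ape1} on the non-zero mode system \eqref{eq-neq-per}, exploiting Poincar\'e on $\bar\Torus_\e^2$ to control undifferentiated $(\phia,\psia,\zetaa)$ terms by the dissipation, and handling the $\tilde Q_i$ forcing via the ansatz-error decay, the a priori bound \eqref{eq09_5}, and the zero-mode smallness \eqref{eq09_3}--\eqref{eq09_4}. Two minor departures from the paper's write-up: in Step~1 the paper does not build a nonlinear relative entropy $\hat{\mathbb E}^\flat$ but works directly with the quadratic multipliers $\tfrac{R\mr\Tt}{\rhom}\phia$, $\psia$, $\tfrac{\zetaa}{\thetam}$ (which you in fact also list), and the Poincar\'e--Gronwall closure you sketch in your final paragraph is carried out \emph{after} the lemma in the proof of Theorem~\ref{expde}, so within the lemma Poincar\'e is used only to bound the error terms, not yet to produce a coercive $\e^{n_2}\mathcal E$ on the left.
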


\begin{proof}
{\bf Step 1.} Multiplying \cref{eq-neq-per}$_1$ by $\frac{R{\mr{\T}}}{\rhom}\acute{\phi}$,  $(\ref{eq-neq-per})_2$ by ${\acute{\psi}}$,  $(\ref{eq-neq-per})_3$ by $\frac{\acute{\zeta}}{\Tm}$,  we get 
\begin{equation}\label{*1}
	\begin{aligned}
		\p_\tau(R&\frac{\mr{\T}}{2\rhom}\phia^2+\rhom\frac{|{\psia}|^2}{2}+\frac{\rhom}{2\Tm}\zetaa^2)+\frac{\mu(\mr{\Tt}) }{2}|\nabla\acute{\psi}+(\nabla\acute{\psi})^t|^2+{\lambda(\mr{\Tt})}(div\acute{\psi})^2+\frac{\kappa(\mr{\Tt})|\nabla\acute{\zeta}|^2}{\Tm}\\
		&:=H'+\dv(\cdots),
	\end{aligned}
\end{equation}
where
\begin{equation}
	\begin{aligned}	\label{eq000}
		H'=&\left[\tilde{Q}_{0} \frac{R{\Tm}}{\rhom}\acute{\phi}+\tilde{Q}_{1}\cdot\acute{\psi}+\tilde{Q}_{2}\frac{{\zetaa}}{\Tm}\right]\\
		+&\left[\frac{R}{2}\big(\p_\tau(\frac{\thetam}{\rhom})+\px(\frac{ \um_{1} {\Tm}}{\rhom})\big)\phia^2+\px{\D_0}(\rhoa\ua_{1})\frac{|\psia|^2}{2}+\big(\p_\tau(\frac{\rhom}{\Tm})+\px(\frac{\rhom\um_{1}}{\Tm})\big)\frac{\zetaa^2}{2}+\frac{\kappa(\mr{\Tt})}{\Tm^2}\zetaa\px\zetaa\px\Tm \right]\\
		-&\left(\psia\cdot\nabla \mathring{\tilde{\rho}}+\phia\dv \mr{\tilde{u}}\right)\frac{R\Tm}{\rhom}\phia-
		\left(\rhom\psia \cdot \nabla \mr{\tilde{u}}-\frac{\phia}{\mr{\rhot}}\nabla\mr{\tilde{p}}\right)\psia
		-\left(\rhom\psia\cdot\nabla \mr{\tilde{\theta}}+R\rhom\zetaa \dv \mr{\tilde{u}} \right)\frac{\zetaa}{\Tm}\\  
		:=&H'_1+H'_2+H'_3,
	\end{aligned}
\end{equation}
\begin{equation}
	\begin{aligned}	\label{eq010}
&\dv(\cdots)=\\
&\quad\dv\left[2\mu(\mr{\Tt})\mathbb{D}\psia\cdot\psia+\lambda(\mr{\Tt})\dv\psia\mathbb{I}\cdot\psia+\kappa(\mr{\thetat})\nabla {\zetaa}\frac{\zetaa}{\Tm}-R\frac{\mr{\T}\um}{2\rhom}\phia^2
-\rhom\um\frac{|{\psia}|^2}{2}-\frac{\rhom\um}{2\Tm}\zetaa^2-R\psia(\rhom\zetaa+\mr{\Tt}\phia)\right].
\end{aligned}
\end{equation}
We use the following truth,
	\begin{equation}
		\begin{aligned}
			\int_{\mathbb{T}^2}\acute{\phi} dx'=0,\ \int_{\mathbb{T}^2}\acute{\psi} dx'=0,\ \int_{\mathbb{T}^2}\acute{\zeta} dx'=0,
		\end{aligned}
	\end{equation}
	then the Poincar
	{\'e}'s inequality is valuable for $(\acute{\phi},\acute{\psi},\acute{\zeta})$, that is 
\begin{align}\label{eq0112}
\begin{aligned}
\int_{\Omega
}|(\phia,\psia,\zetaa)|^pdy\le \e^{(b-1)p}\int_{\Omega
}|\p_1(\phia,\psia,\zetaa)|^pdy,\ \ p\ge1, \ \  b\ge1.
\end{aligned}
\end{align}
Using the G-N 
 inequality and Poincar
{\'e}'s inequality, 
we obtain 
\begin{align}\label{eq0113}
\begin{aligned}
\|(\phia,\psia,\zetaa)\|_{L^4(\Omega
)}&\le C\e^{-\frac{1}{2}(b-1)}\|\nabla(\phia,\psia,\zetaa)\|^{\frac{1}{4}}\|(\phia,\psia,\zetaa)\|^{\frac{3}{4}}+C\e^{-\frac{1}{4}(b-1)}\|\nabla(\phia,\psia,\zetaa)\|^{\frac{1}{2}}\|(\phia,\psia,\zetaa)\|^{\frac{1}{2}}\\
&\qquad+C\|\nabla(\phia,\psia,\zetaa)\|^{\frac{3}{4}}\|(\phia,\psia,\zetaa)\|^{\frac{1}{4}}\le C\e^{\frac{1}{4}(b-1)}\|\nabla(\phia,\psia,\zetaa)\|.
\end{aligned}
\end{align}
	Then for $b\ge1$,	
	\begin{align}\label{q0}
	\begin{aligned}
	\int_\Omega\tilde Q_0\frac{\Tm}{\rhom}\phia dy&\le C\|\frac{\Tm}{\rhom}\|_\infty\Big[\|(\acute\ut,\acute\rhot,\nabla\acute\rhot,\dv\acute\ut)\|_\infty\|(\nabla\phi,\dv\psi,\psi,\phi)\|\|\phia\|+\|\psia\|_\infty\|\nabla\phia\|\|\phia\|\\
	&\qquad+\|\dv\psia\|_\infty\|\phia\|^2+\|\nabla\phim\|\|\psia\|_{L^4}\|\psia\|_{L^4}+\|\dv\psim\|\|\phia\|_{L^4}^2\Big]\\
	&\le O(\eta_0)e^{-\e^{n_2}\tau}+C\e^{\frac{1}{2}(b-1)+a}\nu^{-\alpha(\gamma-1)-1}\left\|(\sqrt{\frac{\Tb^{\alpha+1}}{\rhob^2}}\nabla\phia,\sqrt{\Tb^{\alpha-1}}\nabla\zetaa)\right\|^2.
	\end{aligned}
	\end{align}
	Estimations on $\tilde{Q}_i$, $i=1,2$, are similar to $\tilde{Q}_0$, we only present the estimate of the following term 
	for conciseness, 
\begin{align}\label{eq0120}
\begin{aligned}
&\int_\Omega(\tilde R-\D_0{\tilde R})\cdot\psia dy
\le C\|(\frac{\mr e_0\mr\ut}{\mr\rhot}, \frac{\rhom\mr e_0\mr\ut}{\mr\rhot},\frac{\rhom\mr\ut}{\mr\rhot},\frac{\rhom\mr e_0}{\mr\rhot})\|_{L^\infty}\|(\rhoa,\acute\rhot,\acute e_0,\acute\ut)\|\|\psia\|\\
&+\|(e_1-\px[
\bar\theta^\alpha\px\bar u_1],e_2,e_3, \acute\rhot^2,\px[
\bar\theta^\alpha\px\bar u_1],\frac{1}{\mr\rhot})\|_{L^\infty}\|(\acute\rhot,\phia,\frac{1}{\mr\rhot^2})\|\|\psia\|
+\|\mr\thetat^{\alpha-1}\nabla \um\|_{L^\infty}\|\zetaa\|\|\nabla\psia\|\\
&+\|(\mr\thetat^{\alpha-1}\nabla\mr\ut,\mr\thetat^\alpha)\|_{L^\infty}\|(\acute\thetat,\nabla\acute\ut)\|\|(\psia,\nabla\psia)\|+\e^{2(b-1)}\|\acute\thetat\|_{L^\infty}\|\mr\thetat^{\alpha-2}\nabla\um\|_{L^\infty}\|\zetaa\|\|\nabla\psia\|
\\&+\e^{2(b-1)}\|(\acute\rhot,\nabla\acute\rhot,\acute\thetat)\|_{L^\infty}^2\|(\mr\thetat^{\alpha-2}(\nabla(\frac{\phim}{\mr\rhot^3}),\frac{\phim}{\mr\rhot}),\mr\thetat^{\alpha-3}\zetam,\frac{\phim}{\mr\rhot^3}
)\|_{L^\infty}\|(\nabla\mr\ut,\nabla\psim)\|\|(\psia,\nabla\psia)\|\\
&+\|\frac{\nabla\mr\ut}{\mr\rhot}\|_{L^\infty}\|(\phia,\nabla\phia,\acute\rhot,\nabla\acute\rhot)\|\|(\psia,\nabla\psia)\|+\frac{1}{160}\int_\Omega\Big(\frac{\mu(\mr{\Tt}) }{2}|\nabla\acute{\psi}+(\nabla\acute{\psi})^t|^2+\lambda(\mr{\Tt})(div\acute{\psi})^2\Big)dy\\
&\le
 O(\eta_0)e^{-\e^{n_2}\tau}+
 C\e^{(b-1)-a
(2\gamma+1)
 Z+2a}\left\|(\sqrt{\frac{\Tb^{\alpha+1}}{\rhob^2}}\nabla\phia,\sqrt{\Tb^{\alpha-1}}\nabla\zetaa)\right\|^2+\frac{1}{160}\int_\Omega\mr{\Tt}^\alpha\nabla\psia^2dy,
\end{aligned}
\end{align}
where $\mr e_0=\mr\rhot_\tau+\dv(\mr\rhot\mr\ut),\acute e_0=\acute\rhot_\tau+\dv(\mr\rhot\acute\ut+\acute\rhot\mr\ut+\acute\rhot\acute\ut).$
	 The other terms in $H'_1$ can be estimated similarly to \eqref{q0}- \eqref{eq0120}, thus
	\begin{align}
		\begin{aligned}\label{H1}
			\int_{\Omega}|H'_1|dx\le  O(\eta_0)&e^{-\e^{n_2}\tau}+
 C\e^{2a-a
(2\gamma+1)
 Z}\left\|\sqrt{\frac{\Tb^{\alpha+1}}{\rhob^2}}\nabla\phia\right\|^2\\
 &+\frac{1}{160}\int_\Omega\Big(\Tb^\alpha|\nabla\psia|^2+\Tb^{\alpha-1}|\nabla\zetaa|^2\Big)dy.
		\end{aligned}
	\end{align}
One has 
\begin{equation}
	\begin{aligned}
			\int_{\Omega
			}|H'_2|dy
			&\leq C
			\left|\int_{\Omega
			}\frac{\Tm}{\rhom}\px\um\acute{\phi}^2+(\kappa({\Tm})\px\Tm)\px(\frac{\phia^2}{\rhom^2})+
			{\frac{\thetam^\alpha}{\rhom^{2}}(\px\um)^2
			\acute{\phi}^2}+\eta_0^2e^{-\e \tau}\psia^2dy\right|\\
			&\qquad+\left|\int_{\Omega
			}\frac{\rhom}{\Tm}\px\um\zetaa^2+(\kappa(\mr{\T})\px\Tm)\px(\frac{\zetaa^2}{\Tm^2})+\thetam^{-2}\big(\thetam^\alpha(\px\um)^2+\rhom\um_1\px\thetam
			\big)\zetaa^2dy\right|\\
	&\le C\nu^{-\frac{3\alpha}{2}(\gamma-1)}\e^{\frac{1}{2}b-\frac{1}{3}}
	\left\|(\sqrt{\frac{\Tb^{\alpha+1}}{\rhob^2}}\nabla\phia,\sqrt{\Tb^{\alpha-1}}\nabla\zetaa)\right\|^2,
 	\end{aligned}
\end{equation}
for small enough $\eta_0$ and $a$ given by 
\eqref{eqda}, 
 where some terms in $H'_2$ can be estimated as follows:
\begin{align*}
&
{
\int_{\Omega}\px\um(\rhob^{\gamma-2}
\phia^2,\rhob^{-\gamma+2}
\zetaa^2)dy
}\\
&\le C
\|(\bar\rho^{\gamma-2},\bar\rho^{-\gamma+2})\|_{L^\infty}\|\px\ub\|_{L^\infty}\|(\phia,\zetaa)\|^2+\|(\bar\rho^{\gamma-2},\bar\rho^{-\gamma+2})\|_{L^\infty}\|\px\psim\|\|(\phia,\zetaa)\|_{L^4}^2\\
&\leq
\nu^{-\frac{3\alpha}{2}(\gamma-1)}\e^{\frac{1}{2}(b-1)+a}\left\|(\sqrt{\frac{\Tb^{\alpha+1}}{\rhob^2}}\nabla\phia,\sqrt{\bar\theta^{\alpha-1}}\nabla\zetaa)\right\|^2,\\
&
{
\int_{\Omega}(\px\psim)^2(
{\bar\theta^\alpha}\rhob^{-2}\phia^2,
{\bar\theta^{\alpha-2}}
\zetaa^2)dy
}\le C
\|(\bar\theta^\alpha\rhob^{-2},\bar\theta^{\alpha-2})\|_{L^\infty}\|\px\psim\|_{L^\infty}\|\px\psim\|\|(\phia,\zetaa)\|_{L^4}^2\\
&\qquad\qquad\qquad\qquad\qquad\quad \ \leq C
\nu^{-(\frac{\alpha}{2}+1)(\gamma-1)}\e^{\frac{1}{2}(b-1)+2a}\left\|(\sqrt{\frac{\Tb^{\alpha+1}}{\rhob^2}}\nabla\phia,\sqrt{\bar\theta^{\alpha-1}}\nabla\zetaa)\right\|^2,\\
&\int_{\Omega}(\kappa({\Tm})\px\Tm)\px(\frac{\phia^2}{\rhom^2})dy
\leq C\left|\int_{\Omega}\Tb^{\alpha}(\px\zetam+\px\bar\theta)\big(\rhob^{-2}\phia\px\phia+\rhob^{-3}(\px\phim+\px\bar\rho)\phia^2\big)dy
\right|\\
	&\qquad\qquad\qquad\qquad\qquad \leq C\left(\e^{b-1+a(\gamma-1)}\nu^{-\gamma+1}
	+\e^{2b-2+a}\nu^{-\gamma}
	\right)\left\|\sqrt{\frac{\Tb^{\alpha+1}}{\rhob^2}}\nabla\phia\right\|^2.
\end{align*}
And $H'_3$ can be estimated similarly, we have
\begin{align*}
	\int_{\Omega}|H'_3|dy
	&=\int_{\Omega}\left|\left(\psia\cdot\nabla \mathring{\tilde{\rho}}+\phia\dv \mr{\tilde{u}}\right)\frac{R\Tm}{\rhom}\phia+
	\left(\rhom\psia \cdot \nabla \mr{\tilde{u}}-\frac{\phia}{\mr{\rhot}}\nabla\mr{\tilde{p}}\right)\psia+\left(\rhom\psia\cdot\nabla \mr{\tilde{\theta}}+R\rhom\zetaa \dv \mr{\tilde{u}} \right)\frac{\zetaa}{\Tm}\right|dy
	\\
	&\leq C\int_{\Omega}\left|\psia_{
	1}\phia\left(\frac{\Tm}{\rhom}\px\mr{\rhot}+\frac{\px\mr{\tilde{p}}}{\mr{\rhot}}\right)\right|+\left|\psia_{
	1}\zetaa\frac{\rhom\px\mr{\Tt}}{\Tm}\right|+\left|\frac{\Tm\px\mr{\tilde{u}}_{
	1}}{\rhom}\phia^2\right|+|\rhom\psia_1\px\mr{\tilde{u}}\cdot\psia|+\left|\frac{\rhom\px\mr{\ut}_{
	1}}{\Tm}\zetaa^2\right|dy
	\\
	 &\leq C\e^{2b-1{-}a}\nu^{
	 {-(\alpha+\frac{3}{2})(\gamma-1)}}
	  \int_{\Omega}\Tb^{\alpha}|\nabla\psia|^2+\Tb^{\alpha-1}|\nabla\zetaa|^2+\frac{\Tb^{\alpha+1}}{\rhob^2}|\nabla\phia|^2dy.
\end{align*}

	Finally, we have
	\begin{equation}
		\begin{aligned}
		\int_{\Omega}\p_\tau(R&\frac{\bar{\T}}{\bar\rho}\phia^2+\bar\rho\frac{|\acute{\psi}|^2}{2}+\frac{\bar\rho}{\bar\T}\zetaa^2)dy+ \int_{\Omega}(\bar\T^\alpha|\nabla\acute{\psi}|^2+\bar\T^{\alpha-1}|\nabla\acute{\zeta}|^2)dy\\
	&		\leq o(\e^{a(\gamma-1)Z})
			\left\|\sqrt{\frac{\Tb^{\alpha+1}}{\rhob^2}}\nabla\acute{\phi}\right\|^2+O(\eta_0)
			e^{-\e^{n_2}\tau}.
		\end{aligned}
	\end{equation}
	
	{\bf Step 2.} We still need to estimate $\|\nabla\acute{\phi}\|^2$.
	Taking 
	$\cref{eq-neq-per}_2\times\frac{\Lambda(\mr{\Tt})
	}{\rhom^2}\nabla\acute{\phi}+\nabla\left\{\cref{eq-neq-per}_1\right\}\times\frac{\Lambda^2(\mr\Tt)}{\rhom^3}\nabla\phia$, one has
	\begin{align}\label{eq-phiy101}
	\begin{aligned}
		&\p_\tau\left[\frac{\Lambda(\mr{\Tt})}{\rhom}\psia\cdot\nabla\phia+\frac{\Lambda^2(\mr\Tt)}{
		{2}\rhom^3}\left|\nabla\phia\right|^2\right]+\frac{R\Lambda(\mr{\Tt})\mr{\Tt}}{\rhom^2}|\nabla\phia|^2\\
		&=-\Big(\frac{\Lambda(\mr{\Tt})}{\rhom} \um\cdot\nabla \psia\cdot\nabla\phia+\frac{R\Lambda(\mr{\Tt})}{\rhom^2}\nabla(\rhom\zetaa)\cdot\nabla\phia+
		\frac{\Lambda(\mr{\Tt})}{\rhom} \psia \cdot \nabla \mr{\tilde{u}}\cdot\nabla\phia+\frac{\phia R\Lambda(\mr{\Tt})}{\rhom^2}\nabla\mr\Tt\cdot\nabla\phia\\
		&\qquad-\frac{\phia\Lambda(\mr{\Tt})}{\mr{\rhot}\rhom^2}\nabla\mr{\tilde{p}}\cdot\nabla\phia\Big)
		+\frac{\Lambda(\mr\Tt)}{\rhom^2}\tilde{Q}_1\cdot\nabla\phia+\psia\cdot\p_\tau\left[\frac{\Lambda(\mr{\Tt})}{\rhom}\nabla\phia\right]+\frac{\lambda'(\mr{\Tt})\Lambda(\mr\Tt)}{\rhom^2}\dv\psia\nabla\mr\Tt\cdot\mathbb{I}\cdot\nabla\phia\\
		&\qquad+\frac{2\Lambda(\mr\Tt)}{\rhom^2}\mu'(\mr\Tt)\nabla\mr\Tt\cdot\mathbb{D}\psia\cdot\nabla\phia
	{-\nabla\left(\frac{\mu(\mr{\Tt})\Lambda(\mr\Tt)}{\rhom^2}\right)\cdot\left(\nabla\psia\cdot\nabla\phia-\nabla\phia\cdot\nabla\psia\right)}
		\\
	&\qquad+\left[\p_\tau\left(\frac{\Lambda^2(\mr\Tt)}{\rhom^3}\right)+\dv\left(\frac{\Lambda^2(\mr\Tt)\um}{\rhom^3}\right)\right]\frac{|\nabla\phia|^2}{2}-\Big[
	\frac{\Lambda^2(\mr{\Tt})}{\rhom^3}\nabla\phia\cdot\nabla\rhom\dv\psia+
	{\frac{\Lambda^2(\mr{\Tt})\nabla\phia\cdot\nabla\um\cdot\nabla\phia}{\rhom^3}}\\
		&\qquad+\frac{\Lambda^2(\mr{\Tt})}{\rhom^3}\nabla\phia\cdot\nabla(\psia_1\px\mr{\rhot}+\phia\dv \mr{\tilde{u}}-\tilde{Q}_0)\Big]+\dv(\cdots),
	\end{aligned}
	\end{align}
where $\Lambda(\mr\Tt):=2\mu(\mr\Tt)+\lambda(\mr\Tt)$.
Integrating the above equation 
	 on $\Omega$, we have
\begin{align}\label{eqs29}
	\begin{aligned}
	&\frac{d}{d\tau}\int_{\Omega}\frac{\Lambda^2(\mr\Tt)}{
	{2}\rhom^3}|\nabla\phia|^2+
	\frac{\Lambda(\mr\Tt)\psia\cdot\nabla\phia}{\rhom
	}dy
	+
	\int_{\Omega}\frac{R\Lambda(\mr{\Tt})\mr{\Tt}}{\rhom^2}|\nabla\phia|^2dy
	\leq C
	\int_{\Omega}|J'|dy,
\end{aligned}
\end{align}
where
\begin{align*}
	&
	\int_{\Omega}|J'|dyd\tau\leq C
	\int_\Omega\left|\frac{\Lambda(\mr{\Tt})}{\rhom} \um+\frac{\lambda'(\mr{\Tt})\Lambda(\mr\Tt)}{\rhom^2}\px\mr{\Tt}+\mu(\mr{\Tt})\nabla\left(\frac{\Lambda(\mr\Tt)}{\rhom^2}\right)\right||\nabla\psia||\nabla\phia|\\
	&+\frac{\Lambda^2(\mr{\Tt})}{\rhom^3}\left|\px\rhom+\px \mr{\tilde{u}}+\frac{\rhom}{\Lambda(\Tt)}\mu'(\mr\Tt)\px\mr{\Tt}\right||\nabla\phia||\nabla\psia|+\left|\frac{\Lambda^2(\mr{\Tt})}{\rhom^3}\nabla\phia\cdot\nabla\tilde{Q}_0\right|+\left|\frac{\Lambda(\mr\Tt)}{\rhom^2}\tilde{Q}_1\cdot\nabla\phia\right|\nonumber\\
	&+\left|\frac{\Lambda(\mr{\Tt})}{\rhom^2}\nabla(\rhom\zetaa)\cdot\nabla\phia\right|+\left|\phia \px\mr{\Tt}+\frac{\phia}{\mr{\rhot}}\px\mr{\tilde{p}}+
	\rhom |\psia|\ \px \mr{\tilde{u}}\right|\frac{ \Lambda(\mr{\Tt})}{\rhom^2}|\nabla\phia|+\psia\cdot\pt\left[\frac{\Lambda(\mr{\Tt})}{\rhom}\nabla\phia\right]\\
	&+\left|\pt\left(\frac{\Lambda^2(\mr\Tt)}{\rhom^3}\right)+\dv\left(\frac{\Lambda^2(\mr\Tt)\um}{\rhom^3}\right)\right|\frac{|\nabla\phia|^2}{2}+\frac{\Lambda^2(\mr{\Tt})}{\rhom^3}|\nabla\phia|\left|(\psia_1\px^2\mr{\rhot}+\phia\px^2 \mr{\tilde{u}})\right|dy\\
	&:=\sum_{i=1}^{9}J'_i.
\end{align*}
Next, we will give some estimations of $J'_i$, $i=1,...,9$, 
\begin{align}
\begin{aligned}
	&J'_1+J'_2\leq C\nu^{-(\gamma-1)}
	\int_\Omega \Tb^{\alpha}|\nabla\psia|^2dy
	+\frac{1}{160}
	\int_\Omega\frac{\Tb^{\alpha+1}}{\rhob^2}|\nabla\phia|^2dy,\\
&J'_3+J'_4\le O(\eta_0)e^{-\e^{n_2}\tau}+\frac{1}{160}\int_{\Omega}\frac{\Tb^{\alpha+1}}{\rhob^2}|\nabla\phia|^2+{\Tb^{\alpha}}|\nabla\psia|^2
+\bar\theta^\alpha|\Delta\psia|^2dy,\\
&J'_7\le C\nu^{-1}\eta_0
^2
e^{-\e^{n_2}\tau}
+C\e^a\nu^{-{\frac{1}{2}}(\gamma-1)}
\int_{\Omega}\frac{\Tb^{\alpha+1}}{\rhob^2}|\nabla\phia|^2dyd\tau+C
\int_{\Omega}{\Tb^{\alpha}}|\nabla\psia|^2dyd\tau.
\end{aligned}
\end{align}
In addition, other terms are similar as \eqref{eqj1j2} 
 and \eqref{eqj0123}.
Therefore, it yields
\begin{align}\label{eq0123}
	\begin{aligned}
	\frac{d}{d\tau}&\int_{\Omega}\frac{\Lambda^2(\mr\Tt)}{
	{2}\rhom^3}|\nabla\phia|^2+
	\frac{\Lambda(\mr\Tt)\psia\nabla\phia}{\rhom^{\blue2}}dy
	+
	\int_{\Omega}\frac{R\Lambda(\mr{\Tt})\mr{\Tt}}{\rhom^2}|\nabla\phia|^2dy
	\\
	&\leq O(\eta_0
	)
e^{-\e^{n_2}\tau}+\frac{1}{160}\int_\Omega\bar\T^\alpha|\nabla^2\psia|^2dy.
\end{aligned}
\end{align}

	{\bf Step 3. }
	 Taking $\nabla\eqref{eq-neq-per}_1\times
	\frac{ R\mr{\Tt}}{{\rhom}}\nabla\acute{\phi}+\nabla
	\eqref{eq-neq-per} _{2} \times
	\nabla  \acute{\psi} +\nabla\eqref{eq-neq-per}_3\times
	\frac{1}{\Tm} \nabla\acute{\zeta}$, we have
	\begin{align}\label{equ0124}
	\begin{aligned}
	&\p_\tau\left[\frac{R\mr{\Tt}}{
	2\rhom}\left|\nabla\phia\right|^2+\rhom \frac{|\nabla  \acute{\psi} |^{2}}{2}+ \frac{\rhom}{\Tm} \frac{|\nabla \acute{\zeta}|^{2}}{2}\right]+\mu(\mr\Tt) |\Delta  \acute{\psi} |^{2}+(\mu(\mr\Tt)+\lambda(\mr\Tt)) |\nabla \dv   \acute{\psi} |^{2}+\frac{\kappa(\mr{\Tt})  }{\Tm}|\Delta \acute{\zeta}|^{2}\\
	&=\Big[\p_\tau\left(\frac{R\mr{\Tt}}{\rhom}\right)\frac{|\nabla\phia|^2}{2}+\p_{\tau}\rhom\frac{|\nabla \acute{\psi} |^2}{2}+\p_{\tau}\left(\frac{\rhom}{\Tm}\right)\frac{|\nabla \acute{\zeta}|^{2}}{2}\Big]+\Big[\nabla
	\rhom\cdot\nabla
	 \acute{\psi} \cdot  \acute{\psi} _{\tau}+
	   \frac{\nabla{\rhom}}{\Tm} \cdot \nabla \acute{\zeta} \acute{\zeta}_{\tau}\Big]
	\\
	&+\Big[\dv\left(\frac{R\mr{\Tt}}{\rhom}\um\right)\frac{|\nabla\phia|^2}{2}-\frac{R\mr{\Tt}}{\rhom}\nabla\phia\cdot\nabla\um\cdot\nabla\phia-\frac{R\mr{\Tt}}{\rhom}\nabla\phia\cdot\nabla\rhom\dv\psia
	+\dv(\rhom\um)\frac{|\nabla \acute{\psi} |^2}{2}\\
	&-\nabla
	(\rhom \um\cdot \nabla ) \acute{\psi} \cdot \nabla  \acute{\psi}
	{-}R(\nabla \mr{\Tt} \acute{\phi}+ \nabla \rhom \acute{\zeta}) \cdot \nabla \dv   \acute{\psi}
	{-}\left(2\nabla\mu(\mr{\Tt})\cdot\mathbb{D}\psia+\nabla\lambda(\mr\Tt)\dv\psia\cdot\mathbb{I}\right)\cdot\Delta
	\psia\\
	&+\dv\left(\frac{\rhom\um}{\Tm}\right)\frac{|\nabla \acute{\zeta}|^{2}}{2} - \frac{\rhom}{\Tm} \nabla\acute{\zeta} \cdot\nabla  \um\cdot \nabla \acute{\zeta}
	{-\frac{\nabla\rhom\cdot\um}{\thetam}|\nabla\zetaa|^2-}\frac{\Delta\zetaa}{\Tm}\nabla\kappa(\mr{\Tt})\cdot\nabla\zetaa
	{+\nabla\kappa(\mr{\Tt})\cdot\nabla\thetam\frac{|\nabla\zetaa|^2}{\thetam^2}}\\
	&+\kappa(\mr{\Tt})\frac{\nabla\zetaa\cdot\nabla\thetam\Delta\zetaa}{\Tm^2} 
		-\frac{\dv   \acute{\psi} }{\Tm} \nabla \mr{p} \cdot \nabla \acute{\zeta}
	-\frac{R\mr{\Tt}}{\rhom}\nabla\phia\cdot\nabla(\psia_1\px\mr{\rhot}+\phia\dv \mr{\tilde{u}})
	-\nabla \left(\rhom\psia\cdot\nabla \mr{\tilde{\theta}}+R\rhom\zetaa \dv \mr{\tilde{u}}\right)\Big]\\
	&+\Big(\frac{R\mr{\Tt}}{\rhom}\nabla\phia\cdot\nabla\tilde{Q}_0+\nabla\tilde{Q}_{
	1}\cdot\nabla \acute{\psi}+\nabla\tilde{Q}_4\cdot\frac{\nabla\zetaa}{\thetam}\Big)+\dv(\cdots):= T_{11}+T_{12}+T_{13}+T_{14}+\dv(\cdots).
		\end{aligned}
		\end{align}

	We 
	integrate the above equation 
	 on $\Omega$. 
	Note that we have achieved cancellations between those bad terms in \cref{equ0124}
	. Next, we estimate the 
	 terms on the right-hand side of \cref{equ0124}
	, which 
	were arranged by some characters for easy estimating. 
	
	First, by the Poincar
	{\'e}'s inequality \eqref{eq0112} and system \eqref{sys-0mode}, one has
	\begin{align}\label{eqt11}
		\int_{\Omega}T_{11}dy
		&\le C\nu^{-1}\eta_0^2
		e^{-\e^{n_2}\tau}+ 
		\frac{1}{160}\int_\Omega\Big(
		\bar\theta^\alpha|\nabla^2\psia|^2+\bar\theta^{\alpha-1}|\Delta
		\zetaa|^2\Big)dy.
	\end{align}
	From the equations \eqref{eq-neq-per}, we have
	\begin{align}
		\begin{aligned}
			\int_{\Omega}T_{12}dy
			& \le C\frac{\e^a}{\nu^{\gamma-1}}
			\left(\|\p_\tau( \acute{\psi} ,\acute{\zeta})\|^2+\|\nabla( \acute{\psi} ,\acute{\zeta})\|^2\right)\\
			&\leq C\eta_0^2
			\nu^{-1}e^{-\e^{n_2}\tau}+
			\frac{1}{160} \int_\Omega(\bar\theta^\alpha|\nabla^2
			\psia|^2+\bar\theta^{\alpha-1}|\Delta
			\zetaa|^2).
		\end{aligned}
	\end{align}
	Then it is easy to know that
	\begin{align}\label{eqt13}
		\int_{\Omega}T_{13}dy\leq C\eta_0^2
		\nu^{-1}e^{-\e^{n_2}\tau}+
		\frac{1}{160} \int_\Omega (\bar\theta^\alpha|\nabla^2
		\psia|^2+\bar\theta^{\alpha-1}|\Delta
		\zetaa|^2).
	\end{align}
	Estimates on the rest terms are similar to \cref{H1}, we have
	\begin{align}\label{eqt14}
	\int_\Omega T_{14}dy\le O(\eta_0)
	e^{-\e^{n_2}\tau}+
	\frac{1}{160} \int_\Omega(\bar\theta^\alpha|\nabla^2
	\psia|^2+\bar\theta^{\alpha-1}|\Delta
	\zetaa|^2).
	\end{align} 
Thus from {the} above inequalities in this step
, we get
\begin{align}\label{equ0125}
	\begin{aligned}
	\frac{d}{d\tau}\int_\Omega&\left[\frac{\bar{\T}}{\bar\rho}\left|\nabla\phia\right|^2+\bar\rho
	|\nabla  \acute{\psi} |^{2}
	+ \frac{\bar\rho}{\bar\T} 
	|\nabla \acute{\zeta}|^{2}
	\right]dy
	+\int_\Omega\bar\T^\alpha |\Delta  \acute{\psi} |^{2}
	+\Tb^\alpha |\nabla \dv   \acute{\psi} |^{2}+\bar\T^{\alpha-1}
	|\Delta \acute{\zeta}|^{2}dy\\
	&\le O(\eta_0)
	e^{-\e^{n_2}\tau}.
	\end{aligned}
	\end{align}
	
{\bf Step 4.} Now we estimate $\|\nabla^2\phia\|$. Taking $\p_{ij}\eqref{eq-neq-per}_1\times\frac{\Lambda^2(\mr\Tt)}{\rhom^3}\p_{ij}\phia+\p_i\eqref{eq-neq-per}_2\times\frac{\Lambda(\mr\Tt)}{\rhom^2}\p_{ij}\phia$ with $\p_{ij}:=\p_{y_iy_j}$, $i,j=1,2,3$, one has
\begin{align}
\begin{aligned}
\p_\tau&\left\{\frac{\Lambda^2(\mr\Tt)}{2\rhom^3}|\p_{ij}\phia|^2+\frac{\Lambda(\mr\Tt)\p_{ij}\phia\p_i\psia}{\rhom}\right\}+\frac{R\Lambda(\mr\Tt)\mr\Tt}{\rhom^2}|\p_{ij}\phia|^2\\
&=\Big[\left(\p_\tau\left\{\frac{\Lambda^2(\mr\Tt)}{2\rhom^3}\right\}+\dv\left\{\frac{\um\Lambda^2(\mr\Tt)}{2\rhom^3}\right\}\right)|\p_{ij}\phia|^2-\frac{\Lambda(\mr\Tt)\p_{ij}\phia}{\rhom^2}\Big(\p_i\left\{\rhom\psia\cdot\nabla\mr\ut-\frac{\phia}{\mr\rhot}\nabla\mr{\tilde p}\right\}+\p_i\phia\p_j\mr\Tt\\
&+\p_j\phia\p_i\mr\Tt+\phia\p_{ij}\mr\Tt\Big)-\frac{\Lambda^2(\mr\Tt)\p_{ij}\phia}{\rhom^3}\Big(\p_i\um\cdot\nabla\phia+\p_i\rhom\dv\psia+\p_i(\psia\cdot\nabla\mr\rhot+\phia\dv\mr\ut)\Big)\Big]\\
&-\Big[\frac{\Lambda(\mr\Tt)\p_{ij}\phia\um\cdot\nabla\p_i\psia_j}{\rhom}+\frac{\Lambda^2(\mr\Tt)\p_j\rhom}{\rhom^3}\p_{ij}\phia\dv\p_i\psia+\frac{\Lambda^2(\mr\Tt)\p_{ij}\phia\p_j\um\cdot\nabla\p_i\phia}{\rhom^3}\\
&+\p_i\left\{\frac{\Lambda(\mr\Tt)}{\rhom^2}\mu(\mr\Tt)\right\}\left(\p_{ij}\psia_j\p_{ij}\phia-\p_{ij}\phia\p_{ij}\psia_i\right)-\frac{\Lambda(\mr\Tt)\p_{ij}\phia}{\rhom^2}\Big(\dv\p_j\psia\cdot\mathbb{I}\cdot\nabla\lambda(\mr{\Tt})\\
&+2\nabla\mu(\mr{\Tt})\cdot\mathbb{D}\p_j\psia\Big)\Big]-\frac{\Lambda(\mr\Tt)\p_{ij}\phia\p_{ij}(\rhom\zetaa)}{\rhom^2}+\Big[\p_i\psia_j\p_{\tau}\left\{\frac{\Lambda(\mr\Tt)\p_{ij}\phia}{\rhom}\right\}+\frac{\Lambda^2(\mr\Tt)\p_{ij}\phia}{\rhom^3}\p_{ij}\tilde Q_0\\
&+\frac{\Lambda(\mr\Tt)\p_{ij}\phia}{\rhom^2}\p_i\tilde{Q}_1\Big]+\dv(\cdots).
\end{aligned}
\end{align}
Integrating above equation on $\Omega$, 
similar to \eqref{eqs29}, 
 one has
\begin{align}\label{s4phi}
\begin{aligned} 
\frac{d}{d\tau}&\int_\Omega\frac{\bar\T^{2\alpha}}{\bar\rho^3}|\nabla^2\phia|^2+\frac{\bar\T^\alpha\nabla\psia\cdot\nabla^2\phia}{\bar\rho}dy+\int_\Omega\frac{\bar\T^{\alpha+1}}{\bar\rho^2}|\nabla^2\phia|^2dy\\
&\le O(
\eta_0)e^{-\e^{n_2}\tau}+\frac{1}{160}\int_\Omega\bar\T^\alpha|\nabla^2\dv\psia|^2dy.
\end{aligned}
\end{align}
The proof of $
\|\nabla^3(\psia,\zetaa)\|$ is similar to that of steps 3-4 and omitted for brevity. 
Therefore, the proof of Lemma \ref{non-zero} is completed.
\end{proof}	

{\bf Proof of Theorem \ref{expde}.}
Let 
\begin{align*}
\begin{aligned}
&H(\tau):=\int_\Omega\frac{\bar\T}{\bar\rho}|\phia|^2+\bar\rho|\psia|^2+\frac{\bar\rho}{\bar\T}|\zetaa|^2+\bar\rho|\nabla\psia|^2+\frac{\rhob}{\Tb}|\nabla\zetaa|^2+\frac{\Tb^{2\alpha}}{\rhob^3}|\nabla\phia|^2
dy,\\
&G(\tau):=\int_\Omega\Tb|\nabla\psia|^2+\Tb^{\alpha-1}|\nabla\zetaa|^2+
\Tb^\alpha|\nabla^2
\psia|^2
+\Tb^{\alpha-1}|\nabla^2
\zetaa|^2+\frac{\Tb^{\alpha+1}}{\rhob^2}|\nabla\phia|^2+\frac{\Tb^{\alpha+1}}{\rhob^2}|\nabla^2\phia|^2dy.
\end{aligned}
\end{align*}
Then by Lemma \ref{non-zero}, it yields 
\begin{align}
{H'(\tau)+G(\tau)\le O(\eta_0
)e^{-\e^{n_2}\tau}}.
\end{align}
From 
\eqref{eq0112}, it easy to know 
\begin{align}
H(\tau)\le \nu^{-(\gamma+1)}G(\tau)+O(\eta_0
)e^{-\e^{n_2}\tau}.
\end{align}
It holds that
\begin{align}
H'(\tau)+\nu^{\gamma+1}H(\tau)\le O(\eta_0
)e^{-\e^{n_2}\tau}.
\end{align}
By making use of Gronwall's inequality, we can get 
\begin{align}\label{eqg1}
H(\tau)\le O(\e^{\frac{1}{3}}
)e^{-\e^{n_2}\tau}, \ \ \forall \ \tau\ge0.
\end{align}
Then one can use \eqref{eq46i} for $\Lambda=\e^{b-1}$ and \eqref{eqg1} to get 
\begin{align}
\|(\phia,\psia,\zetaa)\|_\infty\le O(\e^{\frac{1}{3}}
)e^{-\e^{n_2}\tau}.
\end{align}
Thus, \eqref{eqe7} is true and the proof of Theorem \ref{expde} is finished.

{\bf Proof of Theorem \ref{mt1}.} Once Theorem \ref{expde} is proved, by \eqref{eq094} and \eqref{eqes5}, it is straightforward to obtain 
\begin{align}
\begin{aligned}
\|(\rho,u,\T)(x,t)-(\rhob,\ub,\Tb)(\frac{x_1}{t})\|_{W^{1,\infty}(\R^3)}
&\le \|(\phi,\psi,\zeta)(y,\tau)\|_{W^{1,\infty}(\Omega)}+\|\rhot(x,t)-\rhob(\frac{x_1}{t})\|_{W^{1,\infty}(\Omega)}\\
&\ +\|\ut(x,t)-\ub(\frac{x_1}{t})\|_{W^{1,\infty}(\Omega)}+\|\Tt(x,t)-\Tb(\frac{x_1}{t})\|_{W^{1,\infty}(\Omega)}\\
&\le\|(\phi,\psi,\zeta)(y,\tau)\|_{W^{1,\infty}(\Omega)}+O(\eta_0
)e^{-\e^{n_2}\tau}.
\end{aligned}
\end{align}
Then it yields \eqref{time0} immediately.
In addition, it follows from \eqref{eq-ansatz} and Lemma \ref{lem-periodic-solution}, one has
\begin{align}
\begin{aligned}
&\|\acute{\rhot}\|_\infty\le\|\rhot-\rhob\|_\infty\le\|\rhot_+,\rhot_-\|_\infty\le \eta_0e^{-\e^{n_2}t},\\
&\|\acute{\mt}\|_\infty\le\|\mt-\mb\|_\infty\le\|\mt_+,\mt_-\|_\infty\le \eta_0e^{-\e^{n_2}t},\\
&\|\acute{\Et}\|_\infty\le\|\Et-\Eb\|_\infty\le\|\Et_+,\Et_-\|_\infty\le \eta_0e^{-\e^{n_2}t},
\end{aligned}
\end{align}
for $n_2>1$. Then we have
\begin{align}
\begin{aligned}
&\|\rhoa\|_\infty\le\|\acute\rhot\|_{\infty}+\|\phia\|_\infty\le O(\e^{\frac{1}{3}})e^{-\e^{n_2} t},\\
&\|\ua\|_\infty\le\left\|\frac{\acute\mt}{\acute\rhot}\right\|_\infty+\|\psia\|_\infty\le 
O(\e^{\frac{1}{3}})e^{-\e^{n_2}t},\\
&\|\Ta\|_\infty\le\left\|
\frac{\acute\Et}{\acute\rhot}\right\|_\infty+\left\|
\acute\ut
\right\|^2_\infty+\|\zetaa\|_\infty\le O(\e^{\frac{1}{3}})e^{-\e^{n_2}t}.
\end{aligned}
\end{align}
The proof of Theorem \ref{mt1} is completed.
\subsection{Appendix: Proof of Lemma \ref{lem-periodic-solution}}

\begin{proof}
Similar as \eqref{sys-perturb}, we study the perturbation of $(\rho,u,\T)(t,x)$ around $(\rho_-,u_-,\T_-)$ by 
\begin{align*}
\phi{(\tau,y)}:={\rho}(t, x)-{\rho}_{-}
,\quad
\psi{(\tau,y)}:=\uf(t, x)-{u}_{-}
,\quad
\xi{(\tau,y)}:=\theta(t, x)-{\theta}_{-}
,
\end{align*}
where $(\rho_-,u_-,\T_-)=(\nu, u_{1\nu},0,0, e^{\bar{S}}\nu^{\gamma-1})$ (refer to \eqref{nrws}, \eqref{eq091}).
Then the perturbation system is
\begin{align}\label{period-perturb}
\begin{cases}
\phi_{\tau}+u \cdot \nabla \phi+\rho \operatorname{div} \psi=0, \\
\rho\psi_{\tau}+\rho u \cdot \nabla \psi+\nabla(p-p_{-})=\dv\left(2\mu(\T)\mathbb{D}(\psi)+\lambda(\T)\dv\psi \mathbb{I}\right),\\
\rho\zeta_{\tau}+\rho u \cdot \nabla \zeta+p \operatorname{div} \psi =\dv(\kappa(\theta)\nabla \zeta)+\mu(\theta)\frac{\left|\nabla \psi+(\nabla \psi)^{t}\right|^{2}}{2}+\lambda(\theta)(\operatorname{div} \psi)^{2},
\end{cases}
\end{align}
with the initial data 
{
\begin{equation}
\begin{aligned}
\left\|(\phi_{0}, {\psi}_{0},\zeta_0)(y)\right\|_{H^{k+2}\left(\bar{\mathbb{T}}_{\e}^{3}\right)}^2 
\leq \eta_0, \ \ \ \bar\Torus_\e^3:=[0,\e^{b-1}]^3.
\end{aligned}
\end{equation}}
The energy estimates are based on the following a priori assumptions
{
\begin{align}\label{prior}
\begin{aligned}
	&\sup_{\tau\in[0,\tau_1(\varepsilon)]}\|(\phi,\psi)\|_{L^{\infty}(\bar{\mathbb{T}}_\e^{3})}\le\varepsilon^{a},\ \ \ \sup_{\tau\in[0,\tau_1(\varepsilon)]}\|\zeta\|_{L^{\infty}(\bar{\mathbb{T}}_\e^{3})}\le\varepsilon^{(\gamma-1)a},\\
	&\sup_{\tau\in[0,\tau_1(\varepsilon)]}\|(\nabla\phi,\nabla\psi,\nabla\zeta)\|_{H^{1}(\bar{\mathbb{T}}_\e^{3})}\leq 1.\ 
\end{aligned}
\end{align}}
From \eqref{prior}, we immediately get that
\begin{align*}
\frac{\rho_-}{2}<\rho<\frac{3\rho_-}{2},\frac{\T_-}{2}<\T<\frac{3\T_-}{2}.
\end{align*}

{\bf Step  1.} 
Multiplying $\eqref{period-perturb}_1$ by 
$R\theta_{-}(1-\frac{\rho_{-}}{\rho}),$ $\eqref{period-perturb}_2$ by $\psi$, $\eqref{period-perturb}_3$ by $\frac{\zeta}{\theta}$, we get
\begin{align}
\p_\tau(\rho{\mathbb{E}}_-)+\dv Q_-+\frac{\mu({\T})\T_-}{2\T}|\nabla\psi+(\nabla\psi)^t|^2+\frac{\lambda({\T})\T_-}{\T}|\dv\psi|^2+\frac{\kappa({\T}){\T_-}}{\T^2}|\nabla\zeta|^2=0,
\end{align}
where 
\begin{align}
\begin{aligned}
&{\mathbb{E}}_-:=R{{\theta}_-}\hat\Phi(\frac{{\rho_-}}{\rho})+\frac{|\psi|^2}{2}+ {\theta}_-\hat\Phi(\frac{\theta}{{\theta_-}}),\\
&Q_+=\rho u\mathbb{E}_-+(p- p_-)\psi-2\mu(\theta)\mathbb{D}(\psi)\cdot\psi-\lambda(\theta)\dv\psi\mathbb{I}\cdot\psi-\kappa(\theta)\nabla\zeta\frac{\zeta}{\theta}.
\end{aligned}
\end{align}
Integrating the above equation over $\bar{\mathbb{T}}_\e^{3},$ we have 
\begin{align}\label{be}
\begin{aligned}
\frac{d}{d\tau}\int_{\bar\Torus_\e^3}\big({\rho}_-^{\gamma-2}\phi^2+{\rho}_-\psi^2+{\rho}_-^{2-\gamma}\zeta^2\big)dy+C\int_{\bar\Torus_\e^3}\T_-^{\alpha}(|\nabla\psi|^2+\frac{|\nabla\zeta|^2}{\theta})dy\leq 0.
\end{aligned}
\end{align}


{\bf Step  2.} 
For $\|\nabla{\phi}\|^2$, multiplying $\eqref{period-perturb}_2\times\frac{\T_-^{\beta}}{\rho^2}\nabla{\phi}+\nabla\eqref{period-perturb}_1\times\frac{\Lambda(\T_-)\nabla\phi}{\rho^3}$, one has
\begin{align}\label{eq-phiy1}
\begin{aligned}
&\p_\tau\left[\frac{\T_-^{\beta}}{\rho}\psi\cdot\nabla\phi+\frac{\Lambda(\T_-)}{{2}\rho^3}\left|\nabla\phi\right|^2\right]+\frac{R\T_-^{\beta}{\T}}{\rho^2}|\nabla\phi|^2+\dv(\cdots)
=O(1)\bigg\{\frac{\T_-^{\beta}}{\rho}|(\nabla\psi,\nabla\zeta)||\nabla\phi|\\
&+\T_-^{\beta}|\nabla\psi|^2+\left|\frac{\T_-^{\alpha+\beta-1}}{\rho^2}\right||\nabla\psi||\nabla\phi||\nabla\zeta|+\left|\frac{\zeta\T_-^{\alpha+\beta-1}}{\rho^2}\right||\nabla^2\psi||\nabla\phi|+\frac{\Lambda({\T_-})}{\rho^3}|\nabla\phi|^2|\nabla\psi|\bigg\},
\end{aligned}
\end{align}
where $\Lambda(\T_-):=\theta_-^{\beta}(2\mu(\T_-)+\lambda(\T_-))$. Integrating \eqref{eq-phiy1} over $\bar{\mathbb{T}}_\e^{3}$ and using G-N {inequality} 
 \eqref{GN2} with $\Lambda=\varepsilon^{b-1}$, under assumptions \eqref{prior}, we have
\begin{align}\label{naphi}
\begin{aligned}
\frac{d}{d\tau}&\int_{\bar\Torus_\e^3}\frac{\Lambda(\T_-)}{2\rho^3}|\nabla\phi|^2+\frac{\T_-^{\beta}}{\rho}\psi\cdot\nabla\phi dy
+
\int_{\bar\Torus_\e^3}\frac{R\T_-^{\beta+1}}{\rho^2}|\nabla\phi|^2dy\\
&\leq 
\nu^{-12-4\gamma}\Lambda^{-6}\T_-^{-4\alpha}\T_-^{-3(\alpha+\beta)}|\ln\varepsilon|^3\bigg(\sup_{[0,\tau_1]}\|(\nabla\phi,\nabla\psi,\nabla\zeta)\|_{H^1}^2+1\bigg)\|(\theta_-^{\frac{\alpha-1}{2}}\nabla\zeta,\theta_-^{\frac{\alpha}{2}}\nabla\psi)\|^{2}\\
&\qquad+\T_-^{\alpha+\beta}|\ln\varepsilon|^{-1}\|(\T_-^{\frac{\alpha}{2}}\nabla^2\psi,\T_-^{\frac{\alpha}{2}}\nabla^2\zeta)\|^2.
\end{aligned}
\end{align}
Also, we can get the estimation for $\|(\nabla\psi,\nabla{\zeta})\|^2$.
Taking $\eqref{period-perturb}_{2} \times  -\frac{\triangle{\psi}}{\rho}$, we have
\begin{align}\label{eq0124}
\begin{aligned}
\p_\tau&\left[\frac{|\nabla{\psi}|^{2}}{2}\right]+\frac{\mu(\T)}{\rho} |\Delta  {\psi} |^{2}+(\mu(\T)+\lambda(\T)) \frac{|\nabla \dv {\psi} |^{2}}{\rho}+\dv(\cdots)\\
&=O(1)\bigg\{|\nabla\psi|^3+\left|(\frac{{\T_-}}{\rho}\nabla{\phi}, \nabla{\zeta})\right||\Delta{\psi}|
+\theta_-^{\alpha-1}|\nabla\zeta||\nabla\psi|\frac{|\nabla^2\psi|}{\rho}+\theta_-^{\alpha}|\nabla\psi||\nabla\phi|\frac{|\nabla^2\psi|}{\rho^2}\bigg\}.
\end{aligned}
\end{align}
Taking $\eqref{period-perturb}_{3} \times  -\frac{\triangle{\zeta}}{\rho}$, we have
\begin{align}\label{eq0125}
\begin{aligned}
\p_\tau&\left[ \frac{|\nabla {\zeta}|^{2}}{2}\right]+\frac{\kappa({\T})  }{\rho}|\Delta{\zeta}|^{2}+\dv(\cdots)\\
&=O(1)\bigg\{|\nabla\psi||\nabla\zeta|^2+\theta_-|\nabla\psi||\Delta\zeta|+\theta_-^{\alpha-1}|\nabla\zeta|^2\frac{|\Delta\zeta|}{\rho}+\theta_-^{\alpha}|\nabla\psi|^2\frac{|\Delta\zeta|}{\rho}\bigg\}.
\end{aligned}
\end{align}
Combining \eqref{eq0124}-\eqref{eq0125} and integrating the resulted 
 over $\bar{\mathbb{T}}_\e^{3}
 $, we have
\begin{align}\label{eq0131}
\begin{aligned}
\frac{d}{d\tau}&\int_{\bar{\mathbb{T}}_\e^{3}}\frac{|\nabla\psi|^2}{2}+\frac{|\nabla\zeta|^2}{2}dy+\int_{\bar{\mathbb{T}}_\e^{3}}\frac{\T_-^{\alpha}  }{\rho}|\Delta{\zeta}|^{2}+\frac{\T_-^{\alpha}  }{\rho}|\nabla^2{\psi}|^{2}dy\\
&\leq  C\nu^{1-2\gamma}\Lambda^{-3}\theta_-^{-\alpha}
\bigg(\sup_{[0,\tau_1]}\|(\nabla\psi,\nabla\zeta)\|_{H^1}^2+1\bigg)\|(\theta_-^{\frac{\alpha-1}{2}}\nabla\zeta,\theta_-^{\frac{\alpha}{2}}\nabla\psi)\|^{2}\\
&\qquad+C\nu^{-12}\Lambda^{-6}\T_-^{-4\alpha}(\sup_{[0,\tau_1]}\|\nabla\phi\|_{H^1}^2+1)\|\T_-^{\frac{\alpha}{2}}\nabla\psi\|^2+\T_-^{-(\alpha+\beta)}\left\|\sqrt{\frac{\T_-^{\beta+1}}{\rho^2}}\nabla\phi\right\|^2.
\end{aligned}
\end{align}
Inserting \eqref{eq0131} into \eqref{naphi}, set $a_1=\max\{(6\gamma-2+(\gamma-1)(3\alpha+\beta))a,(12+4\gamma+(7\alpha+3\beta)(\gamma-1))a+6(b-1)\},$ and making use of \eqref{be},
\begin{equation}\label{naph}
\begin{aligned}
&\frac{d}{d\tau}\int_{\bar{\mathbb{T}}_\e^{3}}\frac{\Lambda(\T_-)}{2\rho^3}|\nabla\phi|^2dy
+\int_{\bar{\mathbb{T}}_\e^{3}}\frac{R\T_-^{\beta+1}}{\rho^2}|\nabla\phi|^2dy\\
\leq & C\varepsilon^{-a_1}\bigg(\sup_{[0,\tau_1]}\|(\nabla\phi,\nabla\psi,\nabla\zeta)\|_{H^1}^2+1\bigg)\|(\theta_-^{\frac{\alpha-1}{2}}\nabla\zeta,\theta_-^{\frac{\alpha}{2}}\nabla\psi)\|^2.
\end{aligned}
\end{equation}
And \eqref{eq0131} turns to 
\begin{equation}\label{eq0132}
\begin{aligned}
&\frac{d}{d\tau}\int_{\bar{\mathbb{T}}_\e^{3}}\frac{|\nabla\psi|^2}{2}+\frac{|\nabla\zeta|^2}{2}dy+\int_{\bar{\mathbb{T}}_\e^{3}}\frac{\T_-^{\alpha}  }{\rho}|\Delta{\zeta}|^{2}+\frac{\T_-^{\alpha}  }{\rho}|\nabla^2{\psi}|^{2}dy\\
\leq &C\varepsilon^{-a_2}\bigg(\sup_{[0,\tau_1]}\|(\nabla\phi,\nabla\psi,\nabla\zeta)\|_{H^1}^2+1\bigg)\|(\theta_-^{\frac{\alpha-1}{2}}\nabla\zeta,\theta_-^{\frac{\alpha}{2}}\nabla\psi)\|^2,
\end{aligned}
\end{equation}
where $a_2=a_1+(\alpha+\beta)a$.

{\bf Step  3.} 
 We turn to estimate $\|\nabla^2\phi\|^2.$ Similar as \eqref{eq-phiy1},
\begin{equation}\label{eq0139}
\begin{aligned}
\p_\tau&\left[\frac{\T_-^{\beta}}{\rho}\nabla\psi\cdot\nabla^2\phi+\frac{\Lambda(\T_-)}{{2}\rho^3}\left|\nabla^2\phi\right|^2\right]+\frac{R\T_-^{\beta}{\T}}{\rho^2}|\nabla^2\phi|^2+\dv(\cdots)\\
=&\frac{\T_-^{\beta}}{\rho}|(|\nabla\psi|^2,|\nabla^2\psi|,|\nabla^2\zeta|,\frac{|\nabla\zeta||\nabla\phi|}{\rho},\frac{\T}{\rho^2}|\nabla\phi|^2)||\nabla^2\phi|+\frac{\Lambda(\T_-)}{\rho^3}|\nabla^2\phi|^2|\nabla\psi|\\
&+\frac{\Lambda(\T_-)}{\rho^3}|\nabla\phi||\nabla^2\phi||\nabla^2\psi|+\frac{\T_-^{\beta}}{\rho^2}|\nabla^2\phi||\nabla\psi||\nabla\phi|+\T_-^{\beta}|\nabla^2\psi|^2+\frac{\T_-^{{\beta}}}{\rho}|\nabla^2\psi||\nabla\psi||\nabla\phi|\\
&+\frac{\T_-^{\beta}}{\rho^2}|\nabla\phi|^2|\nabla\psi|^2+\left|\frac{\T_-^{\alpha+\beta-1}}{\rho^2}\right||\nabla^2\psi||\nabla^2\phi||\nabla\zeta|+\left|\frac{\zeta\T_-^{\alpha+\beta-1}}{\rho^2}\right||(\nabla^2\dv\psi,\nabla\triangle\psi)||\nabla^2\phi|\\
&+\left|\frac{\T_-^{\alpha+\beta-1}}{\rho^3}\right||\nabla\psi||\nabla\phi||\nabla^2\phi||\nabla\zeta|+\left|\frac{\T_-^{\alpha+\beta-1}}{\rho^2}\right||\nabla^2\phi||\nabla^2\zeta||\nabla\psi|+\left|\frac{\T_-^{\alpha+\beta-2}}{\rho^2}\right||\nabla^2\phi||\nabla\zeta|^2|\nabla\psi|.
\end{aligned}
\end{equation}
Integrating \eqref{eq0139} over $\bar{\mathbb{T}}_\e^{3}
$, we have
\begin{equation}\label{n2phi}
\begin{aligned}
&\frac{d}{d\tau}\int_{\bar{\mathbb{T}}_\e^{3}}\frac{\Lambda(\T_-)}{2\rho^3}|\nabla^2\phi|^2+\frac{\T_-^{\beta}}{\rho}\nabla\psi\cdot\nabla^2\phi dy+\int_{\bar{\mathbb{T}}_\e^{3}}\frac{R\T_-^{\beta+1}}{\rho^2}|\nabla^2\phi|^2dy\\
\leq &\T_-^{(\alpha+\beta)}|\ln\varepsilon|^{-1}\|\T_-^{\frac{\alpha}{2}}\nabla^3\psi\|^2+C\nu^{-a_3}\bigg(\sup_{[0,\tau_1]}\|(\nabla\phi,\nabla\psi,\nabla\zeta)\|_{H^1}^2+1\bigg)\|(\theta_-^{\frac{\alpha-1}{2}}\nabla\zeta,\theta_-^{\frac{\alpha}{2}}\nabla\psi)\|^2\\
&+C\nu^{-a_4}\bigg(\sup_{[0,\tau_1]}\|(\nabla\phi,\nabla\psi,\nabla\zeta)\|_{H^1}^2+1\bigg)\|\T_-^{\frac{\beta+1}{2}}\nabla\phi\|^2\\
&+\nu^{-a_5}\bigg(\sup_{[0,\tau_1]}\|(\nabla\phi,\nabla\psi,\nabla\zeta)\|_{H^1}^2+1\bigg)\|(\theta_-^{\frac{\alpha-1}{2}}\nabla^2\zeta,\theta_-^{\frac{\alpha}{2}}\nabla^2\psi)\|^2.
\end{aligned}
\end{equation}

{We turn to estimate $\|(\nabla^2\psi,\nabla^2{\zeta})\|^2.$}
Similar as \eqref{eq0124}, we could get
\begin{align}\label{eq0140}
\begin{aligned}
&\p_\tau\left[ \frac{|\nabla^2 {\psi}|^{2}}{2}\right]+\frac{\mu({\T})  }{\rho}|\nabla\Delta{\psi}|^{2}+\frac{\mu({\T})+\lambda({\T})}{\rho}|\nabla^2\dv{\psi}|^{2}+\dv(\cdots)\\
=&O(1)\bigg\{\frac{|\nabla\zeta||\nabla\phi||\nabla^3\psi|}{\rho}+\frac{\T|\nabla\phi|^2}{\rho^2}|\nabla^3\psi|+\frac{\T}{\rho}|\nabla^2\phi||\nabla^3\psi|+|\nabla^2\zeta||\nabla^3\psi|\\
&+|\nabla\psi||\nabla^2\psi|^2+
|\nabla\psi|^2|\nabla^3\psi|+\frac{\theta^{\alpha-1}|\nabla\zeta||\nabla^2\psi||\nabla^3\psi|}{\rho}+\frac{\theta^{\alpha-2}|\nabla\zeta|^2|\nabla\psi||\nabla^3\psi|}{\rho}\\
&+\frac{\T^{\alpha-1}|\nabla^2\zeta||\nabla\psi||\nabla^3\psi|}{\rho}+\frac{|\nabla\phi|}{\rho^2}\big\{\T^{\alpha}|\nabla^2\psi||\nabla^3\psi|+\T^{\alpha-1}|\nabla\zeta||\nabla\psi||\nabla^3\psi|\big\}\bigg\}.
\end{aligned}
\end{align}
Similar as \eqref{eq0125}, we have 
\begin{align}\label{eq0141}
\begin{aligned}
&\p_\tau\left[ \frac{|\nabla^2 {\zeta}|^{2}}{2}\right]+\frac{\kappa({\T})  }{\rho}|\nabla\Delta{\zeta}|^{2}+\dv(\cdots)\\
=&O(1)\bigg\{|\nabla\zeta||\nabla\psi||\nabla\triangle\zeta|+\T|\nabla^2\psi||\nabla\Delta\zeta|+|\nabla\psi||\nabla^2\zeta|^2++\theta^{\alpha-1}\frac{|\nabla\zeta||\nabla^2\zeta||\nabla\triangle\zeta|}{\rho}
\\
&+\T^{\alpha-2}\frac{|\nabla\zeta|^3|\nabla\triangle\zeta|}{\rho}+\T^{\alpha-1}\frac{|\nabla\phi|}{\rho^2}|\nabla\zeta|^2|\nabla\triangle\zeta|+\T^{\alpha}\frac{|\nabla\phi|}{\rho^2}|\triangle\zeta||\nabla\triangle\zeta|\\
&+\T^{\alpha-1}\frac{|\nabla\zeta|}{\rho}|\nabla\psi|^2|\nabla\triangle\zeta|+\frac{\T^{\alpha}|\nabla\phi|}{\rho^2}|\nabla\psi|^2|\nabla\triangle\zeta |+\frac{\T^{\alpha}}{\rho}|\nabla\psi||\nabla^2\psi||\nabla\triangle\zeta|\bigg\}.
\end{aligned}
\end{align}
Integrating \eqref{eq0140}-\eqref{eq0141} over $\mathbb{T}_\e^{3}
$, we have
\begin{align}\label{n2pz}
\begin{aligned}
&\frac{d}{d\tau}\int_{\bar{\mathbb{T}}_\e^{3}}\frac{|\nabla^2\psi|^2}{2}+\frac{|\nabla^2\zeta|^2}{2} dy+C\int_{\bar{\mathbb{T}}_\e^{3}}\frac{\T_-^{\alpha}  }{\rho}|\nabla^3{\psi}|^{2}+\frac{\T_-^{\alpha}  }{\rho}|\nabla\Delta{\zeta}|^{2}dy\\
\leq&C\e^{-a_6}\bigg(\sup_{[0,\tau_1]}\|(\nabla\phi,\nabla\psi,\nabla\zeta)\|_{H^1}^2+1\bigg)\|(\theta_-^{\frac{\alpha-1}{2}}\nabla\zeta,\theta_-^{\frac{\alpha}{2}}\nabla\psi)\|^2\\
&+\e^{-a_7}\bigg(\sup_{[0,\tau_1]}\|(\nabla\phi,\nabla\psi,\nabla\zeta)\|_{H^1}^2+1\bigg)\|(\theta_-^{\frac{\alpha-1}{2}}\nabla^2\zeta,\theta_-^{\frac{\alpha}{2}}\nabla^2\psi)\|^2\\
&+\T_-^{-(\alpha+\beta)}\left\|\sqrt{\frac{\T^{\beta+1}}{\rho^2}}\nabla^2\phi\right\|^2.
\end{aligned}
\end{align}
Choosing $\beta=\alpha,$ under assumptions \eqref{prior}, we make use of \eqref{be},\eqref{naph},\eqref{eq0132}, \eqref{n2phi},\eqref{n2pz} to obtain
\begin{equation}
\begin{aligned}
&\frac{d}{d\tau}\int_{\bar\Torus_\e^3}\big({\rho}_-^{\gamma-2}\phi^2+{\rho}_-\psi^2+{\rho}_-^{2-\gamma}\zeta^2\big)dy+\int_{\bar\Torus_\e^3}\T_-^{\alpha}(|\nabla\psi|^2+\frac{|\nabla\zeta|^2}{\theta})dy
+\frac{d}{d\tau}\e^{a_1}\int_{\bar\Torus_\e^3}\frac{\T_-^{2\alpha}}{2\rho^3}|\nabla\phi|^2\\
&+\T_-^{2\alpha}\bigg(|\nabla\psi|^2+|\nabla\zeta|^2\bigg)dy+\e^{a_1}\bigg(\int_{\bar{\mathbb{T}}_\e^{3}}\frac{R\T_-^{\alpha+1}}{\rho^2}|\nabla\phi|^2dy+\T_-^{2\alpha}\int_{\bar\Torus_\e^3}\frac{\T_-^{\alpha}  }{\rho}|\Delta{\zeta}|^{2}+\frac{\T_-^{\alpha}  }{\rho}|\nabla^2{\psi}|^{2}dy\bigg)\\
&+\frac{d}{d\tau}\e^{a_2}\int_{\bar{\mathbb{T}}_\e^{3}}\frac{\T_-^{2\alpha}}{2\rho^3}|\nabla^2\phi|^2+\T_-^{2\alpha}\bigg(|\nabla^2\psi|^2+|\nabla^2\zeta|^2\bigg)dy+\e^{a_2}\bigg(\int_{\bar{\mathbb{T}}_\e^{3}}\frac{R\T_-^{\alpha+1}}{\rho^2}|\nabla^2\phi|^2dy\\
&+\T_-^{2\alpha}\int_{\bar{\mathbb{T}}_\e^{3}}\frac{\T_-^{\alpha}  }{\rho}|\nabla^3{\psi}|^{2}+\frac{\T_-^{\alpha}  }{\rho}|\nabla\Delta{\zeta}|^{2}dy\bigg)\leq 0,
\end{aligned}
\end{equation}
{where $a_1,a_2$ 
 are positive constants which depending on $\alpha,\beta,a,b,\gamma$.}

The Poincar
{\'e}'s inequality gives that 
\begin{equation}
\begin{aligned}
\|(\nabla^{k}\phi,\nabla^{k}W,\nabla^{k}Z)\|_{L^2(\bar{\mathbb{T}}_\e^{3})}\leq C\Lambda \|(\nabla^{k+1}\phi,\nabla^{k+1}W,\nabla^{k+1} Z)\|_{L^2(\bar{\mathbb{T}}_\e^{3})},\ \ \Lambda=\varepsilon^{b-1},\ \  k=0,1.
\end{aligned}
\end{equation}
Note that $$\|W\|_{L^2}\leq \|(\phi,\rho\psi)\|_{L^2},\|Z\|_{L^2}\leq \|(\phi,\rho\psi,\rho\zeta)\|_{L^2}.$$ 
 Set $E_1(\tau)=\|(\rho_-^{\frac{\gamma-2}{2}}\phi,{\rho}_-^{\frac{1}{2}}\psi,{\rho}_-^{\frac{2-\gamma}{2}}\zeta)(\tau)\|^2$ and choosing $n_0$ suitably large such that 
\begin{equation}
\begin{aligned}
&\frac{d}{d\tau}E_1(\tau)+\Lambda^{-1}\bigg\{\frac{\e^{a_1}\T_-^{\alpha+1}}{\rho_-^2}\|(\phi,W,Z)(\tau)\|^2\bigg\}\leq 0,\\
 &\frac{d}{d\tau}\|(\phi,W,Z)(\tau)\|^2+\rho_-^{\gamma}\Lambda^{-1}\frac{\e^{a_1}\T_-^{\alpha+1}}{\rho_-^2}\|(\phi,W,Z)(\tau)\|^2\leq 0,\\
 &\Rightarrow \|(\phi,W,Z)(\tau)\|_{L^2(\Omega)}
 \leq \|(\phi,W,Z)(0)\|_{L^2(\Omega)}e^{-\varepsilon^{n_0}\tau},\ \ \e^{n_0}\leq \rho_-^{\gamma-2}\Lambda^{-1}\e^{a_1}\T_-^{\alpha+1}.
\end{aligned}
\end{equation}
Similarly, we can also do the higher energy estimations for $\|\nabla^{j}\phi,\nabla^{j}\psi,\nabla^{j}\zeta\|^2$ for $j\geq 3,$ and we could get 
\begin{align}
\|(\phi,W,Z)(\tau)\|_{H^2(\Omega)}\leq \|(\phi,W,Z)(0)\|_{H^2(\Omega)} e^{-\varepsilon^{n_0}\tau}.
\end{align}
By using G-N inequality again, we have proved \eqref{eq096} for $k=0.$ The case $k\geq 1$ can be obtained similarly. 
\end{proof}

\

\textbf{Acknowledgements} \ 
The authors are grateful for the many useful discussions and suggestions of Professor Feimin Huang. The last author would like to express his gratitude to Ms. Yuchen Lu for her kind assistance and inspiration.

\

\textbf{Funding} \ 
 The work is supported by the NSFC (11831003, 12171111, 12371215) of China and the SFC (KZ202110005011) of Beijing.

\ 

\textbf{Competing interests} \ 
The authors do not have any other competing interests to declare.

\

\textbf{Data availability} \ 
This article does not have any external supporting data.

\

\textbf{Publisher's Note} \ 
Springer Nature remains neutral with regard to jurisdictional claims in published maps and institutional affiliations.

		\vspace{1.5cm}
		
\end{document}